\newcommand{\R}{\mathbb{R}}
\newcommand{\norm}[1]{\Vert #1 \Vert}
\newtheorem{theorem}{Theorem}[section]
\newtheorem{proposition}[theorem]{Proposition}
\newtheorem{lemma}[theorem]{Lemma}
\newtheorem{corollary}[theorem]{Corollary}
\newtheorem{remark}[theorem]{Remark}
\newtheorem{example}{Example}
\title{Color Bregman TV}
\author{Michael Moeller, Eva-Maria Brinkmann, Martin Burger, Tamara Seybold}
\begin{document}
\maketitle
\begin{abstract}
In this paper we present a novel iterative procedure for multichannel image and data reconstruction using Bregman distances. The motivation for our approach is that in many application multiple channels share a common subgradient with respect to a suitable regularization. This implies desirable properties such as a common edge set (and a common direction of the normals to the level lines) in the case of the total variation (TV). Therefore, we propose to determine each iterate by regularizing each channel with a weighted linear combination of Bregman distances to all other image channels from the previous iteration. In this sense we generalize the Bregman iteration proposed by Osher et al. in \cite{osh-bur-gol-xu-yin} to multichannel images. We prove the convergence of the proposed scheme, analyze stationary points and present numerical experiments on color image denoising, which show the superior behavior of our approach in comparison to TV, TV with Bregman iterations on each channel separately, and vectorial TV. Further numerical experiments include image deblurring and image inpainting. Additionally, we propose to use the infimal convolution of Bregman distances to different channels from the previous iteration to obtain the independence of the sign and hence the independence of the direction of the edge. While this work focuses on TV regularization, the proposed scheme can potentially improve any variational multichannel reconstruction method with a one-homogeneous regularization.
\end{abstract}

\vspace{0.3cm}

\noindent \textbf{Keywords:} Bregman iteration, total variation, multichannel image reconstruction, color image reconstruction, $\ell^1$ regularization, inverse scale space methods

\vspace{0.3cm}

\noindent \textbf{AMS:} 47A52, 65J22, 68U10, 49M30
	
\vspace{0.2cm}
 
\section{Introduction}
\label{sec:Introduction}
Variational methods for image restoration have become one of the dominant approaches due to their flexibility and wide range of applicability. A majority of image processing and reconstruction tasks have been modeled as energy minimization problems in which the reconstructed image $\hat{u}$ is determined by 
\begin{align}
\hat{u} = \arg \min_u H_f(u) + J(u), \label{model0}
\end{align} 
for a data fidelity term $H_f(u)$ measuring the closeness to the data $f$ and a regularization term $J(u)$ imposing some kind of smoothness or prior information (cf. \cite{Aub06,tvzoo1,chanshen,2009-FAIR}). Classical choices for the fidelity term are squared norms between $f$ and some operator applied to the image $u$ or log-likelihoods derived from different noise models (cf. \cite{SBMB09, Aub08, Le07,Saw13}). The most frequently used regularization terms are total variation (cf. \cite{ROF,tvzoo1}), variants thereof (cf. e.g. \cite{Bre10,Gil09}), or other $\ell^1$-type regularizations yielding sparse reconstructions in certain dictionaries (cf. \cite{Daubechies20031,kutyniok2012shearlets,starckcandes}). Further improvements in variational image restoration have been made by considering Bregman iterations related to \eqref{model0}, iteratively improving the reconstruction via
\begin{align}
u^{k+1} = \arg \min_u H_f(u) + J(u) - J(u^k) - \langle p^k, u - u^k \rangle, \label{Bregman}
\end{align} 
where $p^k$ is in the subdifferential of $J$ at $u^k$, $p^k \in \partial J(u^k)$. Here the regularization term is a generalized Bregman distance between $u$ and $u^k$. Starting with an initial value such that $p^0= 0 \in \partial J(u^0)$, \eqref{model0} is exactly the first step of the Bregman iteration. In the further step the regularization is replaced from $J$, i.e. the Bregman distance to $u^0 =0$, to the Bregman distance to the last image. A regularizing effect in case of noisy data can be obtained by appropriately terminating the iterative procedure (cf. \cite{osh-bur-gol-xu-yin}). Both experimental (cf. \cite{osh-bur-gol-xu-yin,benningbrunemueller,primaldualbregman,marquina,zhang}) as well as analytical results (cf. \cite{osh-bur-gol-xu-yin, bur07, benningbrunemueller, benningBurger, gilboa, Xu07, moel12}) confirm the superior properties of Bregman iterations, in particular the systematic error compared to \eqref{model0} is reduced.

The majority of the investigations with methods mentioned above was related to gray-scale images, in particular Bregman iterations have been investigated exclusively to that case (with the trivial exception of separately applying Bregman iterations to different color channels).
In the case of color - or, more generally, multichannel - image restoration, one crucial question is what kind of relation between the different image channels can be assumed. Even among methods considering the total variation (TV) as a regularization functional, many different variants have been proposed, e.g. Vectorial TV \cite{Blo96, Sap96, Cre10} or TV in different color spaces, e.g. chrominance and luminance based TV in \cite{Con12}. While these regularizations work well in many applications, they share the property of coupling the intensities of different color channels. Our novel idea is to use Bregman distance regularization in an iterative setting to make use of the fact that the edge sets of different image channels should coincide. 

Consider an underdetermined multichannel image restoration problem, which requires the solution of 
\begin{equation}
	K_i u_i = f_i
\end{equation}
for $M$ different channels $i \in \{1, \ldots ,M\}$ with measurements $f_i$ and bounded linear operators $K_i$. Our motivation is to look for a solution $u=(u_1, \ldots ,u_M)$ with reasonably small total variation in each channel and additionally require the channels $u_i$ to share a common edge set. This way, no assumption has to be made on the sizes of the jumps across the edges in each channel. As we will see in the course of this paper, a common edge set is strongly correlated with a common subgradient of the total variation. Hence, in its strictest form, one could consider minimizing 
\begin{equation}
	\|K_i u_i - f_i\|^2
\end{equation}
under the condition that there exists a $p \in \partial J(u_i)$ for all $i=1,\ldots,M$, i.e. that the channels share a common subgradient and consequently the edge set in the case $J=TV$. Because the constraint of a common subgradient is difficult to impose directly, we will use the Bregman distances between different color channels in an iterative procedure to measure the  differences between edge sets of different channels.  More precisely, we shall compute updates from using an average over Bregman distances to different channels,  i.e.
\begin{equation}
	u_i^{k+1} \in \text{arg}\min_{u_i \in {\cal U}} \left( \frac{\lambda}{2} \Vert K_i u_i  -f_i \Vert^2 + \sum_{j=1}^M w_{i,j} (J(u_i) - \langle p_j^k, u_i \rangle) \right)
	\label{eq:colorBregman}
\end{equation}
with positive weights $w_{i,j}$,  $p_j^k \in \partial J(u_j^k)$, and a data fidelity weight $\lambda$. Clearly this makes sense in particular for one-homogeneous $J$, and we will focus on the particular case of $J=TV$.
Note that for a weight matrix being identical we obtain the channel-wise Bregman iteration. In this sense, our approach generalizes and extends the work of Osher et al. \cite{osh-bur-gol-xu-yin} for color image restoration.

While a common subgradient of different image channels forces the edges to - loosely speaking - `go into the same direction', e.g. in 1d either all be a `step up' or all be a `step down', we also consider the case of different channels sharing an edge set but not (necessarily) the edge direction. This will be particularly useful for image channels coming from different image modalities, e.g. in combined medical imaging devices such as SPECT/CT or PET/MR scanners. Mathematically, we use the infimal convolution of Bregman distances to achieve the independence of the edge direction. 

Besides TV, collaborative sparse reconstruction using $\ell^1$ regularization is another possible application and motivation for our approach. In case multiple sparse quantities have to be reconstructed from multiple measurements and one can expect the solutions to have the same sign, one can use the Bregman distances between the quantities as a penalty, thus encouraging a common subgradient. The infimal convolution of Bregman distances allows to relax this constraint. The variables are encouraged to have the same support, but do not necessarily have to have the same sign. 

The idea of using edge information from different channels or images has been considered in literature before, see for instance in \cite{Tsch03, ehr12, Bal03, Pra2013} and the references therein, but to the best knowledge of the authors no work has been done in the context of jointly minimizing the Bregman distances and hence encouraging common subgradients of different images.

The remainder of the paper is organized as follows: In section \ref{sec:nutshell} we illustrate our general idea using the example of discrete one-dimensional  TV denoising. We continue in section \ref{sec:bregpriors} by discussing Bregman distance priors with a particular focus on the relation between the TV Bregman distance and common edge sets as well as the $\ell^1$ Bregman distance and a common support. Our novel idea for iterative color image restoration using Bregman distances is presented in section \ref{sec:colBreg} along with an analysis of stationary points, the primal-dual and dual formulation, as well as the proof of the well-definedness of the proposed iteration. Section \ref{sec:convergenceanalysis} analyzes the convergence of the proposed scheme. The numerical implementation as well as several color image reconstruction results are presented in section \ref{sec:numerics}. 
Finally, we draw conclusions and outline possible extensions of the proposed scheme in section \ref{sec:conclusions}.

Concerning notation, we shall use the following setting uniformly throughout the paper
\begin{itemize}

\item A subscript $i$ will denote the scalar image at channel $i$, $u=(u_1,\ldots,u_M)$ is the complete multichannel image.

\item A superscript $k$ will always denote the iteration number, i.e. $u^k$ is the $k$-th Bregman iteration, $u_i^k$ its $i$-th channel.

\item The argument will always denote the spatial location, either continuously as $u(x)$ or referring to discrete pixel / voxel number $u(j)$, which will be clear from the context. 
\end{itemize}

\section{The Idea in a Nutshell: Discrete 1d TV}
\label{sec:nutshell}
As a first motivation, let us consider discrete total variation regularization in 1d, which is easy to analyze and clearly motivates our approach. Although the discussion in this section is limited to the case of denoising, we would like to point out that the entire idea easily generalizes to any data fidelity term and can therefore be applied to a wide variety of image reconstruction problems.

 Let $f_i \in \R^N$ be measured signals (later: channels of an image) with $i \in \{ 1, ..., M \}$. Let $D$ denote the finite difference matrix for approximating the gradient. The discrete 1d total variation is defined as $ J(u) = \|Du \|_1 $. The corresponding subdifferential can be characterized as
\begin{equation} \partial J(u) = \{p~|~ p = D^T q, ~ q \in \text{sign}(Du) \} .
\end{equation}
In other words, the $j$-th component of $q$ is given as  
\begin{align*}
q(j) \left \{ \begin{array}{lc} =1& \text{if } (Du)(j) >0, \\
								=-1& \text{if } (Du)(j) < 0, \\
								\in [-1, 1 ] & \text{if } (Du)(j) = 0.
\end{array} \right.
\end{align*}
Now consider the regular Bregman iteration on each signal $f_i$ separately. In the $(k+1)$-th iteration one solves
\begin{align*}
u_i^{k+1} 
&= \arg \min_u \left( \frac{\lambda}{2} \| u_i - f_i\|^2 + \|Du_i\|_1 - \langle q_i^{k}, Du_i \rangle \right), \\
&= \arg \min_u \left( \frac{\lambda}{2} \| u_i - f_i\|^2 + \sum_j \left(\text{sign}(Du_i(j))-q^{k}_i(j) \right) Du_i(j) \right),
\end{align*}
for $D^T q_i^k \in \partial J(u_i^k)$. We can see that the Bregman iteration basically weights the penalization of each component of $Du_i$, or, in other words, weights the penalization of the approximated derivative. If the $j$-th component of $u_i^{k}$ had a `jump up', i.e. $Du_i^{k}(j)>0$, i.e. $q_i^{k}(j)=1$, then a `jump up' of $u_i^{k+1}$ at the $j$-th component is not penalized anymore and the size of the jump is determined exclusively by the minimization of $\| u_i - f_i\|^2 $. Note that this property has been studied in literature and it has been shown theoretically as well as experimentally that Bregman iteration restores the contrast of the reconstructed image and often leads to higher quality results than plain TV minimization. 

In the above Bregman iteration the $q^{k}_i(j)$ could be interpreted as the likelihood of the true image having an edge in the corresponding direction at this component. The higher the value of $q^{k}_i(j)$, the smaller is the penalty for an edge with $Du^{k+1}_i(j)>0$ or, in other words, the more likely is an edge in this direction.  Notice that once $q^{k}_i(j)=1$ or $q^{k}_i(j)=-1$ the magnitude of $Du^{k+1}_i(j)$ does not matter anymore.

 Viewing the $q^{k}_i(j)$ as likelihoods of the true image having an edge, motivates our idea for color Bregman iteration: If we have a signal (e.g. an image) with multiple channels, we consider it to be likely that all channels share the position of the true edges as well as their direction. If $q^{k}_i(j) >0$ for all channels $i$ we want to reduce the penalty on all $u_i^{k+1}$ for having a `jump up' at the $j$-th component. On the other hand, if some $q^{k}_i(j) >0$ and some $q^{k}_l(j) <0$ this behavior is likely due to noise and one should not change the penalization of the corresponding component. We therefore propose to compute the `likelihood' of an edge in each color channel as a linear combination of all $q^{k}_i$, i.e. computing
 \begin{align}
u^{k+1} &= \arg \min_u \left( \frac{\lambda}{2} \| u - f\|^2 + \|Du\|_1 - \langle \tilde{q}_i^{k}, Du \rangle \right),
\end{align}
with $\tilde{q}_i^{k} = \sum_l w_{i,l} q^{k}_l$ and nonnegative weights $w_{i,l}$. We will demonstrate in this paper that this kind of color Bregman iteration has significant advantages over the channel by channel reconstruction of color images. 

As a first example, consider the simple, one-dimensional case shown in figure \ref{fig:1dex}: The first row shows the clean data in the form of three one-dimensional channels shown in red, green and blue, as well as a noisy version of the signal. Note that the heavy noise in comparison to the small signal in the blue channel makes the reconstruction of the blue signal without additional information basically impossible. This claim is underlined by looking at the variable $q$ after the first iteration: The variable $q$ reaching an absolute value of 1 indicates an edge at the corresponding position. More precisely, $q(i)=1$ means a step up and $q(i)=-1$ means a step down at position $i$. 
\begin{figure}[H]
\begin{center}
\begin{minipage}[t]{0.48\textwidth}
\begin{center}
\includegraphics[width=0.666\textwidth, natwidth=560,natheight=420]{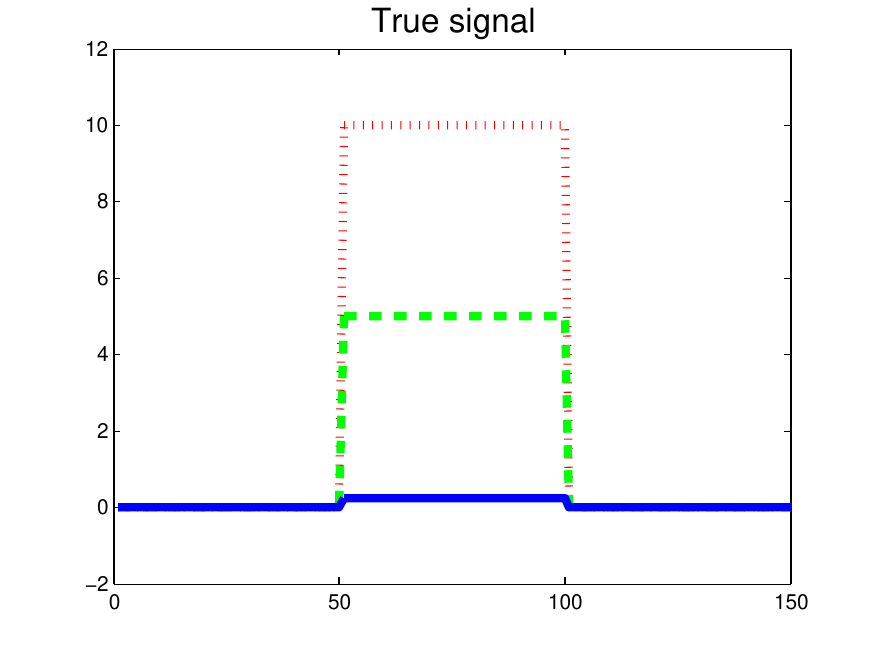}
\end{center}
\end{minipage}
\begin{minipage}[t]{0.48\textwidth}
\begin{center}
\includegraphics[width=0.666\textwidth, natwidth=560,natheight=420]{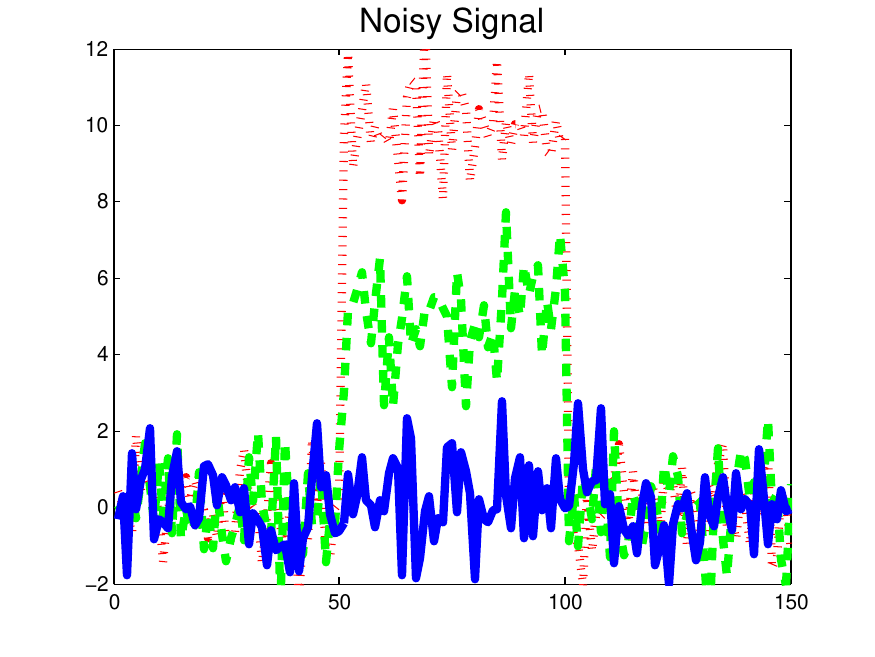}
\end{center}
\end{minipage}
\\
\begin{minipage}[t]{0.48\textwidth}
\vspace{-0.5cm}
\begin{center} Clean signal (three channels)
\end{center}
\end{minipage}
\begin{minipage}[t]{0.48\textwidth}
\vspace{-0.5cm}
\begin{center} Noisy signal (three channels)
\end{center}
\end{minipage}
\\

\vspace{0.3cm}

\begin{minipage}[t]{0.32\textwidth}
\begin{center} \includegraphics[width=1\textwidth, natwidth=560,natheight=420]{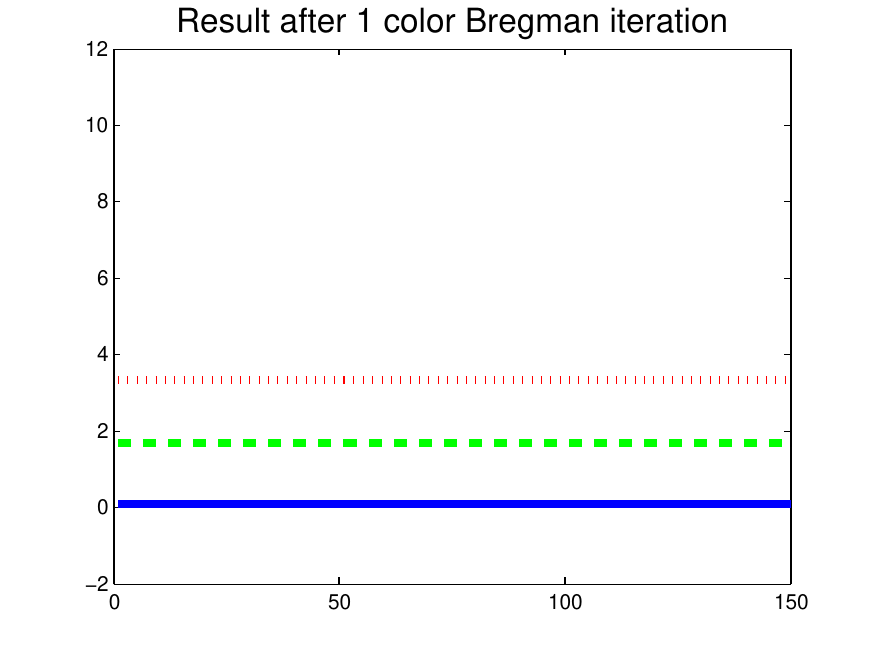}
\end{center}
\end{minipage}
\begin{minipage}[t]{0.32\textwidth}
\begin{center} \includegraphics[width=1\textwidth, natwidth=560,natheight=420]{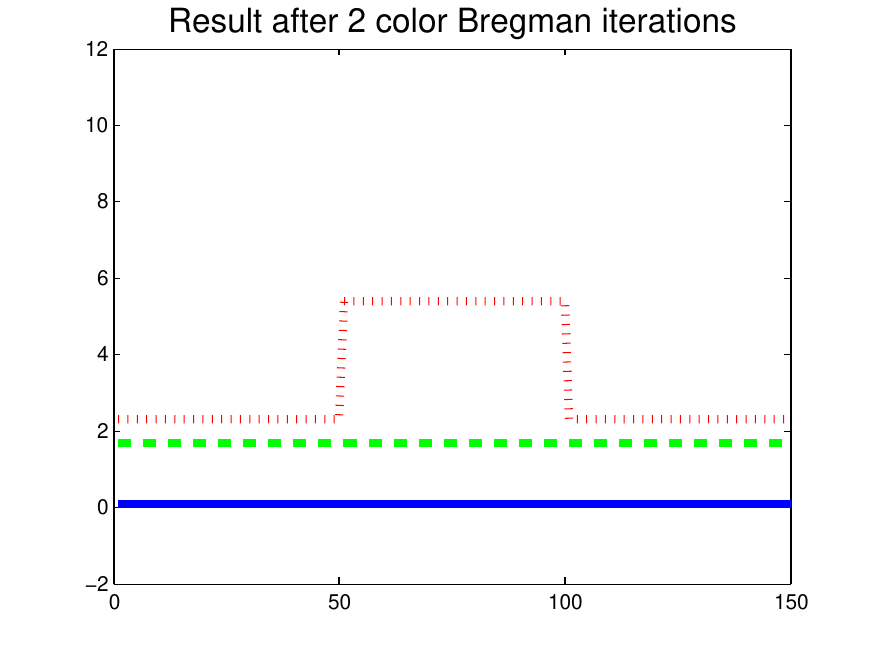}
\end{center}
\end{minipage}
\begin{minipage}[t]{0.32\textwidth}
\begin{center} \includegraphics[width=1\textwidth, natwidth=560,natheight=420]{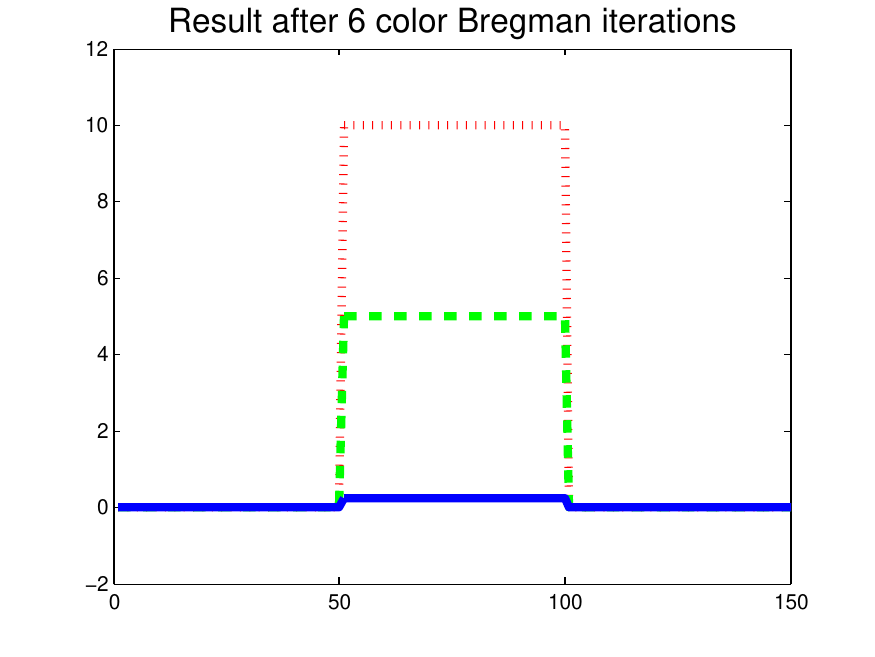}
\end{center}
\end{minipage}
\begin{minipage}[t]{0.32\textwidth}
\vspace{-0.5cm}
\begin{center}  Reconstruction after 1 iteration
\end{center}
\end{minipage}
\begin{minipage}[t]{0.32\textwidth}
\vspace{-0.5cm}
\begin{center} Reconstruction after 2 iterations
\end{center}
\end{minipage}
\begin{minipage}[t]{0.32\textwidth}
\vspace{-0.5cm}
\begin{center} Reconstruction after 6 iterations
\end{center}
\end{minipage}
\\

\vspace{0.3cm}

\begin{minipage}[t]{0.32\textwidth}
\begin{center} \includegraphics[width=1\textwidth, natwidth=560,natheight=420]{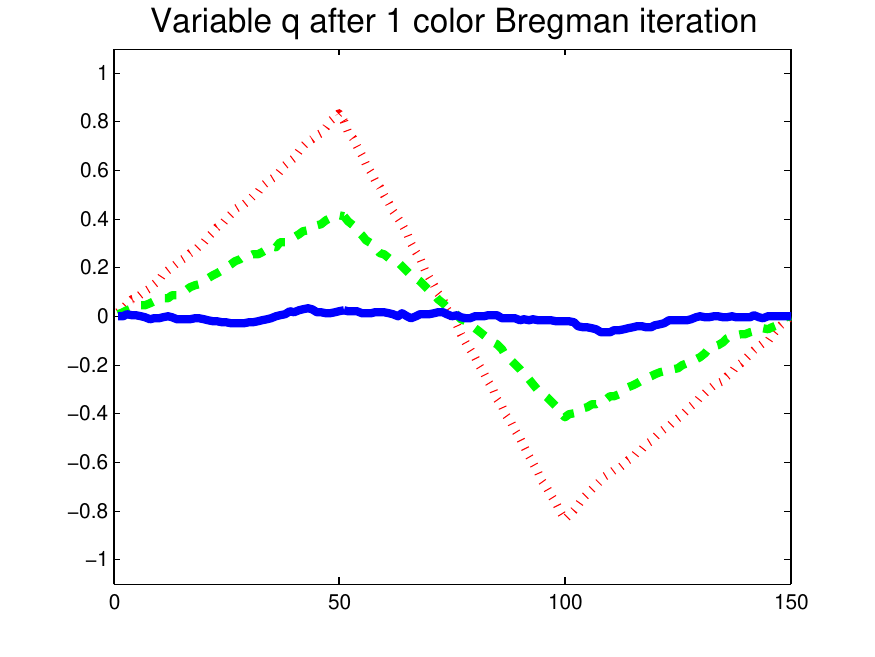}
\end{center}
\end{minipage}
\begin{minipage}[t]{0.32\textwidth}
\begin{center} \includegraphics[width=1\textwidth, natwidth=560,natheight=420]{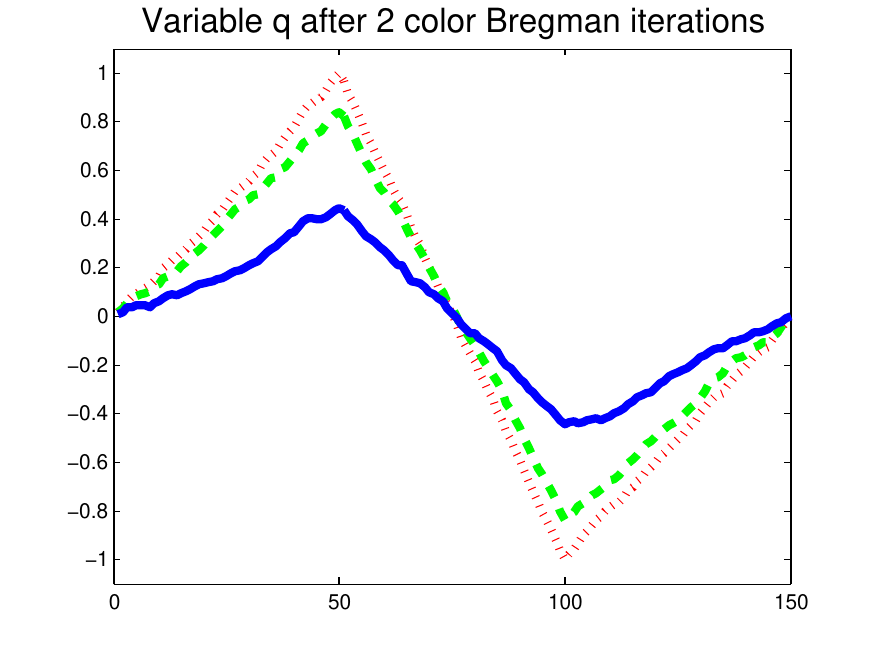}
\end{center}
\end{minipage}
\begin{minipage}[t]{0.32\textwidth}
\begin{center} \includegraphics[width=1\textwidth, natwidth=560,natheight=420]{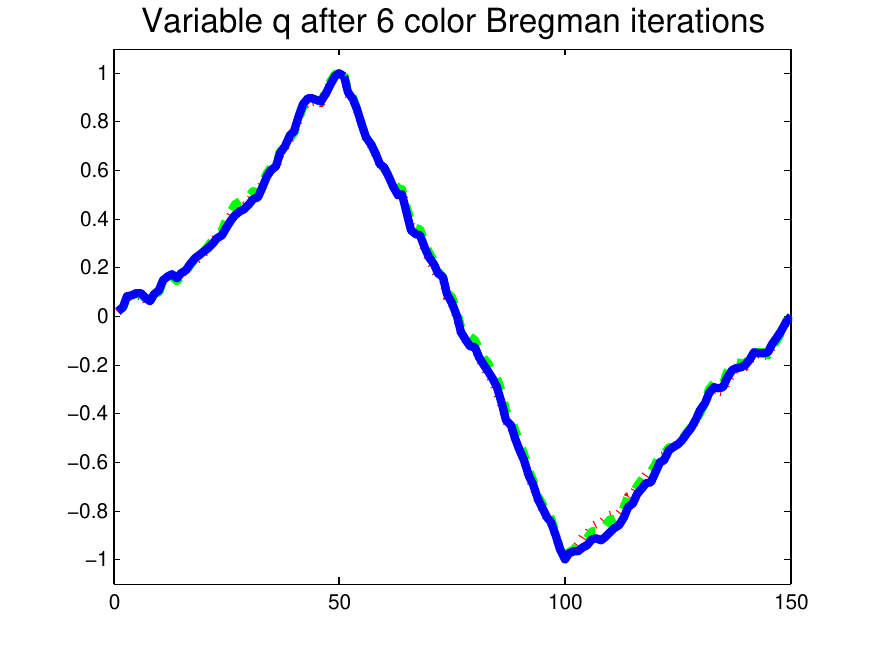}
\end{center}
\end{minipage}
\begin{minipage}[t]{0.32\textwidth}
\vspace{-0.5cm}
\begin{center} Variable $q$ after 1 iteration
\end{center}
\end{minipage}
\begin{minipage}[t]{0.32\textwidth}
\vspace{-0.5cm}
\begin{center} Variable $q$ after 2 iterations
\end{center}
\end{minipage}
\begin{minipage}[t]{0.32\textwidth}
\vspace{-0.5cm}
\begin{center} Variable $q$ after 6 iterations
\end{center}
\end{minipage}
\caption{One-dimensional example of color Bregman iteration}
\label{fig:1dex}
\end{center}
\end{figure}

We can see that the true steps are visible in the red and green but not in the blue channel. Even worse, the minimum and maximum values of $q$ in the blue channel are not at positions $50$ and $100$ such that the first edges introduced in a single channel reconstruction of blue will not be in the right place. As we can see in figure \ref{fig:1dex}, color Bregman iteration allows the blue channel to use the edge information of red and green to obtain a much better estimate for the true variable $q$. Finally, after six iterations, we reconstruct the true signal almost perfectly despite the heavy noise. 

Additional to the color Bregman iteration described above, we look at the case where different images or channels are likely to share an edge set but not necessarily the sign of $Du(j)$. In this case it would be natural to consider
\begin{align}
u_i^{k+1} &= \arg \min \frac{\lambda}{2} \| u_i - f_i\|^2 + \sum_j \left(1- \sum_l w_{i,l}|q^{k}_l(j)| \right) |Du_i(j)| ,
\end{align}
such that the magnitude but not the sign of the ${q}^{k}_l(j)$ matter. Again, we can combine edge information of multiple channels by using a weighted average of the absolute values of the $q^{k}_i(j)$. In a more general setting, we propose to use the infimal convolution of Bregman distances to achieve the independence of the sign. 

In the following sections we will analyze the above regularization procedure in detail in the continuous case and look at different regularizations $J(u)$. We will start by establishing the connection between the Bregman distance and common subgradients of different elements. 

\section{Bregman Distance Priors}
\label{sec:bregpriors}

The Bregman distance as first considered in \cite{bregman} and later generalized to arbitrary proper convex functions $J(u)$ in \cite{osh-bur-gol-xu-yin} is given by
\begin{equation}
D^p_J(v,u) = J(v) - J(u) - \langle p, v-u \rangle
\end{equation}
for $p \in \partial J(u)$. In the last years the Bregman distance has been used for various tasks including regularization \cite{osh-bur-gol-xu-yin}, optimization \cite{splitbregman, cai1, yin2} as well as for error estimates \cite{bur07}. 

For one-homogeneous functionals, the pairs $(u,v)$ with a Bregman distance of zero can be characterized as follows:
\begin{proposition}
\label{prop:zerobregdist}
Let $J$ be one-homogeneous and convex. Then 
\begin{equation}
	p \in \partial J(u) \cap \partial J(v) \Leftrightarrow D_J^p(v,u) = 0.
\end{equation}
\end{proposition}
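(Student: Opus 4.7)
The plan is to reduce the equivalence to the standard characterization of subgradients of one-homogeneous convex functionals, namely: for such $J$,
\[
p \in \partial J(u) \iff \langle p, u \rangle = J(u) \text{ and } \langle p, w \rangle \le J(w) \text{ for all } w.
\]
I would first record (or briefly derive) this fact. The nontrivial direction follows by applying the subgradient inequality at $u$ to the points $w = tu$ for $t > 0$ and sending $t \to 0$ and $t \to \infty$, using $J(tu) = tJ(u)$; the reverse direction is immediate from the two listed conditions.

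For the forward implication $(\Rightarrow)$ of the proposition, I would simply plug in. If $p \in \partial J(u) \cap \partial J(v)$, then by the characterization above $\langle p, u \rangle = J(u)$ and $\langle p, v \rangle = J(v)$, so
\[
D_J^p(v,u) = J(v) - J(u) - \langle p, v \rangle + \langle p, u \rangle = 0.
\]

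For the reverse implication $(\Leftarrow)$, I assume $p \in \partial J(u)$ and $D_J^p(v,u) = 0$. Using $\langle p, u \rangle = J(u)$ from the characterization, the vanishing of the Bregman distance rearranges to $J(v) = \langle p, v \rangle$. Moreover, $p \in \partial J(u)$ together with one-homogeneity gives $\langle p, w \rangle \le J(w)$ for every $w$. These two facts together exactly say $p \in \partial J(v)$, again via the characterization.

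I do not expect a real obstacle; the only substantive content is the auxiliary lemma about one-homogeneous subgradients satisfying $\langle p, u \rangle = J(u)$, and I would state and use it explicitly so that both directions of the equivalence become one-line verifications. If desired, the proof can be written entirely without invoking the auxiliary lemma by noting that for one-homogeneous $J$ the subgradient inequality at $u$ applied to $w = 0$ and $w = 2u$ forces $\langle p, u \rangle = J(u)$, but factoring this out into a preliminary statement keeps the argument transparent.
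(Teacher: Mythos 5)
Your proof is correct and takes essentially the same route as the paper: both rest on the characterization $\partial J(u) = \{p \mid \langle p, u\rangle = J(u),\ \langle p, w\rangle \le J(w)\ \forall w\}$, from which the Bregman distance collapses to $J(v) - \langle p, v\rangle$ and both directions of the equivalence become one-line verifications. The only difference is that you additionally sketch a derivation of this characterization (via $w = 0$ and $w = 2u$, or $tu$ with $t \to 0, \infty$), which the paper simply states.
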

\vspace*{-12pt}
\begin{proof}
Note that the subdifferential of one-homogeneous functionals $J(u)$ can be classified as
\begin{equation}
\label{eq:onehomogesubdiff}
\partial J(u) = \{p\ | \ \langle p, u \rangle = J(u), \  J(v) - \langle p, v \rangle \geq 0 \; \forall v \},
\end{equation} such that the Bregman distance simplifies to  
\begin{equation}
\label{eq:onehomobregdist}
D^p_J(v,u) = J(v) - \langle p, v \rangle.
\end{equation}
For $p \in  \partial J(v)$ we can use \eqref{eq:onehomogesubdiff} in \eqref{eq:onehomobregdist} to see that $D_J^p(v,u) = 0$. Vice versa, $D_J^p(v,u) = 0$ means that $J(v) = \langle p, v \rangle$ and since $p \in  \partial J(u)$ we have $J(z) - \langle p, z \rangle \geq 0 \ \forall z $ such that $p \in  \partial J(v)$ based on \eqref{eq:onehomogesubdiff}.
\end{proof}

Proposition \ref{prop:zerobregdist} gives a first indication that the Bregman distance measures the distance between the subdifferentials of two elements rather than measuring a direct distance. The motivation for our new, extended Bregman iteration comes from considering the total variation as a regularization functional, for which the subgradients encode information about the edges of an element without considering the size of the jump. As we will see in the next subsection, the corresponding Bregman distance measures the difference between the edge sets of two functions. To our minds the idea of assuming common edge sets for instance for the color channels of an image is very reasonable and allows us to propose a regularization that does not rely on any relation between the values of the color channels. 

We also mention that the assumption of merely equal discontinuity sets can be derived from the usual model of
color images as projections of a hyperspectral image $U$, i.e.
\begin{equation}
	u_i(x) = \int U(x,\omega) \phi_i(\omega) ~d\omega,
\end{equation}
where $\phi_i$ is an appropriate kernel for each channel. Under the natural condition that $U$ is smooth in the spectral dimension $\omega$, but changes between the characteristic spectra of different materials at some discontinuity set $\Gamma$ it is straight-forward to see that the discontinuity set of all images $u_i$ is contained in $\Gamma$ and it is even very likely to be equal to $\Gamma$ for all $i$.

\subsection{TV Bregman Distance and Edges}
Throughout this subsection, we will assume that $J$ is the total variation defined as
$$ J(u) = \sup_{q \in C_0^\infty(\Omega, \R^n),\  \| q \|_{\infty} \leq 1} \int_\Omega ~ \text{div}(q) \ u ~ dx $$
for an image domain $\Omega$.

A well-known result (cf. \cite{Res06}) serves as a first indication that a common edge set can be encoded by joint subgradients respectively by zero Bregman distance:
\begin{lemma}
\label{lem:monotoneBregDistZero}
Let $v=f(u)$ for some monotone function $f$ and $J=TV$. Then 
\begin{equation}
	D_J^p(v,u) = 0 \qquad \forall~p \in \partial J(u). 
\end{equation}
\end{lemma}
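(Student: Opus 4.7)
The plan is to reduce the claim to showing $\langle p, v \rangle = J(v)$, since by the one-homogeneity identity \eqref{eq:onehomobregdist} we have $D_J^p(v,u) = J(v) - \langle p, v\rangle$; equivalently, by Proposition \ref{prop:zerobregdist}, it suffices to verify that every $p \in \partial J(u)$ also lies in $\partial J(v)$. The admissibility condition $J(w) - \langle p, w \rangle \ge 0$ for all $w$ is automatic, so only the calibration $\langle p, v \rangle = J(v)$ is in question.

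To transfer the subgradient from $u$ to $v = f(u)$, I would invoke the standard representation of $\partial J(u)$ for $J=\mathrm{TV}$: every $p \in \partial J(u)$ can be written as $p = -\mathrm{div}(q)$ in the sense of the Anzellotti pairing, with $q \in L^\infty(\Omega;\R^n)$, $\|q\|_\infty \le 1$, and the calibration identity $\langle p, u \rangle = -\int u\,\mathrm{div}(q)\,dx = J(u)$. The geometric heart of the argument is then a coarea step: the coarea formula gives both
$$J(u) = \int_\R \mathrm{Per}(\{u > t\})\,dt \quad \text{and} \quad \int u\,p\,dx = \int_\R \left( -\int_{\{u>t\}} \mathrm{div}(q)\,dx \right) dt.$$
Since the inner integral on the right is always dominated by $\mathrm{Per}(\{u>t\})$ (because $q$ is admissible in the perimeter supremum), the equality of the outer integrals forces pointwise equality for almost every $t$: the same calibrating vector field $q$ saturates the perimeter of every super-level set $\{u > t\}$.

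Now I would exploit monotonicity of $f$: the super-level sets $\{v > s\}$ of $v = f(u)$ coincide, up to a reparametrization of $s$ and a null set of levels, with the sets $\{u > t\}$ (if $f$ is non-decreasing) or with their complements $\{u \le t\}$ (if $f$ is non-increasing; here one also uses $\mathrm{Per}(\{u \le t\}) = \mathrm{Per}(\{u > t\})$). Hence the same $q$ saturates $\mathrm{Per}(\{v > s\})$ for almost every $s$, and running the coarea identity for $v$ in reverse yields
$$\langle p, v \rangle = -\int v\,\mathrm{div}(q)\,dx = \int_\R \mathrm{Per}(\{v > s\})\,ds = J(v),$$
which is exactly what was needed.

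The main obstacle is the technical bookkeeping in the BV framework: one has to make sense of $\int v\,\mathrm{div}(q)\,dx$ for BV functions via the Anzellotti pairing $[q, Dv]$, and to apply the coarea decomposition at the level of this pairing rather than as a naive integration by parts. A secondary nuisance is the case split between $f$ increasing and $f$ decreasing; the decreasing case can alternatively be dispatched by noting that $-v = (-f)(u)$ with $-f$ non-decreasing and that $J(-v) = J(v)$, together with $\partial J(u) = -\partial J(-u)\cdot(-1)$ for one-homogeneous $J$, reducing it cleanly to the increasing case.
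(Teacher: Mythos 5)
The paper itself offers no proof of this lemma: it is stated as a known result with a pointer to \cite{Res06}, so there is no in-paper argument to compare against. Your calibration-plus-coarea strategy is the standard route to the contrast (morphological) invariance of TV subgradients and is sound for \emph{non-decreasing} $f$: reduce to $\langle p,v\rangle = J(v)$ via \eqref{eq:onehomobregdist}, write $p=-\mathrm{div}(q)$ with $\Vert q\Vert_\infty\le 1$, combine the coarea formula with $\langle p,u\rangle=J(u)$ to conclude that $q$ calibrates almost every super-level set of $u$, and note that the super-level sets of $f(u)$ are reparametrized super-level sets of $u$. Two of the technicalities you flag do need explicit care: the signed layer-cake identity requires $\langle p,1\rangle=0$ (which holds for TV subgradients because $J(u+c)=J(u)$), and the calibration must be upgraded from ``almost every $t$'' to ``every $t$'' (approximate $\{u>t\}$ monotonically by good levels and use lower semicontinuity of the perimeter); otherwise the reparametrized levels $t_s$ could all land in the exceptional null set, e.g.\ when $f$ is a step function.

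The non-increasing case, however, contains a genuine error --- and in fact the statement is false for decreasing $f$ if read literally: take $v=-u$ with $J(u)>0$; then $D_J^p(v,u)=J(-u)-\langle p,-u\rangle = 2J(u)>0$ for every $p\in\partial J(u)$. The geometric reason is exactly the orientation issue your argument glosses over: passing to complements flips the measure-theoretic outer normal, so the field $q$ that calibrates $\{u>t\}$ calibrates $\{v>s\}$ with the \emph{wrong sign}, and the coarea reconstruction returns $\langle p,v\rangle=-J(v)$, not $+J(v)$. Likewise, your proposed reduction via $-f$ only yields $p\in\partial J(-v)=-\partial J(v)$, i.e.\ $D_J^{-p}(v,-u)=0$ rather than $D_J^{p}(v,u)=0$. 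The lemma must therefore be read with ``monotone'' meaning monotonically non-decreasing; this is consistent with the only place the paper invokes it (the Jordan decomposition $f=f_1-f_2$ into two \emph{increasing} functions), and the failure for sign-reversing $f$ is precisely what the infimal convolution $D_J^p(v,u)\,\square\,D_J^{-p}(v,-u)$ is later introduced to repair. Restrict your proof to the non-decreasing case and drop the complement argument.
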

\vspace*{-12pt}

Another relevant result is related to images having a joint edge set and some disjoint parts at nonzero distance:
\begin{theorem}
\label{thm:commonedges}
Let $u$ and $v$ be piecewise constant with  $C^1$ discontinuity sets
\begin{align}
E(u) &= \Gamma \cup \Sigma, \\
E(v) &= \Gamma \cup \Upsilon, 
\end{align}
such that the shared edge set $\Gamma$, the respective disjoint parts of the edge sets $\Sigma$ and $\Upsilon$, and the domain boundary $\partial \Omega$ have pairwise positive distance and are boundaries of open sets, respectively. Moreover, let
\begin{equation}
\label{eq:samedirection}
 [u] [v] > 0 \quad \text{on} \ \Gamma, 
\end{equation} 
where $[u]$ denotes the jump across $\Gamma$ in normal direction. 
Then there exists $p \in \partial J(u) \cap \partial J(v)$.
\end{theorem}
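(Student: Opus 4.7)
The natural strategy is to produce a single vector field $q$ that serves as a dual certificate for both $u$ and $v$ simultaneously, and then take $p=-\mathrm{div}(q)$.

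First I would recall the dual characterization of the TV subdifferential: $p\in\partial J(u)$ iff $p=-\mathrm{div}(q)$ for some $q\in L^\infty(\Omega,\R^n)$ with $\|q\|_\infty\le 1$, $q\cdot\nu_{\partial\Omega}=0$ on $\partial\Omega$, and $\langle p,u\rangle = J(u)$. For a piecewise constant $u$ with $C^1$ discontinuity set $D(u)$, the latter calibration condition, after applying the divergence theorem on each constancy region, is equivalent to $q\cdot\nu_u=\mathrm{sign}([u])$ on $D(u)$, where $\nu_u$ is the chosen unit normal to $D(u)$ and $[u]$ is the jump in that direction. The hypothesis $[u][v]>0$ on $\Gamma$ is exactly the statement that, with the same choice of normal $\nu_\Gamma$, one has $\mathrm{sign}([u])=\mathrm{sign}([v])$ on $\Gamma$, so the two conditions $q\cdot\nu_\Gamma=\mathrm{sign}([u])$ and $q\cdot\nu_\Gamma=\mathrm{sign}([v])$ coincide on $\Gamma$.

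Next I would construct $q$ piece by piece in tubular neighborhoods of the three hypersurfaces. On a small neighborhood $T_\Gamma$ of $\Gamma$, the signed distance function $d_\Gamma$ is $C^1$ and $\nabla d_\Gamma$ is a smooth unit extension of $\nu_\Gamma$; set $q_\Gamma := \sigma_\Gamma\,\nabla d_\Gamma$ where $\sigma_\Gamma:=\mathrm{sign}([u])=\mathrm{sign}([v])$ (locally constant on connected components of $\Gamma$). Do the analogous construction on $T_\Sigma$ with $\sigma_\Sigma=\mathrm{sign}([u])$ and on $T_\Upsilon$ with $\sigma_\Upsilon=\mathrm{sign}([v])$. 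Because $\Gamma$, $\Sigma$, $\Upsilon$ and $\partial\Omega$ have pairwise positive distance, I can shrink the three tubes so that $\overline{T_\Gamma}$, $\overline{T_\Sigma}$, $\overline{T_\Upsilon}$ and a neighborhood of $\partial\Omega$ are pairwise disjoint. Let $\eta_\Gamma,\eta_\Sigma,\eta_\Upsilon$ be smooth cutoff functions with $0\le\eta_\bullet\le 1$, compactly supported in the corresponding tube, and equal to $1$ on the respective hypersurface, and define
\begin{equation}
q \;:=\; \eta_\Gamma q_\Gamma + \eta_\Sigma q_\Sigma + \eta_\Upsilon q_\Upsilon,
\end{equation}
extended by zero outside. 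Since the supports are disjoint and each $|q_\bullet|=1$, we have $\|q\|_\infty\le 1$; $q$ vanishes near $\partial\Omega$, so the boundary condition holds; and on each hypersurface the corresponding cutoff equals $1$ and $q$ restricts to the correct signed unit normal.

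Finally I would set $p:=-\mathrm{div}(q)$ and verify the calibration identity for both $u$ and $v$. Because $u$ is piecewise constant on open sets bounded by $\Gamma\cup\Sigma\cup\partial\Omega$ (across which we control $q\cdot\nu$), integration by parts on each region gives $\langle p,u\rangle=\int_{\Gamma\cup\Sigma}(q\cdot\nu_u)[u]\,d\mathcal{H}^{n-1}=\int_{\Gamma\cup\Sigma}|[u]|\,d\mathcal{H}^{n-1}=J(u)$, and identically for $v$ using $\Gamma\cup\Upsilon$. Combined with $\|q\|_\infty\le 1$ and the dual characterization this yields $p\in\partial J(u)\cap\partial J(v)$.

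The main obstacle is really the bookkeeping around signs and orientations on $\Gamma$: one has to be sure that, when the same orientation of $\nu_\Gamma$ is used for the pairings in $J(u)$ and $J(v)$, the hypothesis $[u][v]>0$ translates exactly into the compatibility $\sigma_\Gamma$ being well-defined. Once this is checked, the positive-distance assumption makes the patching routine, and the only mild analytic point is regularity of the signed distance function, which holds in a tube because $\Gamma,\Sigma,\Upsilon$ are $C^1$ boundaries of open sets.
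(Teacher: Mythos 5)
Your proposal is correct and follows essentially the same route as the paper: both construct a vector field supported in small tubular neighborhoods of the discontinuity sets, equal to the appropriately signed unit normal (via the gradient of a signed distance function) on each piece, with the hypothesis $[u][v]>0$ guaranteeing that the sign required on $\Gamma$ by $u$ and by $v$ is the same, and then take the divergence and verify the calibration identity by integration by parts on the constancy regions. The only cosmetic differences are that the paper uses a single signed distance to $D(u)\cup D(v)$ with a cutoff $f_\epsilon(d)$ depending on the distance, whereas you use separate tubes and cutoffs per hypersurface, and an opposite sign convention for the divergence.
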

\begin{proof}
Let $\epsilon$ be smaller than the minimal distance between any of the sets $\Gamma$, $\Sigma$, $\Upsilon$, $\partial \Omega$. Let $d$ denote the signed distance function to $E(u) \cup E(v)$. 
Moreover, let $f_\epsilon$ be a nonnegative function with support in $(-\epsilon,\epsilon)$ attaining its maximum value one at argument zero. Then 
$$ p = \nabla \cdot ( s f_\epsilon(d) \nabla d), $$
is a subgradient if $s$ equals the sign of $[u]$ on $E(u)$ and the sign of $[v]$ on $E(v)$. Note that the latter choice is possible since $[u][v] > 0$ on the intersection of the jump sets. We only verify that $p \in \partial J(u)$, the assertion for $v$ is completely analogous. First of all, we know that for a piecewise constant function with smooth subset the total variation is given by (cf. \cite{ambrosio,evansgariepy})
$$ J(u) = \int_{E(u)} |[u]|~d\sigma. $$
Now we have that $p$ is the divergence of a vector field with maximal value of the Euclidean norm equal to one and compact support, which implies
$$ \langle p, \varphi \rangle \leq J(\varphi) \qquad \forall~\varphi \in BV(\Omega). $$
It remains to verify $\langle p, u  \rangle = J(u)$. We have
$$ \langle p, u \rangle = \int_\Omega u \nabla \cdot (s f_\epsilon(d) \nabla d) ~dx. $$ 
Integrating by parts on the open sets
separated by $E(u)$ and using that $\nabla u$ vanishes away from the discontinuity set we obtain
$$ \langle p, u \rangle = \int_{E(u)} [u] s f_\epsilon(d) \nabla d \cdot n ~d\sigma. $$
Since on $E(u)$ we have $d=0$, $\nabla d=n$, and $ [u]s = |[u]|$, we conclude that the right-hand side coincides with the above formula for the total variation of $u$. 
\end{proof}

Theorem \ref{thm:commonedges} confirms that a joint subgradient indeed exists (respectively, two elements have a zero Bregman distance) if certain conditions on the edge sets are met. We can see that one image having an edge at a place where the other one is constant does not influence the existence of a joint subgradient. This is an important piece of information for using the Bregman distance as a regularization, because it tells us that no edges are introduced artificially if one channel has an edge at a place where the other one does not.

Theorem \ref{thm:commonedges}, however, also tells us that a common edge alone is not enough. Condition \eqref{eq:samedirection} shows that on the common edge set, the jumps across the discontinuity sets have to point in the same direction. In the following we will speak of the jump across the discontinuity set as the `direction' of an edge. In this sense, a zero Bregman distance encodes that the edges two images (or channels) have in common `point in the same direction'. Note that the condition \eqref{eq:samedirection} is the two-dimensional analogon of the $Du_i(j)$ having the same sign for all $i$ in section \ref{sec:nutshell}. Also note that the edge-aligning property of the Bregman distance, or more precisely the alignment of the level lines, was one of the motivations for the iterative Bregman regularization in \cite{osh-bur-gol-xu-yin}. 

In the next subsection we will look at $\ell^1$ regularization, where the analogous property of the `edges pointing in the same direction' is easy to understand.

\subsection{$\ell^1$ Bregman Distance and Support}

It is well known that the subdifferential of the $\ell^1$ norm can be characterized as follows:
\begin{align}
p \in \partial \|u\|_1 \ \Leftrightarrow \ p(j) \left\{ \begin{array}{ll} = 1 & \text{if } u(j) >0, \\ =-1 & \text{if } u(j) <0, \\ \in [-1,1] & \text{else.}
\end{array} \right.
\end{align}
With the help of this representation it is easy state the analogous version of Theorem \ref{thm:commonedges} in the $\ell^1$ case.
\begin{proposition}
\label{prop:thminl1case}
Let $u,v \in \R^n$ and let $J=\Vert \cdot \Vert_1$. Let $I$ be the index set where $u$ and $v$ are nonzero, $I = \{i ~ | ~   u_i \neq 0, v_i\neq 0 \}$. Then $sign(u_i) = sign(v_i)$ for all $i \in I$ is equivalent to the existence of a  $p \in \partial J(u) \cap \partial J(v)$.
\end{proposition}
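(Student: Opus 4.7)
The plan is to prove both implications by explicit use of the componentwise characterization of $\partial \Vert \cdot \Vert_1$ recalled just before the proposition. The argument is essentially a bookkeeping exercise on indices, so I will keep it short and focus on choosing an appropriate candidate subgradient.

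For the forward direction, assuming $\text{sign}(u_i)=\text{sign}(v_i)$ for all $i \in I$, I will construct a single vector $p\in\R^n$ componentwise as follows. For $i \in I$ I set $p(i)$ equal to the common sign of $u_i$ and $v_i$. For indices where exactly one of $u_i, v_i$ is nonzero (say $u_i\ne 0$ and $v_i=0$), I set $p(i)=\text{sign}(u_i)$, which lies in $[-1,1]$ and is therefore admissible in $\partial J(v)$ at the zero component; the symmetric case is handled the same way. For indices where $u_i=v_i=0$, I set $p(i)=0$ (any value in $[-1,1]$ would do). By the characterization of the $\ell^1$-subdifferential, this $p$ satisfies both $p \in \partial J(u)$ and $p\in \partial J(v)$ simultaneously.

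For the reverse direction, suppose that some $p \in \partial J(u) \cap \partial J(v)$ exists. For any $i \in I$, the condition $p \in \partial J(u)$ forces $p(i)=\text{sign}(u_i)\in\{-1,1\}$ because $u_i\ne 0$, and likewise $p \in \partial J(v)$ forces $p(i)=\text{sign}(v_i)$. Equating the two expressions gives $\text{sign}(u_i)=\text{sign}(v_i)$ on $I$.

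There is really no serious obstacle here: the entire argument reduces to the pointwise constraints imposed by the explicit description of $\partial \Vert \cdot \Vert_1$. The only mild subtlety is making sure that on indices outside $I$ the chosen value of $p(i)$ actually lies in $[-1,1]$, which is automatic for the candidate $p$ described above. Hence the equivalence follows immediately.
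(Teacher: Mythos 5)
Your proof is correct and follows exactly the componentwise argument the paper intends: the proposition is stated without proof, immediately after the explicit characterization of $\partial\Vert\cdot\Vert_1$, precisely because the equivalence reduces to the pointwise constraints you spell out. Your construction of $p$ (common sign on $I$, the sign of the nonzero entry where only one vanishes, anything in $[-1,1]$ where both vanish) and the forced equality $p(i)=\text{sign}(u_i)=\text{sign}(v_i)$ on $I$ for the converse are exactly the right details.
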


As we can see the signs of $u$ and $v$ on the common support have to coincide. For relating a common subgradient and finally an equal support with equal signs to the Bregman distance, it helps to write the Bregman distance as the following sum:
\begin{align}
\label{eq:ell1bregdist}
D_{\| \cdot \|_1}^p(v,u) = \sum_j (\text{sign}(v(j)) - p(j)) v(j).
\end{align}
The following lemma gives the conditions under which the support as well as the signs of nonzero entries of two elements $u$ and $v$ coincide:
\begin{lemma}
\label{lem:samesupport_samesign}
Let $u,v \in \R^n$ and let $J=\Vert \cdot \Vert_1$. Then 
\begin{equation}
	D_{\| \cdot \|_1}^p(v,u) =0 \qquad \forall~p \in \partial \| u \|_1. 
\end{equation}
implies that sign$_0(u) v \geq 0$, i.e. $v(i)$ cannot have opposite signs as $u(i)$.\\
Moreover sign$_0(u)$=sign$_0(v)$ if and only if 
\begin{equation}
	D_{\| \cdot \|_1}^p(v,u) =0, \quad D_{\| \cdot \|_1}^q(u,v)=0 \qquad \forall~p \in \partial \| u \|_1, q \in \partial \| v \|_1. 
\end{equation}
\end{lemma}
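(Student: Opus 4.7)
The plan is to exploit the simplified expression \eqref{eq:ell1bregdist} for the $\ell^1$ Bregman distance. Since a subgradient $p \in \partial\|\cdot\|_1(u)$ satisfies $|p(j)| \le 1$ everywhere and $p(j) = \mathrm{sign}(u(j))$ whenever $u(j) \ne 0$, one has $\langle p, u\rangle = \|u\|_1$, and so rewriting \eqref{eq:ell1bregdist} gives
\begin{equation*}
D_{\|\cdot\|_1}^p(v,u) \;=\; \sum_j \bigl(|v(j)| - p(j)\,v(j)\bigr),
\end{equation*}
where every summand is nonnegative because $|p(j)| \le 1$. This nonnegativity-of-each-term structure is the engine of the whole proof: a zero Bregman distance must force each summand to vanish.

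For the first claim, I would fix an index $j$ and split cases according to whether $u(j) = 0$ or not. If $u(j) \ne 0$, then $p(j) = \mathrm{sign}(u(j))$ is forced, and $|v(j)| = \mathrm{sign}(u(j))\,v(j)$ says either $v(j) = 0$ or $v(j)$ has the same sign as $u(j)$; in either case $\mathrm{sign}_0(u(j))\,v(j) \ge 0$. If $u(j) = 0$, then $p(j)$ ranges freely over $[-1,1]$; evaluating the condition $|v(j)| = p(j)\,v(j)$ at both $p(j)=1$ and $p(j)=-1$ yields $v(j) = 0$. In particular $\mathrm{sign}_0(u(j))\,v(j) = 0$ as well, establishing $\mathrm{sign}_0(u)\,v \ge 0$ componentwise (and actually giving the stronger fact that $\mathrm{supp}(v) \subseteq \mathrm{supp}(u)$).

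For the second claim, the forward implication is obtained by applying the first part symmetrically: the vanishing of $D_J^p(v,u)$ for all $p$ forces $\mathrm{supp}(v)\subseteq\mathrm{supp}(u)$ with matching signs on the common support, while the vanishing of $D_J^q(u,v)$ for all $q$ forces $\mathrm{supp}(u)\subseteq\mathrm{supp}(v)$ with matching signs there, so together $\mathrm{sign}_0(u) = \mathrm{sign}_0(v)$. For the converse, assume $\mathrm{sign}_0(u) = \mathrm{sign}_0(v)$ and pick any $p \in \partial J(u)$. On the common support, $p(j) = \mathrm{sign}(u(j)) = \mathrm{sign}(v(j))$, so the summand $|v(j)| - p(j)v(j)$ vanishes; off the common support (where $u(j) = v(j) = 0$) the summand is trivially $0$. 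Hence $D_J^p(v,u) = 0$, and the symmetric argument handles $D_J^q(u,v)$.

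The only mildly delicate point, and the main thing to get right, is the handling of the quantifier "for all $p \in \partial J(u)$" at indices where $u(j) = 0$: it is precisely this freedom in $p(j)$ that forces $v(j) = 0$ there, which is the missing ingredient that upgrades the weak sign compatibility of part one into the full support-plus-sign equality of part two. Beyond this quantifier subtlety, the proof is a case analysis with no real technical obstacle.
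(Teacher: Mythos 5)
Your proof is correct and follows essentially the same route as the paper's: both rest on the representation \eqref{eq:ell1bregdist}, the nonnegativity of each summand (forcing termwise vanishing), and the freedom in choosing $p(j)$ at indices where $u(j)=0$ to conclude $v(j)=0$ there. The only cosmetic difference is that the paper tests with the single subgradient $p=\mathrm{sign}_0(u)$ (so $p(j)=0$ on the zero set of $u$) while you test with $p(j)=\pm 1$; both exploit the same quantifier over all subgradients.
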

\vspace*{-12pt}
\begin{proof}
Let $I = \{j ~|~ v(j) \neq 0, ~ \text{sign}_0(u(j)) \neq \text{sign}_0(v(j)) \}$. Using \eqref{eq:ell1bregdist} with the subgradient $p = \text{sign}_0(u)$, it is easy to see that
$$ D_{\| \cdot \|_1}^p(v,u) = \sum_{j \in I } (\text{sign}_0(v(j)) - \text{sign}_0(u(j))) v(j). $$
Since the signs of $v(j)$ and $u(j)$ in each summand are different, any summand would be strictly positive. Due to $D_{\| \cdot \|_1}^p(v,u) =0$, $I$ has to be empty. Particularly, the support of $v$ is contained in the support of $u$. If additionally  $D_{\| \cdot \|_1}^q(u,v) =0$ for all $q$, the support of $u$ is contained in the support of $v$ and the $u(i)$ cannot have opposite signs as the $v(i)$, which leads to sign$_0(u)$=sign$_0(v)$. The reverse implication follows straight-forward using equation \eqref{eq:ell1bregdist}.
\end{proof}

Keeping the idea of regularizing with the Bregman distances between several elements in mind, the natural question arising from lemma \ref{lem:samesupport_samesign}, proposition \ref{prop:thminl1case}, and theorem \ref{thm:commonedges} is if a zero Bregman distance is too restrictive considering the `same sign' (respectively the `edges pointing in the same direction') conditions. 

In the context of recent literature (cf. \cite{Spr11} and the references therein), a zero Bregman distance with respect to the $\ell^1$ norm can be seen as a special case of collaborative sparsity, where not only the support, but also the signs should coincide. In case the equality of signs is too restrictive, the independence of signs can be achieved by considering the infimal convolution of Bregman distances as a regularization, as Lemma \ref{lem:samesupport} shows.

The infimal convolution of two convex functionals $J(\cdot )$ and $R(\cdot )$ is defined as follows
\begin{align}
J(v)\square R(v) = \inf_{\phi, \psi, v = \phi + \psi} J(\phi) + R(\psi).
\end{align}
In the following we are going to further investigate the infimal convolution of Bregman distances, not to be confused with the Bregman distance for the infimal convolution of functionals. More precisely, the infimal convolution is carried out for different subgradients (related to the second argument) considering the Bregman distance as a functional of the first argument, which is indeed convex as soon as $J$ is.

\begin{lemma}
\label{lem:l1infconvrepresentation}
The infimal convolution between the $\ell^1$ Bregman distance $D_{\| \cdot \|_1}^p(v,u) $ and $D_{\| \cdot \|_1}^{-p}(v,-u)$ with respect to $v$ is given by
\vspace*{-6pt}
\begin{equation}
\label{eq:l1infconvrepresenation}
	D_{\| \cdot \|_1}^p(v,u) \square D_{\| \cdot \|_1}^{-p}(v,-u) = \sum_{i=1}^N |v(i)|(1-|p(i)|).
\end{equation}
\vspace*{-6pt}
\end{lemma}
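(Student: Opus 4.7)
The plan is to exploit the explicit coordinate-wise form of the $\ell^1$ Bregman distance derived in equation \eqref{eq:ell1bregdist} and reduce the infimal convolution to a scalar optimization at each index.

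First I would note two preliminaries. Since $J=\|\cdot\|_1$ is symmetric ($J(-u)=J(u)$), the characterization of $\partial J$ given in the excerpt immediately yields $p\in\partial J(u)\Longleftrightarrow -p\in\partial J(-u)$, so the second Bregman distance in the statement is well-defined. Using the representation \eqref{eq:ell1bregdist} (equivalently, $D_J^p(w,u)=\sum_j(|w(j)|-p(j)w(j))$ coming from $\langle p,u\rangle=J(u)$ by $1$-homogeneity), I would write
\begin{equation*}
D_J^p(\phi,u)+D_J^{-p}(\psi,-u)=\sum_{j=1}^N\bigl(|\phi(j)|-p(j)\phi(j)+|\psi(j)|+p(j)\psi(j)\bigr),
\end{equation*}
which depends on $u$ only through $p$. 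Imposing the constraint $\phi+\psi=v$ makes the problem decouple across indices.

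Next I would carry out the scalar minimization at each coordinate $j$. Dropping the index and writing $a=\phi(j)$, $b=v(j)$, $c=p(j)$ (with $|c|\le 1$), the task is to compute
\begin{equation*}
m(b,c):=\min_{a\in\R}\Bigl(|a|+|b-a|-2ca+cb\Bigr).
\end{equation*}
By the triangle inequality $|a|+|b-a|\ge|b|$, with equality exactly when $a$ lies between $0$ and $b$; outside this interval the sum grows at rate $2$ in $|a|$, which dominates the linear term $-2ca$ since $|c|\le 1$. Hence the minimizer lies in the segment between $0$ and $b$, and there the objective becomes $|b|-2ca+cb$, linear in $a$. Checking the two endpoints $a=0$ and $a=b$ yields values $|b|+cb$ and $|b|-cb$ respectively, so $m(b,c)=|b|-|cb|=|v(j)|(1-|p(j)|)$. (The case $b=0$ is immediate with $a=0$.)

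Summing over $j$ gives the claimed identity. The main subtlety is not an obstacle but a verification point: one must confirm that the optimal $a$ lies in the interval between $0$ and $b$, which uses $|c|\le 1$ (i.e.\ $p\in\partial\|\cdot\|_1(u)$); if this bound were violated, the infimum would be $-\infty$. Thus $|p(j)|\le 1$ is exactly what guarantees nonnegativity of the infimal convolution and makes the explicit formula valid.
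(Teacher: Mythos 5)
Your proof is correct and follows essentially the same route as the paper's: both reduce the infimal convolution to the coordinate-wise problem $\inf_{\phi+\psi=v}\sum_j\bigl(|\phi(j)|-p(j)\phi(j)+|\psi(j)|+p(j)\psi(j)\bigr)$ and then obtain the value from the triangle inequality combined with $|p(j)|\le 1$ (lower bound) and the choice $\phi(j)\in\{0,v(j)\}$ according to the sign of $p(j)v(j)$ (upper bound). Your packaging of this as an exact one-dimensional minimization at each index, rather than the paper's separate two-sided estimate, is only a cosmetic difference.
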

\begin{proof}
We compute
\begin{align}
D_{\| \cdot \|_1}^p(v,u) \square D_{\| \cdot \|_1}^{-p}(v,-u) &= \inf_{\phi, \psi, v = \phi + \psi} \|\phi\|_1 - \langle \phi, p \rangle + \|\psi\|_1 + \langle \psi, p \rangle, \nonumber \\
&= \inf_{\phi, \psi, v = \phi + \psi} \sum_i (|\phi(i)| - p(i) \phi(i) + |\psi(i)| + p(i) \psi(i)), \nonumber  \\
&= \inf_{\phi, \psi, v = \phi + \psi} \sum_i |\phi(i)|(1 -p(i) \text{sign}(\phi(i))) \nonumber \\
& \qquad \qquad \ +   \sum_i |\psi(i)|(1 +p(i) \text{sign}(\psi(i))),  \nonumber \\
&\geq\inf_{\phi, \psi, v = \phi + \psi} \sum_i |\phi(i)|(1 -|p(i)| ) +   \sum_i |\psi(i)|(1 - |p(i)|)\nonumber \\
&\geq \sum_i |v(i)|(1 -|p(i)|).  \nonumber 
\end{align}
On the other hand, $D_{\| \cdot \|_1}^p(v,u) \square D_{\| \cdot \|_1}^{-p}(v,-u) \leq \sum_i |v(i)|(1 -|p(i)|)$ holds for $\phi(i) = v(i)$ and $\psi(i) = 0$ if $\text{sign}(v(i)) = \text{sign}(p(i))$ and  $\phi(i) = 0$ and $\psi(i) = v(i)$ if $\text{sign}(v(i)) = -\text{sign}(p(i))$.
\end{proof}

As we can see $D_{\| \cdot \|_1}^p(v,u) \square D_{\| \cdot \|_1}^{-p}(v,-u)$ does not penalize $v(i)$ if $|p(i)|=1$ and thus is independent of the sign of $u(i)$. We can now formulate the property of two elements having the same support in terms of the infimal convolution of Bregman distances:
\begin{lemma}
\label{lem:samesupport}
Let $u,v \in \R^n$. Then
\begin{equation}
	D_{\| \cdot \|_1}^p(v,u) \square D_{\| \cdot \|_1}^{-p}(v,-u)  =0 \qquad \forall~p \in \partial \|u\|_1 \label{equalsupport1}
\end{equation}
if and only if the support of $v$ is contained in the support of $u$.
Moreover $u$ and $v$ have equal support if and only if \eqref{equalsupport1} and
\begin{equation}
	D_{\| \cdot \|_1}^q(u,v) \square D_{\| \cdot \|_1}^{-q}(u,-v)  =0 \qquad \forall~q \in \partial \|v\|_1
\end{equation}
hold.
\end{lemma}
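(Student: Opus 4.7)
The plan is to leverage the explicit formula from Lemma \ref{lem:l1infconvrepresentation}, which states that
$$ D_{J}^p(v,u) \square D_{J}^{-p}(v,-u) = \sum_{i=1}^N |v(i)|(1-|p(i)|). $$
Since the characterization of $\partial \| \cdot \|_1$ forces $p(i) = \text{sign}(u(i))$ on the support of $u$ and leaves $p(i) \in [-1,1]$ free on the zero set of $u$, the proof reduces to manipulating the sum indexwise for cleverly chosen subgradients.

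For the first implication, I would select the specific subgradient $p \in \partial J(u)$ defined by $p(i) = \text{sign}(u(i))$ when $u(i) \neq 0$ and $p(i) = 0$ when $u(i) = 0$. With this choice, $1 - |p(i)| = 0$ on the support of $u$ and $1 - |p(i)| = 1$ off the support, so the vanishing of the infimal convolution collapses to
$$ \sum_{i : u(i) = 0} |v(i)| = 0, $$
which forces $v(i) = 0$ whenever $u(i) = 0$, i.e., $\text{supp}(v) \subseteq \text{supp}(u)$.

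For the second part, the reverse implication is trivial: applying the first part with the roles of $u$ and $v$ swapped yields both inclusions and therefore equal supports. For the forward implication, assume $\text{supp}(u) = \text{supp}(v)$. For any $p \in \partial J(u)$, I split the index set: on $\text{supp}(u)$ we have $|p(i)| = 1$ so the summand vanishes, while off $\text{supp}(u) = \text{supp}(v)$ we have $v(i) = 0$ so the summand again vanishes. Hence the infimal convolution is zero for every $p \in \partial J(u)$, and the argument is symmetric in $(u,v)$, giving the second identity as well.

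There is no serious obstacle here; the only subtlety is recognizing that the first implication requires only one carefully chosen subgradient (rather than all of them, as the hypothesis supplies), whereas the second part genuinely uses that the identity holds for every $p \in \partial J(u)$, which is automatic under the equal-support assumption because $(1-|p(i)|)$ is multiplied by a quantity that vanishes wherever $p(i)$ has any freedom.
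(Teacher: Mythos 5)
Your proposal is correct and follows essentially the same route as the paper: both reduce the claim to the componentwise identity $|v(i)|(1-|p(i)|)=0$ via the representation in Lemma \ref{lem:l1infconvrepresentation}, and both exploit that $p(i)=0$ is an admissible choice off the support of $u$ while $|p(i)|=1$ is forced on it. Your remark that the first implication needs only one well-chosen subgradient, whereas the equal-support direction holds automatically for all of them, matches the logic of the paper's argument.
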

\begin{proof}
Using the representation \eqref{eq:l1infconvrepresenation} as well as the fact that the sum is zero if and only if each summand is zero, the lemma simplifies to stating that the support of $v$ is contained in the support of $u$ if and only if 
\begin{equation}
\label{eq:componentwise}
 |v(i)|(1 -|p(i)|) = 0 \qquad \forall~p(i) \in \text{sign}(u(i)) 
\end{equation}
for all $i$. If there exists an $i$ for which $u(i)=0$ but $v(i)\neq 0$, $p(i)=0$ is a valid choice for the $i$-th component of the subgradient and equation \eqref{eq:componentwise} is violated. On the other hand, if there exists a $p(i)$ such that $p(i) \in \text{sign}(u(i))$ but $|v(i)|(1 -|p(i)|) \neq 0$, then  $|v(i)|>0$ and $|p(i)|<1$, which, however, means that $u(i) = 0$. 
\end{proof}

This motivates us to consider the infimal convolution of Bregman distances in the TV case as well. Our goal is to show that the infimal convolution $D_{J}^p(v,u) \square D_{J}^{-p}(v,-u)$ with respect to TV can be zero independent of the direction of the edge set. 

\subsection{Infimal Convolution of Bregman Distances and Total Variation}
As a first auxiliary result and as a reasoning for also considering the Bregman distance using $-p$, the following Lemma states how the subdifferential of a one-homogeneous functional behaves with respect to a change of sign.
\begin{lemma} 
 Let $J$ be convex and one-homogeneous. Then $\partial J(-u) = - \partial J(u)$. 
\end{lemma}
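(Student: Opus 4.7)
The plan is to use the characterization of the subdifferential of a convex one-homogeneous functional supplied in equation \eqref{eq:onehomogesubdiff}, namely that $p\in\partial J(u)$ iff $\langle p,u\rangle = J(u)$ and $\langle p,v\rangle \le J(v)$ for every $v$. I would also use (and briefly note at the top of the proof) the symmetry $J(-u)=J(u)$; this is implicit in the paper's use of ``one-homogeneous'' for the examples $J=TV$ and $J=\|\cdot\|_1$, both of which are in fact absolutely one-homogeneous, so $J(\alpha u)=|\alpha|J(u)$ and hence $J(-u)=J(u)$. Without this symmetry the statement fails (for example, $J(u)=\max(u,0)$ on $\mathbb{R}$ is positively one-homogeneous and convex but violates the claim), so the proof essentially consists of unpacking the two defining inequalities of $\partial J$ and using this symmetry to swap signs.

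First I would establish the inclusion $-\partial J(u)\subseteq \partial J(-u)$. Take $p\in\partial J(u)$, set $q=-p$, and verify the two conditions in \eqref{eq:onehomogesubdiff} at the point $-u$. For the equality condition, $\langle q,-u\rangle = \langle p,u\rangle = J(u) = J(-u)$. For the inequality condition, for any test vector $v$,
\begin{equation*}
\langle q,v\rangle = -\langle p,v\rangle = \langle p,-v\rangle \;\le\; J(-v) \;=\; J(v),
\end{equation*}
where the middle inequality applies the second condition of \eqref{eq:onehomogesubdiff} for $p\in\partial J(u)$ to the test point $-v$. Hence $q=-p\in\partial J(-u)$.

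Then I would obtain the reverse inclusion $\partial J(-u)\subseteq -\partial J(u)$ by symmetry: applying the step just proven with $u$ replaced by $-u$ gives $-\partial J(-u)\subseteq \partial J(-(-u)) = \partial J(u)$, and negating both sides yields $\partial J(-u)\subseteq -\partial J(u)$. Combining the two inclusions gives $\partial J(-u) = -\partial J(u)$.

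The proof is almost entirely mechanical once the characterization \eqref{eq:onehomogesubdiff} is in hand; the only real point to flag is the implicit use of $J(-u)=J(u)$, which is the only place where absolute homogeneity (as opposed to mere positive one-homogeneity) enters. Since the subsequent infimal-convolution analysis already refers to objects like $D_J^{-p}(v,-u)$ and is carried out for $TV$ and $\|\cdot\|_1$, this convention is consistent with the paper's usage and poses no further obstacle.
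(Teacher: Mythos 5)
Your proof is correct and follows essentially the same route as the paper: both verify the two conditions of the characterization \eqref{eq:onehomogesubdiff} with the sign flipped and then obtain the reverse inclusion by symmetry. Your explicit flagging of the hidden assumption $J(-u)=J(u)$ is a worthwhile addition, since the paper's own proof uses this tacitly (in passing from $J(-u)=\langle -p,u\rangle$ to $J(u)=\langle -p,u\rangle$) without comment.
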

\begin{proof}
It is well known that the subdifferential of a one-homogeneous function can be characterized as follows:
$$ \partial J(u) = \{ p ~|~ J(u) = \langle p, u \rangle, ~ J(v) - \langle p,v\rangle \geq 0 \ \forall v \}. $$
Let $p \in \partial J(-u)$. By the above characterization, we have $J(-u) = \langle p, -u \rangle$. On the other hand $J(-u)=J(u)$ such that $J(u) = \langle -p, u \rangle$. Again by the above characterization, we know that $J(v) - \langle p,v\rangle \geq 0$ for all $v$. Choosing $-v$ instead of $v$ it also holds that $J(v) - \langle -p,v\rangle \geq 0$ for all $v$. The latter condition together with the previously shown fact that $J(u) = \langle -p, u \rangle$ shows $-p \in \partial J(u)$. Similar arguments show the reverse also holds.
\end{proof}

As a consequence it indeed makes sense to consider the infimal convolution of the Bregman distances with subgradients of opposite sign. An explicit formula seems out of reach currently, but we can provide upper and lower bounds, from which we can gain further insight. 

\begin{proposition}
\label{prop:infconvreformulation}
Let $v \in C^1$, $p=\nabla \cdot g \in \partial J(u)$ for $J=TV$. Then 
\begin{equation}
\label{eq:infconvTVrepresen}
\int_\Omega (1-|g|) |\nabla v|~dx\leq  D_{J}^p(v,u) \square D_{J}^{-p}(v,-u) \leq \int_\Omega |\nabla v| ~dx- | \int_\Omega \nabla v \cdot g~dx|.
\end{equation}
\end{proposition}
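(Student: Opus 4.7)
The plan is to reduce both sides to quantities involving $\nabla v$ and $g$ by exploiting one-homogeneity of $J=TV$ to simplify each Bregman distance, then handle the upper and lower bounds separately by choosing test pairs for the infimum and by a pointwise estimate respectively.

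First, I would use formula \eqref{eq:onehomobregdist} together with the preceding lemma ($-p \in \partial J(-u)$) to rewrite
\begin{equation*}
D_J^p(\phi,u) = J(\phi) - \langle p, \phi\rangle, \qquad D_J^{-p}(\psi,-u) = J(\psi) + \langle p, \psi\rangle,
\end{equation*}
so that
\begin{equation*}
D_J^p(v,u)\square D_J^{-p}(v,-u) = \inf_{\phi+\psi = v}\bigl[J(\phi)+J(\psi) - \langle p, \phi-\psi\rangle\bigr].
\end{equation*}
Using $p = \nabla\cdot g$ and integrating by parts (with the convention that $g$ vanishes suitably near $\partial \Omega$, which is implicit in $p$ being a subgradient obtained from such a vector field), I rewrite $-\langle p, \phi\rangle = \int g\cdot \nabla \phi\,dx$ and $\langle p,\psi\rangle = -\int g\cdot \nabla \psi\,dx$ for regular enough test functions, and the BV-pairing $-\int g\cdot d D\phi$ in general.

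For the upper bound I test the infimum against the two trivial splittings $(\phi,\psi) = (v,0)$ and $(\phi,\psi) = (0,v)$, using that $v\in C^1$ so that $J(v)=\int_\Omega |\nabla v|\,dx$. The two values obtained are $\int |\nabla v|\,dx \pm \int g\cdot \nabla v\,dx$; taking the smaller of the two gives exactly $\int|\nabla v|\,dx - \bigl|\int \nabla v\cdot g\,dx\bigr|$.

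For the lower bound I estimate each summand pointwise using $|g|\leq 1$ (which holds because $g$ comes from the subgradient representation). Namely, for any admissible $(\phi,\psi)$,
\begin{align*}
J(\phi) - \langle p, \phi\rangle &= \int |\nabla \phi|\,dx + \int g\cdot \nabla \phi\,dx \geq \int (1-|g|)|\nabla \phi|\,dx,\\
J(\psi) + \langle p, \psi\rangle &= \int |\nabla \psi|\,dx - \int g\cdot \nabla \psi\,dx \geq \int (1-|g|)|\nabla \psi|\,dx.
\end{align*}
Adding these and using nonnegativity of the weight $1-|g|$ together with the triangle inequality $|\nabla\phi|+|\nabla\psi|\geq |\nabla\phi+\nabla\psi| = |\nabla v|$ yields the lower bound $\int_\Omega (1-|g|)|\nabla v|\,dx$, uniformly in the decomposition, and hence for the infimum.

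The main obstacle is making the integration by parts and the pointwise estimate fully rigorous once the competitors $\phi,\psi$ are only in $BV(\Omega)$, not $C^1$; strictly speaking one must interpret $\int g\cdot \nabla\phi\,dx$ as the BV-pairing with the measure $D\phi$, bound it by $\int |g|\,d|D\phi|$, and reduce to $|\nabla v|\,dx$ via $|D\phi|+|D\psi|\geq |D(\phi+\psi)| = |Dv|$ as measures. The upper bound poses no such issue because the chosen test pair $(v,0)$ lies in $C^1$, so the computation is classical.
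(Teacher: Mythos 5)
Your proof is correct and follows essentially the same route as the paper: the paper parametrizes the splitting by $w$ (with $\phi=v-w$, $\psi=w$), obtains the upper bound from the two test choices $w=0$ and $w=v$, and derives the lower bound from the same pointwise Cauchy--Schwarz estimate $|\nabla\phi|+g\cdot\nabla\phi\geq(1-|g|)|\nabla\phi|$ followed by the triangle inequality. Your closing remark on interpreting the pairing via the measure $D\phi$ is a welcome elaboration of the paper's brief appeal to ``a density argument.''
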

\begin{proof}
We use the definition of the infimal convolution as the infimum of the functional
$$ F(w) = J(v-w)+J(w) - \langle p, v-w\rangle + \langle p, w \rangle . $$
By a density argument we can compute the infimal convolution as the infimum over all $C^1$ functions $w$ of 
$$ F(w) = \int_\Omega  |\nabla (v-w)|+ |\nabla w| + g \cdot \nabla (v - 2w) . $$
With the Cauchy-Schwarz inequalities on the gradient fields we obtain 
$$ F(w) \geq  \int_\Omega (1-|g|)( |\nabla (v-w)|+ |\nabla w|)~dx, $$
and the triangle inequality yields the lower bound. 

Choosing $w=0$ and $w=v$ and using the regularity of $v$ to integrate by parts we see
$$ \inf_w F(w) \leq F(0) = \int_\Omega |\nabla v| + g \cdot \nabla v ~dx $$
and 
$$ \inf_w F(w) \leq F(v) = \int_\Omega |\nabla v| - g \cdot \nabla v ~dx, $$ 
from which we obtain the upper bound.
\end{proof}

With the infimal convolution of Bregman distances we obtain a more general version of the morphological invariance, the infimal convolution also vanishes for nonmonotone transformations of the intensities:
\begin{lemma}
Let $v=f(u)$ for some function $f: \R \rightarrow \R  \in BV$ and $J=TV$. Then 
\begin{equation}
	D_J^p(v,u) \square D_J^{-p}(v,-u)= 0 \qquad \forall~p \in \partial J(u). 
\end{equation}
\end{lemma}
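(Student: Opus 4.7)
The strategy is to exploit the fact that any $f\in BV(\R)$ admits a Jordan decomposition $f=f_1-f_2$ with $f_1,f_2:\R\to\R$ nondecreasing, and to use this decomposition to construct an explicit admissible splitting for the infimal convolution.

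First I would set $\phi=f_1(u)$ and $\psi=-f_2(u)$, so that $v=f(u)=\phi+\psi$ is a valid decomposition in the definition of $\square$. Since $f_1$ is monotone, Lemma~\ref{lem:monotoneBregDistZero} immediately gives
\[
D_J^p(\phi,u)=0\qquad\forall\,p\in\partial J(u).
\]
For the second term, observe that if I define $g(t):=-f_2(-t)$ then $g$ is nondecreasing and $\psi=g(-u)$, so that $\psi$ is a monotone function of $-u$. Applying Lemma~\ref{lem:monotoneBregDistZero} with base point $-u$ gives $D_J^q(\psi,-u)=0$ for every $q\in\partial J(-u)$. By the preceding lemma, $\partial J(-u)=-\partial J(u)$, so $-p\in\partial J(-u)$ and hence
\[
D_J^{-p}(\psi,-u)=0.
\]

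Combining these two identities with the admissible splitting $v=\phi+\psi$ yields
\[
D_J^p(v,u)\,\square\,D_J^{-p}(v,-u)\;\le\;D_J^p(\phi,u)+D_J^{-p}(\psi,-u)=0,
\]
while nonnegativity of Bregman distances forces the infimal convolution to be $\ge 0$, giving equality.

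The only delicate point I expect is justifying the Jordan decomposition in the appropriate function-space sense, i.e.\ that $f_1(u)$ and $f_2(u)$ lie in $BV(\Omega)$ so that Lemma~\ref{lem:monotoneBregDistZero} applies to them directly. For $u\in BV(\Omega)$ and $f_i$ monotone and bounded on the range of $u$, the compositions $f_i(u)$ are in $BV(\Omega)$ by the standard chain rule for $BV$ (Vol'pert), and the only mild subtlety is a density/truncation argument if $f$ is merely $BV$ on $\R$ without bounded variation on the range of $u$; in that case one approximates by bounded truncations and passes to the limit using lower semicontinuity of the Bregman distance. Everything else is a direct assembly of results already established in the paper.
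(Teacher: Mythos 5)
Your proposal is correct and follows essentially the same route as the paper: Jordan decomposition $f=f_1-f_2$ into monotone parts, the splitting $v=f_1(u)+(-f_2(u))$, and Lemma~\ref{lem:monotoneBregDistZero} applied to each part to make both Bregman distances vanish. You merely spell out more explicitly the step $D_J^{-p}(-f_2(u),-u)=0$ via $\partial J(-u)=-\partial J(u)$ and note the $BV$ chain-rule subtlety, both of which the paper leaves implicit.
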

\vspace*{-12pt}
\begin{proof}
According to the Jordan decomposition (cf. \cite{Bia11} for a nice extension to higher dimensions), any function $f: \R \rightarrow \R \in BV$ can be written as $ f = f_1 - f_2 $ for monotonically increasing functions $f_1$ and $f_2$. Now lemma \ref{lem:monotoneBregDistZero} tells us that $J(f_1(u))  - \langle p, f_1(u) \rangle = 0$ and $J(f_2(u))  - \langle p, f_2(u) \rangle = 0$ for all $p$, which immediately yields $D_J^p(v,u) \square D_J^{-p}(v,-u)= 0 $  for all $p$. 
\end{proof}

Another generalization concerns the case of piecewise constants, now we obtain a version independent of the signs of jumps as desired:
\begin{theorem}
Let $u$ and $v$ be piecewise constant with $C^1$ discontinuity sets
\begin{align}
E(u) &= \Gamma \cup \Sigma, \\
E(v) &= \Gamma \cup \Upsilon, 
\end{align}
such that $\Gamma$, $\Sigma$, and $\Upsilon$ have pairwise positive distance.  
Then there exists $p \in \partial J(u)$ such that
$$ D_{J}^p(v,u) \square D_{J}^{-p}(v,-u) = 0 .$$
\end{theorem}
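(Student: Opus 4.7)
My plan is to extend the proof of Theorem \ref{thm:commonedges} by exploiting the freedom to choose the subgradient $p \in \partial J(u)$ on $\Upsilon$ (where it is unconstrained by $u$) and by exhibiting a decomposition $v = \phi + \psi$ that makes both Bregman summands of the infimal convolution vanish simultaneously. Concretely, with $d$ the signed distance to $\Gamma \cup \Sigma \cup \Upsilon$, $\epsilon$ smaller than the pairwise distances, and $f_\epsilon$ as in Theorem \ref{thm:commonedges}, I set $g := \tilde s\, f_\epsilon(d)\nabla d$ and $p := \nabla \cdot g$, where $\tilde s$ equals $\text{sign}([u])$ on $\Gamma \cup \Sigma = D(u)$ and equals $\text{sign}([v])$ on $\Upsilon$. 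The pairwise positive distance of the three sets makes $\tilde s$ consistent and $g$ smooth, and the argument of Theorem \ref{thm:commonedges} verifies $p \in \partial J(u)$ verbatim since $g \cdot n = \text{sign}([u])$ on $D(u)$ and $|g| \leq 1$.

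Next I would build the decomposition. On each connected component of $\Gamma$ the signs of $[u]$ and $[v]$ are constant, so $\Gamma$ splits into $\Gamma^+ = \{[u][v]>0\}$ and $\Gamma^- = \{[u][v]<0\}$ as a disjoint union of whole components (components with $[v]=0$ may be absorbed into $\Gamma^+$). I define $\psi$ to be the piecewise constant function with jump set $\Gamma^-$ and $[\psi]=[v]$ on $\Gamma^-$, for instance $\psi = \sum_i [v]|_{\Gamma^-_i}\,\chi_{G_i}$ where $G_i$ is the open set bounded by the $i$-th component $\Gamma^-_i$, and set $\phi := v - \psi$, which is then piecewise constant with $D(\phi)=\Gamma^+ \cup \Upsilon$ and $[\phi]=[v]$ on $D(\phi)$. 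The two subgradient identities follow from the same integration by parts used in Theorem \ref{thm:commonedges}: on $D(\phi)$ one has $g\cdot n = \tilde s = \text{sign}([\phi])$ (on $\Gamma^+$ because $\text{sign}([u])=\text{sign}([v])$, and on $\Upsilon$ by the choice of $\tilde s$), so $p \in \partial J(\phi)$ and therefore $D_J^p(\phi,u) = 0$; on $D(\psi)=\Gamma^-$ one has $g\cdot n = \text{sign}([u]) = -\text{sign}([v]) = -\text{sign}([\psi])$, so $-g\cdot n = \text{sign}([\psi])$ and $|-g|\leq 1$, giving $-p \in \partial J(\psi)$ and thus $D_J^{-p}(\psi,-u) = 0$. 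Adding the two identities, together with the one-homogeneous reformulation of the Bregman distance, shows that the infimal convolution is zero.

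The step I anticipate to be the main obstacle is justifying global existence of the piecewise constant $\psi$ with jumps $[v]$ precisely on $\Gamma^-$ and nowhere else. This requires the topological assumption that each component of $\Gamma^-$ bounds an open set at positive distance from the rest of $D(u)\cup D(v)$, so that the candidate values on the connected components of $\Omega \setminus \Gamma^-$ can be chosen consistently; equivalently, every loop in $\Omega$ crosses any closed component of $\Gamma^-$ an even number of times, forcing the sum of signed jumps along any loop to vanish and thus ensuring path-independence of the construction.
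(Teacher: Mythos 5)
Your choice of subgradient and your verification that $p\in\partial J(\phi)$ and $-p\in\partial J(\psi)$ on the respective jump sets match the paper's argument, but the step you yourself flag as the main obstacle is a genuine gap, and the topological assumption you propose to close it is not part of the theorem's hypotheses. A piecewise constant $\psi$ with jump set \emph{exactly} $\Gamma^-$ and $[\psi]=[v]$ there need not exist: take $\Omega$ an annulus, $v=\chi_A$ for a concentric sub-annulus $A$, so that $D(v)$ consists of two non-contractible circles $C_1\cup C_2$, and suppose the sign of $[u][v]$ differs on the two circles so that $\Gamma^-=C_1$. A loop around the hole crosses $C_1$ once, so no single-valued function can jump by a nonzero constant across $C_1$ and nowhere else; your candidate $\chi_{G_i}$ does not exist because $C_1$ bounds no open subset of $\Omega$. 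Restricting to configurations where every component of $\Gamma^-$ bounds would prove a weaker theorem than the one stated.

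The paper closes exactly this hole with one additional idea: it does not insist that the decomposition piece (its $w$, your $\psi$) jump only on $\Gamma^-$. Instead $w$ is allowed an auxiliary jump set $\Theta$, placed at positive distance from $\Gamma$, $\Sigma$, and $\Upsilon$, on which $w$ transitions back (e.g.\ from $v$ to $0$); since $u$ has no jump on $\Theta$, the vector field $g$ is unconstrained there and can be chosen with $g\cdot n=-\operatorname{sign}([w])$ on $\Theta$, so that the extra contribution $2\int_\Theta\bigl(|[w]|+g\cdot n\,[w]\bigr)\,d\sigma$ vanishes identically. In other words, the same freedom in $\partial J(u)$ that you exploit on $\Upsilon$ must also be exploited on the transition set $\Theta$ of the decomposition; with that addition your argument goes through without any extra topological hypothesis. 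You should incorporate this: define $w$ as you define $\psi$ near $\Gamma^-$, but let it return to zero across some $\Theta$ away from all named sets, and extend your choice of $\tilde s$ by $\tilde s=-\operatorname{sign}([w])$ on $\Theta$.
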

\vspace*{-12pt}
\begin{proof}
Due to the definition of the infimal convolution and its nonnegativity it suffices to find $w \in BV(\Omega)$ and
$p = \nabla \cdot g \in \partial J(u)$ such that
$$ F(w) = J(v-w)+J(w) - \langle p, v-w\rangle + \langle p, w \rangle  $$
vanishes. 
For this sake we construct $w$ with jump set $\Gamma \cup \Upsilon \cup \Theta$ such that $\Theta$ has positive distance from $\Gamma$, $\Upsilon$, and $\Sigma$, such that
$w=0$ in a neighbourhood of $\Upsilon$ and of the part of $\Gamma$ where $u$ and $v$ have the same jump sign, and $w=v$ in a neighbourhood of the part of $\Gamma$ where $u$ and $v$ have the opposite jump sign.
It is well-known that for a piecewise constant $u$ the vector field $g$ needs to satisfy
$g\cdot n=$sign$([u])$ on $\Gamma \cup \Sigma$, but can be chosen arbitrarily (up to the constraint on the supremum norm) on $\Upsilon \cup \Theta$ to obtain a subgradient $p=\nabla \cdot g \in \partial J(u)$. Hence, let 
$$ g\cdot n = - \text{ sign}([w]) \qquad \text{on } \Theta \quad \text{ and } \quad g\cdot n = \text{ sign}([v]) \qquad \text{on } \Upsilon.$$

With the above choices, we find
$$ F(w) = \int_{\Gamma \cup \Upsilon} (|[v-w]| + |[w]| - g\cdot n [v-2w])~d\sigma 
+ 2 \int_\Theta ( |[w]|+ g \cdot n [w]).$$
The second integral vanishes due to the above properties of $g \cdot n$. To show that the first integral vanishes we can inspect the integrand on the different parts. On $\Upsilon$ the
integrand equals $|[v]|-g\cdot n [v]$, which vanishes because of the above choice of $g$. 
On the part of $\Gamma$ where $u$ and $v$ have opposite jump sign the same applies. On the part of $\Gamma$ with opposite jump sign the integrand equals 
$$|[v]|+g\cdot n [v]= |[v]| - |[v]| = 0. $$
Hence, we conclude $F(w) = 0$.
\end{proof}

\section{Color Bregman Iterations}
\label{sec:colBreg}

Our above analysis shows that in the case of TV regularization, having the same edge set and direction of the edges means having a common subgradient. Let us consider the case of reconstructing signals $u_i$ from noisy measurements $f_i$. Considering a regularization which encourages multiple channels to share the edge sets and their direction could therefore lead to a model of the form
\begin{equation}
	\sum_{i=1}^M \left( \frac{1}2 \Vert K_i u_i -f_i \Vert^2 + \alpha (J(u_i) - \langle p_0, u_i \rangle ) \right)\rightarrow \min_{u \in {\cal U}^M, ~ p_0 \in \cap_i \partial J(u_i)} 
\end{equation}
where $K_i$ denote linear operators which are specific to the measurements or the reconstruction task and $\alpha$ is a weight for the regularization term. Unfortunately, the constraint of a common subgradient leads to a highly degenerate set of constraints and even the existence of a minimizer for the above problem is unclear. Additionally, the above problem is nonconvex, such that we suggest to instead use an iterative procedure where each iteration is the solution of a convex minimization problem using the Bregman distances to all $u_i$ of the previous iteration as a regularization. 

\subsection{Iteration Schemes}
Let $K: {\cal U}^M \rightarrow {\cal Y}^M$ be a block diagonal operator given by $u \mapsto (K_i u_i)$. We propose to find the iterate $u^{k+1}$ as the vector of minimizers 
\begin{equation} \label{eq:primalupdate}
	u_i^{k+1} \in \text{arg}\min_{u_i \in {\cal U}} \left( \frac{1}2 \Vert K_i u_i  -f_i \Vert^2 + \alpha \sum_{j=1}^M w_{i,j} D_J^{p_j^k}(u_i,u_j^k) \right)
\end{equation}
for nonnegative weights $w_{i,j}$ with $ \sum_j w_{i,j} =1$. The optimality condition to the minimization problems \eqref{eq:primalupdate} can be written in vector form as 
\begin{equation}
	p^{k+1} = W p^k - \lambda K^* (K u^{k+1} - f), \qquad p_i^{k+1} \in \partial J(u_i^{k+1}),
	\label{eq:dualupdate}
\end{equation}
where we used $\lambda = \frac{1}\alpha$ and the entries of the matrix $W$ are $w_{i,j}$. Therefore, the rows of the weight matrix $W$ sum to one
\begin{equation}
	\sum_{j=1}^M w_{i,j} = 1 \qquad \forall~i=1,\ldots,M, \label{eq:Wnormalization}
\end{equation}
such that $(1,1,\ldots,1)^T$ is an eigenvector for $W$ with eigenvalue $1$.

The alternative scheme using infimal convolution of Bregman distances is given by
\begin{align} \label{eq:primalupdateinfconv}
	u_i^{k+1} \in \text{arg}\min_{u_i \in {\cal U}} \bigg( \frac{1}2 \Vert K_i u_i  -f_i \Vert^2 &+ \alpha w_{i,i} D_J^{p_i^k}(u_i,u_i^k)  \nonumber \\
	&+ \alpha \sum_{j=1,j\neq i}^M w_{i,j} ( D_J^{p_j^k}(u_i,u_j^k)
	\square D_J^{-p_j^k}(u_i,-u_j^k))  \bigg).
\end{align}
Note that we treat the diagonal term differently, namely without infimal convolution. This is natural however, since the different sign of jumps should only be relevant for different channels, while we want to use the information on the $i$-th channel itself as in the original Bregman iteration. 
Optimality conditions can be stated as above, and one observes that it is also important to use $D_J^{p_i^k}$ only in the diagonal term in order to really obtain an update equation for the subgradients.

Since the subgradients of the infimal-convolution are not very intuitive, we derive an alternative formulation using new auxiliary variables as in the definition of the infimal convolution
\begin{align} 
	u_i^{k+1} \in \text{arg}\min_{u_i \in {\cal U}} ~\min_{z_{ij} \in {\cal U}} 
	{\Big (} & \frac{1}2 \Vert K_i u_i  -f_i \Vert^2 + \alpha w_{i,i} D_J^{p_i^k}(u_i,u_i^k) + \nonumber \\ & \alpha \sum_{j=1,j\neq i}^M w_{i,j} ( J(u_i-z_{ij}) + J(z_{ij}) - \langle p_j^k, u_i- z_{ij} \rangle + \langle p_j^k, z_{ij} \rangle)  { \Big )}.
\end{align}
Now we can easily derive optimality with respect to $u_i$ and $z_{ij}$ 
\begin{align}
K_i^*(K_i u_i^{k+1} - f_i) + \alpha w_{i,i} ( p_i^{k+1} - p_i^k ) + \alpha \sum_{j \neq i} w_{i,j} (r_{ij}^{k+1} - p_j^k) &= 0, \label{infconvopt1}\\
2p_j^k - r_{ij}^{k+1} + g_{ij}^{k+1} &= 0, \label{infconvopt2} \\
p_i^{k+1} \in \partial J(u_i^{k+1}), ~ 
g_{ij}^{k+1} \in \partial J(z_{ij}^{k+1}), ~ 
r_{ij}^{k+1} \in \partial J(u_i^{k+1}-z_{ij}^{k+1}). & \label{infconvopt3}
\end{align}

\subsection{Stationary Solutions} The following subsection investigates possible stationary points of the two iterative schemes described above. Additionally to the results presented below, we include an explicit example for the behavior of color Bregman iteration in the case where $f_i = c_i K \nu$ for $\nu$ being a singular vector in the supplementary material to this manuscript. We refer the reader to \cite{benningBurger} for details regarding the concept of singular values for nonlinear variational methods.

\subsubsection{Color Bregman Iteration}
In the following we study potential stationary solutions of the iteration scheme, i.e. $(u,p) \in {\cal U}^M \times ({\cal U}^*)^M$ such that
\begin{equation}
	p = Wp - \lambda K^* (K u - f), \qquad p_i \in \partial J(u_i).
	\label{eq:stationary}
\end{equation}

Of particular interest are of course solutions with zero residual $K^*(Ku-f) = 0$, for which one needs $p=Wp$. This implies $p = p_1 e_1$, where $e_1$ is an eigenvector of $W$ with eigenvalue $1$. We are interested in the case of $e_1=(1,1,\ldots,1)^T$ being the unique positive normalized eigenvector, which is guaranteed e.g. by the Perron-Frobenius theorem if all $w_{ij}$ are positive. Then the above property would imply $p_i=p_1$ for all $i$ and we would have found a solution for which all channels share a common subgradient. 

In the general case there might however be different stationary solutions. If $W$ is symmetric, which appears to be the natural choice, then we have that $e_1^T$ is a right eigenvector of $W$, which implies
$$ e_1^T p = e_1^T W p - \lambda e_1^T K^*(Ku-f) = e_1^T p - \lambda e_1^T K^*(Ku-f) , $$
i.e. $ e_1^T K^*(Ku-f) = 0$. This means that the sum of the residuals is zero for any stationary solution, but not necessarily each residual itself. 	 

For $W$ symmetric being such that $WK^* = K^* W$ one can derive more detailed arguments. Note that the latter property is satisfied automatically in the prominent case of all operators $K_i$ being identical. In this case we obtain that for a stationary solution $(I-W)p$ is in the range of $K^*$, i.e. there exists $q \in {\cal Y}$ such that
$$ \alpha (I-W)q =  (f - Ku) , \qquad K_i^* q_i \in \partial J(u_i). $$
This relation is a saddle point condition for the functional
\begin{equation}
	{\cal S}(u,q) = \sum_{i=1}^M J(u_i) - \langle q, K u -f \rangle - \frac{\alpha}2 \langle (I-W) q, q \rangle. 
\end{equation}
Note that ${\cal S}$ is convex with respect to $u$, hence a natural condition would be concavity with respect to $q$. The latter is satisfied since under the above conditions (symmetry, nonnegative entries, row sums equal to one) the matrix $I-W$ is positive semidefinite.
Note that $e_1$ is always in the nullspace of $I-W$, hence it will not be positive definite.
This property of $W$, together with the saddle-point interpretation of the stationary solution, will also be crucial in the convergence analysis below. Notice that in case that there exists a solution of $Ku=f$ with $q = Wq$ and $p_i = K_i^* q_i \in \partial J(u_i)$, then $(u,q)$ is a saddle point of ${\cal S}$ (independent of $\alpha$). 
\subsubsection{Infimal Convolution Bregman Iteration}
Let us now look at possible stationary solutions of the iteration using infimal convolutions of Bregman distances, i.e. at solutions of the equations
\begin{align}
K_i^*(K_i u_i - f_i)+ \alpha \sum_{j \neq i} w_{i,j} (r_{ij}- p_j) &= 0, \label{infconvstat1}\\
2p_j - r_{ij} + g_{ij} &= 0, \label{infconvstat2} \\
p_i \in \partial J(u_i), ~ 
g_{ij} \in \partial J(z_{ij}), ~ 
r_{ij} \in \partial& J(u_i-z_{ij}).  \label{infconvstat3}
\end{align}
 Again we are particularly interested in the case of a zero residual  $K^*(K u - f)=0$. We can state
\begin{proposition}
A necessary and sufficient condition for a solution to \eqref{infconvstat1}, \eqref{infconvstat2} and \eqref{infconvstat3} to meet $K^*(K u - f)=0$ is that there exist $z_{ij}$ which meet
$$
D_J^{p_j}(u_i - z_{ij}, u_j) = 0, \qquad
D_J^{-p_j}( z_{ij}, -u_j) = 0. 
$$
\end{proposition}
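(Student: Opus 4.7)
The plan would be to reformulate the two vanishing–Bregman-distance conditions via Proposition \ref{prop:zerobregdist}, using that $J$ is one-homogeneous and that $\partial J(-u_j)=-\partial J(u_j)$. Under this translation, $D_J^{p_j}(u_i-z_{ij},u_j)=0$ is equivalent to $p_j\in\partial J(u_i-z_{ij})$, and $D_J^{-p_j}(z_{ij},-u_j)=0$ is equivalent to $-p_j\in\partial J(z_{ij})$. This converts the proposition into a purely algebraic statement about the subgradient system \eqref{infconvstat1}--\eqref{infconvstat3}.

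For sufficiency I would assume such $z_{ij}$ exist and then set $r_{ij}:=p_j$ and $g_{ij}:=-p_j$. By construction these belong to $\partial J(u_i-z_{ij})$ and $\partial J(z_{ij})$ respectively, so \eqref{infconvstat3} holds, and \eqref{infconvstat2} reduces to the trivial identity $2p_j-p_j+(-p_j)=0$. Substituting $r_{ij}-p_j=0$ into \eqref{infconvstat1} immediately yields $K_i^*(K_iu_i-f_i)=0$ for every $i$, i.e.\ zero residual.

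For necessity I would work from a stationary solution with $K^*(Ku-f)=0$. Then \eqref{infconvstat1} gives $\sum_{j\neq i} w_{i,j}(r_{ij}-p_j)=0$, and \eqref{infconvstat2} rewrites this as $\sum_{j\neq i}w_{i,j}(p_j+g_{ij})=0$. The key algebraic step is to evaluate the two Bregman distances at the existing $z_{ij}$: using one-homogeneity together with $r_{ij}\in\partial J(u_i-z_{ij})$ and $g_{ij}\in\partial J(z_{ij})$ to replace $J(u_i-z_{ij})$ by $\langle r_{ij},u_i-z_{ij}\rangle$ and $J(z_{ij})$ by $\langle g_{ij},z_{ij}\rangle$, and then using \eqref{infconvstat2} to cancel every $z_{ij}$ contribution, I expect to obtain the clean identity
\begin{equation*}
D_J^{p_j}(u_i-z_{ij},u_j)+D_J^{-p_j}(z_{ij},-u_j)=\langle p_j+g_{ij},u_i\rangle.
\end{equation*}

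To close the argument I would multiply this identity by $w_{i,j}$ and sum over $j\neq i$; the right-hand side is zero by the dual relation above, while each summand on the left is a nonnegative combination of two Bregman distances with $w_{i,j}\ge 0$. Hence every term must individually vanish, so both Bregman distances at $z_{ij}$ are zero whenever $w_{i,j}>0$; the remaining indices do not appear in the iteration and $z_{ij}$ can be chosen freely there, so existence is automatic. The main obstacle I anticipate is the bookkeeping in the algebraic reduction that produces the identity $\langle p_j+g_{ij},u_i\rangle$: one has to apply one-homogeneity in the form $J(w)=\langle s,w\rangle$ for $s\in\partial J(w)$ twice, and then use \eqref{infconvstat2} precisely to kill every $z_{ij}$-coupling, otherwise the sign of the resulting expression is obscured and the nonnegativity of the summands cannot be exploited.
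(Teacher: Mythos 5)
Your proposal is correct and follows essentially the same route as the paper: the sufficiency argument (choosing $r_{ij}=p_j$, $g_{ij}=-p_j$) is identical, and your necessity computation is the paper's own identity $\frac{1}{\alpha}\langle K_i^*(f_i-K_iu_i),u_i\rangle=\sum_{j\neq i}w_{i,j}\bigl(D_J^{p_j}(u_i-z_{ij},u_j)+D_J^{-p_j}(z_{ij},-u_j)\bigr)$ read in the opposite direction, using the same splitting $u_i=(u_i-z_{ij})+z_{ij}$, the same substitution $r_{ij}-p_j=g_{ij}+p_j$ from \eqref{infconvstat2}, and the same nonnegativity argument. Your explicit remark about indices with $w_{i,j}=0$ is a point the paper glosses over, but otherwise the two proofs coincide.
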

\vspace*{-12pt}
\begin{proof}
The above condition is sufficient since it allows us to choose $r_{ij} = p_j \in \partial J(z_{ij})$ and $g_{ij}=-p_j  \in \partial J(u_i - z_{ij})$, such that \eqref{infconvstat1} immediately yields $K_i^*(K_i u_i - f_i)=0$ for all $i$. To see that the above condition is necessary, we consider the dual product 
\begin{align*}
\frac{1}{\alpha} \langle K_i^*(f_i - K_iu_i) , u_i \rangle &=  \langle \sum_{j \neq i} w_{i,j} (r_{ij}- p_j), u_i \rangle =  \sum_{j \neq i} w_{i,j} \langle  r_{ij}- p_j, u_i - z_{ij} + z_{ij}\rangle, \\
 &=  \sum_{j \neq i} w_{i,j} \bigg( \underbrace{\langle  r_{ij}, u_i - z_{ij} \rangle - \langle p_j, u_i - z_{ij} \rangle}_{= D_J^{p_j}(u_i-z_{ij}, u_j)}  + \langle  \underbrace{r_{ij}- p_j}_{=g_{ij} + p_j}, z_{ij}\rangle  \bigg), \\
 &=  \sum_{j \neq i} w_{i,j} \bigg( D_J^{p_j}(u_i-z_{ij}, u_j)  + \underbrace{\langle  g_{ij} , z_{ij}\rangle - \langle -p_j, z_{ij} \rangle}_{=D_J^{-p_j}(z_{ij}, -u_j)}  \bigg).
\end{align*}
Obviously, the above being equal to zero is necessary for $K^*(K u - f)=0$. Due to the nonnegativity of the Bregman distance, the above sum can only be equal to zero if each summand is zero, which yields the assertion.
\end{proof}

As we can see, any stationary point with zero residual of the color Bregman iteration also is a stationary point with zero residual of the infimal convolution Bregman iteration. Additionally, we gain all points that can be decomposed into a part with a zero Bregman distance to $u_j$ and a part with zero Bregman distance to $-u_j$. Particularly, we can see the independence of the sign of the $p_j$. By choosing $z_{ij}=u_i$ or $z_{ij}=0$, we see that any point with zero residual and a subgradient $p$ such that for each $i$ either $p \in \partial J(u_i)$ or $-p \in \partial J(u_i)$ holds, is a stationary point of the infimal convolution Bregman iteration.

\subsection{Primal-Dual and Dual Formulation}
\label{sec:primaldual}
Under the condition introduced in the last section, i.e. $K^* W = W K^*$, we can employ the saddle-point structure to provide alternative formulations of the iterative scheme \eqref{eq:dualupdate}. Such a reformulation is also well-known in the case of the original Bregman iteration and can be interpreted as an Augmented Lagrangian method.   For initial values of $p$ in the range of $K^*$ it is straight-forward to see that $p^k = K^* q^k$ and we can rephrase \eqref{eq:dualupdate} as
\begin{equation}
	q^{k+1} = W q^k - \lambda (K u^{k+1} - f), \qquad K_i^* q_i^{k+1} \in \partial J(u_i^{k+1}).
	 \label{eq:primaldual}
\end{equation}
With ${\cal S}$ defined as above, these conditions can be reinterpreted as
\begin{align}
  	u^{k+1} &\in \text{arg}\min_u  {\cal S}(u,q^{k+1}), \\
	q^{k+1} &= q^k + \partial_q {\cal S}(u^{k+1},q^k). \label{qupdate}
\end{align}
Hence, the scheme is minimizing with respect to $u$ at fixed $q$, while $q^{k+1}$ is obtained with an ascent step with respect to $q$. 

We observe that the primal-dual iteration \eqref{eq:primaldual} can be considered as an alternative in the case $K^* W \neq W K^*$. Eliminating $q^{k+1}$ it can be rephrased as
$$  K_i^* (W q^k)_i - \lambda K_i^*(K u^{k+1} - f)_i \in \partial J(u_i^{k+1}), $$
which is the optimality condition for the variational problem
\begin{equation}
	u^{k+1} \in \text{arg}\min_{u \in {\cal U}^M} \left( \frac{\lambda}2 \norm{K u -f}^2 + \sum_i J(u_i) - \langle W q^k, K u - f \rangle \right).  \label{eq:alternativeiteration}
\end{equation}
 
Finally we shall also state a dual form. We will give a purely formal derivation here and not justify the duality, since we will use the dual formulation as a motivation of the convergence analysis below only, where we are interested in the convergence of $u^k$ as well. Under appropriate regularity $p_i \in \partial J(u_i)$ can be reformulated to $u_i \in \partial J^*(p_i)$ with $J^*$ being the convex conjugate of $J$. Note that $u_i^{k+1} \in \partial J^*(K_i^* q_i^{k+1})$ implies 
$K u_i^{k+1} \in \partial_q J^*(K_i^* q_i^{k+1})$. Hence, the iteration 
$$ q^{k+1} + K u^{k+1} - f =  q^k - \alpha (I-W) q^k  $$ 
can also be interpreted as a forward-backward splitting method on the dual functional
\begin{equation}
	D(q) = \frac{\alpha}2 \langle (I-W) q,q \rangle - \langle f, q \rangle + \sum_i J^*(K_i^* q_i),
\end{equation}
where the forward splitting is applied to the first term and the backward splitting to the other two.

\subsection{Well-Definedness}

We assume that ${\cal U}$ is a Banach space being the dual of some Banach space ${\cal V}$ and that
${\cal R}(K_i^*) \subset {\cal V}$ for all $i=1,\ldots,M$. This is satisfied e.g. if $K_i$ is the adjoint of some operator $L_i$ mapping to ${\cal V}$. 

\begin{lemma}
\label{lem:minimizerExists}
Let the sublevel sets of $u \mapsto \Vert K_i u \Vert ^2 + J(u)$ be bounded in ${\cal U}$ and let 
$H: {\cal U} \rightarrow \R \cup \{+\infty\}$ be a proper nonnegative convex functional. Then for each $q \in {\cal Y}$ and each $\epsilon > 0$ there exists a solution of the variational problem
\begin{equation}
\label{eq:minimizerExists}
\frac{1}2 \Vert K_i u_i - f_i \Vert^2 + \epsilon J(u_i) - \langle K_i^* q, u_i \rangle + H(u_i) \rightarrow \min_{u_i \in {\cal U}} . 
\end{equation}
Moreover, the solution is unique if $K_i$ is injective. 
\end{lemma}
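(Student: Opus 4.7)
The plan is to use the direct method of the calculus of variations, exploiting the Banach-Alaoglu theorem since $\mathcal{U}$ is a dual space. The first step will be to show that the functional is bounded below. All terms are nonnegative except the linear pairing $-\langle K_i^* q, u_i\rangle = -\langle q, K_i u_i\rangle_{\mathcal{Y}}$, which I would control using Young's inequality: writing $\|K_i u_i\|\leq \|K_i u_i -f_i\|+\|f_i\|$, one obtains an estimate of the form $|\langle q,K_i u_i\rangle|\leq \tfrac14\|K_i u_i-f_i\|^2 +C(q,f_i)$, so that the $\tfrac12\|K_i u_i-f_i\|^2$ term dominates. This already yields $\inf>-\infty$.

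Next I would pick a minimizing sequence $(u_n)\subset\mathcal{U}$. The bound on the functional, combined with $H(u_n)\geq 0$ and $J(u_n)\geq 0$ and the Young-inequality estimate, first gives a bound on $\|K_i u_n\|$, and then, re-inserting this, a bound on $\epsilon J(u_n)$. The hypothesis that the sublevel sets of $u\mapsto \|K_i u\|^2 + J(u)$ are bounded in $\mathcal{U}$ then yields boundedness of $(u_n)$ in $\mathcal{U}$. Since $\mathcal{U}=\mathcal{V}^*$, Banach--Alaoglu provides a weak-$*$ convergent subsequence $u_{n_k}\overset{*}{\rightharpoonup}\bar u$.

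The crux is passing to the limit term-by-term. The linear term is weak-$*$ continuous precisely because $K_i^* q\in\mathcal{V}$, so $\langle K_i^* q,u_{n_k}\rangle\to \langle K_i^* q,\bar u\rangle$; this is exactly what the assumption $\mathcal{R}(K_i^*)\subset \mathcal{V}$ is designed to ensure. For the data term, the identification $K_i=L_i^*$ (with $L_i:\mathcal{Y}^*\to\mathcal{V}$ the pre-adjoint implicit in $\mathcal{R}(K_i^*)\subset\mathcal{V}$) makes $K_i$ weak-$*$ to weak continuous into $\mathcal{Y}$, so $\tfrac12\|K_i u-f_i\|^2$ is weak-$*$ lsc. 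The regularizer $J=TV$ is weak-$*$ lsc by a standard argument, and the same (weak-$*$ lsc) is inherited by the convex functional $H$ from its convexity together with properness (one applies Mazur on strongly closed sublevel sets and the predual structure). Weak-$*$ lower semicontinuity of each summand then gives that $\bar u$ attains the infimum.

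For uniqueness under injectivity of $K_i$, note that $u\mapsto \tfrac12\|K_i u-f_i\|^2$ is then strictly convex and the remaining terms are convex, so the whole objective is strictly convex, ruling out two distinct minimizers. The main obstacle I expect is the weak-$*$ lower semicontinuity of $H$: the hypothesis only lists ``proper nonnegative convex,'' so either one must tacitly assume weak-$*$ closedness of $H$, or exploit the specific forms of $H$ that actually arise in the Bregman-iteration schemes (sums of TV-type terms, which are weak-$*$ lsc), to make the argument rigorous.
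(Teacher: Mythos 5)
Your proof is correct and follows essentially the same route as the paper's: the direct method with Banach--Alaoglu on the dual space ${\cal U}={\cal V}^*$, boundedness of sublevel sets via the nonnegativity of $H$ and the coercivity hypothesis, and weak-$*$ lower semicontinuity of the convex objective. The only cosmetic difference is that the paper absorbs the linear term by completing the square, rewriting the objective as $\tfrac12 \Vert K_i u_i - f_i - q\Vert^2 + \epsilon J(u_i) + H(u_i)$ up to a constant, rather than invoking Young's inequality; your caution about the weak-$*$ lower semicontinuity of $H$ is well placed, since the paper simply asserts it from convexity.
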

\begin{proof}
By adding a constant independent of $u_i$ we can rewrite the minimization equivalently as 
\begin{equation}
\frac{1}2 \Vert K_i u_i - f_i - q \Vert^2 + \epsilon J(u_i) + H(u_i) \rightarrow \min_{u_i \in {\cal U}} . 
\end{equation}
This functional is convex and hence weak-* lower semicontinuous on bounded sets. Moreover, the nonnegativity of $H$ implies an upper bound on $\Vert K_i u_i \Vert ^2 + J(u)$ on each sublevel set of the functional. Thus, the latter are bounded and thus weak-* compact by the Banach-Alaoglu theorem. These two properties yield the existence of a minimizer by a standard argument. 
\end{proof} 

\begin{theorem}
Let the sublevel sets of $u \mapsto \Vert K_iu \Vert ^2 + J(u)$ be bounded in ${\cal U}$, let 
$p_i^0 =K_i^* q_i^0  \in \partial J(u_i^0)$. Then, the sequence of iterates $(u_i^k,p_i^k)$ defined by
\eqref{eq:primalupdate} is well-defined and satisfies \eqref{eq:dualupdate}.
\end{theorem}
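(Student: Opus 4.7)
The plan is to argue by induction on $k$, using the hypothesis and Lemma \ref{lem:minimizerExists} as the backbone. The invariant I would carry along is that for every channel $j$ the subgradient $p_j^k \in \partial J(u_j^k)$ lies in the predual ${\cal V}$. This invariant is exactly what makes the linear part of the next subproblem weak-* continuous and therefore puts the subproblem inside the scope of (essentially) Lemma \ref{lem:minimizerExists}. The base case is immediate from the hypothesis $p_i^0 = K_i^* q_i^0$ together with ${\cal R}(K_i^*) \subset {\cal V}$.

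For the inductive step I would first simplify the $i$-th subproblem. Expanding $D_J^{p_j^k}(u_i, u_j^k) = J(u_i) - J(u_j^k) - \langle p_j^k, u_i - u_j^k\rangle$ and using $\sum_j w_{i,j} = 1$, everything that depends on $u_i$ collapses, up to constants, to
$$\tfrac{1}{2}\Vert K_i u_i - f_i \Vert^2 + \alpha J(u_i) - \alpha \langle \tilde p_i^k, u_i \rangle, \qquad \tilde p_i^k := \sum_j w_{i,j} p_j^k .$$
The induction hypothesis together with the fact that ${\cal V}$ is a linear subspace of ${\cal U}^*$ gives $\tilde p_i^k \in {\cal V}$, so the linear correction is weak-* continuous in $u_i$. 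The proof of Lemma \ref{lem:minimizerExists} only uses the linear term through its weak-* continuity (not through the specific form $K_i^* q$), so the same compactness/lower-semicontinuity argument produces a minimizer $u_i^{k+1}$.

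Next I would write the Euler-Lagrange condition of this convex, proper, weak-* lower semicontinuous subproblem, which yields the existence of some $p_i^{k+1} \in \partial J(u_i^{k+1})$ with
$$K_i^*(K_i u_i^{k+1} - f_i) + \alpha (p_i^{k+1} - \tilde p_i^k) = 0 .$$
Collecting this across $i = 1,\ldots,M$ and identifying $\lambda = 1/\alpha$ reproduces the block form of \eqref{eq:dualupdate}. To close the induction I would observe that the same identity writes $p_i^{k+1}$ as the sum of $\tilde p_i^k \in {\cal V}$ and an element of ${\cal R}(K_i^*) \subset {\cal V}$, hence $p_i^{k+1} \in {\cal V}$ as required for the next round.

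The main obstacle I anticipate is the ${\cal V}$-stability itself: the iterates do not in general live inside ${\cal R}(K_i^*)$ (this would require a commutativity hypothesis such as $K^*W = WK^*$ discussed in Section \ref{sec:primaldual}), so Lemma \ref{lem:minimizerExists} as literally stated does not apply to the subproblem. The resolution is the observation above — only weak-* continuity of the linear term is used in the lemma's proof, and both ${\cal R}(K_i^*)$ and convex combinations of elements of ${\cal V}$ preserve this property — so the inductive invariant propagates without extra assumptions on $W$ or on the $K_i$.
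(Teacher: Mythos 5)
Your skeleton --- induction, reduction to Lemma \ref{lem:minimizerExists}, Euler--Lagrange condition, identification $\lambda = 1/\alpha$ --- is exactly the paper's, and your derivation of \eqref{eq:dualupdate} from the optimality condition is fine. The genuine gap is in the existence step. You claim the proof of Lemma \ref{lem:minimizerExists} uses the linear term only through its weak-* continuity, so that replacing $\langle K_i^* q, u_i\rangle$ by $\langle \tilde p_i^k, u_i\rangle$ with $\tilde p_i^k \in {\cal V}$ is harmless. That is not what the lemma's proof does: its first move is to complete the square, rewriting $\tfrac12\Vert K_i u_i - f_i\Vert^2 - \langle K_i^* q, u_i\rangle$ as $\tfrac12\Vert K_i u_i - f_i - q\Vert^2 + \mathrm{const}$, and it is this step that converts sublevel sets of the objective into sublevel sets of $\Vert K_i\cdot\Vert^2 + \epsilon J + H$ and hence into bounded, weak-* compact sets. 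For a generic $\tilde p \in {\cal V}$ this coercivity fails: $-\alpha\langle \tilde p, u_i\rangle$ can cancel $\alpha J(u_i)$ exactly along a ray $tv$ with $v \in \ker K_i$ and $\tilde p \in \partial J(v)$ (take $J = \Vert\cdot\Vert_1$ and $|\tilde p(m)| = 1$ for some coordinate $m$ with $e_m \in \ker K_i$; then the collapsed functional is constant along the unbounded ray $t\,\mathrm{sign}(\tilde p(m))e_m$), so the direct method stalls. Weak-* continuity buys you lower semicontinuity, not coercivity. The information you discard by collapsing is precisely what rescues the argument: each off-diagonal summand $w_{i,j}(J(u_i) - \langle p_j^k, u_i\rangle)$ is a \emph{nonnegative} Bregman distance (since $p_j^k \in \partial J(u_j^k)$ and $J$ is one-homogeneous), and the paper feeds exactly these into the slot $H$ of Lemma \ref{lem:minimizerExists}, keeping only the diagonal term --- whose linear part $\langle K_i^* q_i^k, u_i\rangle = \langle q_i^k, K_i u_i\rangle$ vanishes on $\ker K_i$ --- in the coercive position $\epsilon J(u_i) - \langle K_i^* q, u_i\rangle$.

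Your side remark about range-stability is a fair observation: the paper's ``induction yields the result'' is terse precisely where $p_i^{k+1} = \sum_j w_{i,j} p_j^k - \alpha^{-1}K_i^*(K_i u_i^{k+1} - f_i)$ fails to land back in ${\cal R}(K_i^*)$ when the $K_j$ differ. But weakening the invariant to $p_j^k \in {\cal V}$ throws away too much, for the reason above: membership in ${\cal V}$ neither makes the linear term dominated by $J$ nor controlled by $\Vert K_i u_i\Vert$. What has to be carried along is (i) $p_j^k \in \partial J(u_j^k)$ for every $j$, so the off-diagonal terms stay nonnegative and can be parked in $H$, and (ii) the diagonal subgradient $p_i^k \in {\cal R}(K_i^*)$ (together with $w_{i,i} > 0$), which supplies the coercive part; (ii) does propagate in the prominent case of identical operators, which is where the induction closes cleanly without further assumptions.
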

\begin{proof}
Let $p_i^{k} =K_i^* q_i^{k}  \in \partial J(u_i^{k})$ be met (holds for $k=0$ by the conditions of the theorem). We determine the next iterates  $(u_i^{k+1,}p_i^{k+1})$ by \eqref{eq:primalupdate}. By defining $H(u_i) = \alpha \sum_{j\neq i} w_{i,j} D_J^{p_j^k}(u_i,u_j^k)$, which is proper, convex and nonnegative), we see that each minimization problem for the $u_i$ has the form of \eqref{eq:minimizerExists} with $\epsilon = \alpha$ such that lemma \ref{lem:minimizerExists} guarantees the existence of a solution. The optimality conditions yields
\begin{align}
K_i^*(K_i u_i^{k+1} - f_i) + \alpha p_i^{k+1} - \alpha \sum_j w_{i,j}  p_j^{k} = 0
\end{align} 
for $p_i^{k+1} \in \partial J(u_i^{k+1})$, such that \eqref{eq:dualupdate} holds for $\lambda = 1/ \alpha$. Induction yields the result for all $k$. 
\end{proof}

In a similar way the Bregman iteration with infimal convolutions can be analyzed, noticing that
by the properties of the infimal convolutions the variational problem in each iteration can still be 
cast in the form of Lemma \ref{lem:minimizerExists} if $\epsilon:=\alpha w_{i,i} > 0$.
\begin{theorem}
Let the sublevel sets of $u \mapsto \Vert K_iu \Vert ^2 + J(u)$ be bounded in ${\cal U}$, let 
$p_i^0 =K_i^* q_i^0  \in \partial J(u_i^0)$, and $w_{i,i} > 0$ for all $i$. Then, the sequence of iterates $(u_i^k,p_i^k)$ defined by
\eqref{eq:primalupdateinfconv} is well-defined.
\end{theorem}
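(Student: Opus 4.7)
The plan is to mirror the proof of the preceding theorem, with the extra work concentrated on recognising that the new, infimal-convolution regulariser still fits the template of Lemma \ref{lem:minimizerExists}. I would proceed by induction on $k$. The base case is given by the hypothesis $p_i^0 = K_i^*q_i^0 \in \partial J(u_i^0)$. Assume the iterates up to step $k$ have been constructed and that $p_i^k$ lies in the range of $K_i^*$, so that $p_i^k = K_i^* q_i^k$ for some $q_i^k$.

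For the induction step, I would split off the diagonal term in \eqref{eq:primalupdateinfconv}. Using $D_J^{p_i^k}(u_i,u_i^k) = J(u_i) - \langle p_i^k,u_i\rangle + \mathrm{const}$, the objective becomes
$$\tfrac12\|K_i u_i - f_i\|^2 + \alpha w_{i,i} J(u_i) - \langle \alpha w_{i,i} K_i^* q_i^k,\, u_i\rangle + H(u_i) + \mathrm{const},$$
with
$$H(u_i) := \alpha \sum_{j\neq i} w_{i,j}\bigl(D_J^{p_j^k}(u_i,u_j^k)\ \square\ D_J^{-p_j^k}(u_i,-u_j^k)\bigr).$$
I would then check the three properties of $H$ demanded by Lemma \ref{lem:minimizerExists}: (i) convexity, since the Bregman distance is convex in its first argument and infimal convolution of convex functionals is convex, hence so is any non-negative linear combination; (ii) non-negativity, since each term in the infimum of an infimal convolution of non-negative functionals is non-negative; (iii) properness, by evaluating the infimum at the admissible splitting $\phi = u_i$, $\psi = 0$, which yields the finite upper bound $\alpha\sum_{j\neq i} w_{i,j} D_J^{p_j^k}(u_i,u_j^k)$ whenever $u_i \in \mathrm{dom}(J)$. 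Lemma \ref{lem:minimizerExists} then applies with $\epsilon = \alpha w_{i,i}>0$ (this is exactly where the assumption $w_{i,i}>0$ is used) and $q = \alpha w_{i,i} q_i^k$, producing the minimizer $u_i^{k+1}$.

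To close the induction I would extract the new dual variables from the optimality system \eqref{infconvopt1}--\eqref{infconvopt3}: solving \eqref{infconvopt1} for $p_i^{k+1}$ expresses it as $p_i^k$ minus $(\alpha w_{i,i})^{-1} K_i^*(K_i u_i^{k+1}-f_i)$ plus a correction built from the infimal-convolution multipliers $r_{ij}^{k+1}$ and $p_j^k$. The main obstacle I expect is precisely the propagation of the range condition $p_i^{k+1}\in \mathcal R(K_i^*)$ needed to re-apply Lemma \ref{lem:minimizerExists} at step $k+1$: while $p_i^k$ and $K_i^*(K_i u_i^{k+1}-f_i)$ are safely in $\mathcal R(K_i^*)$, the subgradients $r_{ij}^{k+1}$ of the auxiliary decompositions have no a priori structure with respect to $K_i^*$. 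If all $K_i$ coincide this is automatic; otherwise I would argue that $u_i^{k+1}$ itself exists regardless, and note that the well-definedness claim of the theorem is really a statement about existence of a minimizer at each step, which is secured directly by the coercivity argument above using non-negativity of $H$ and the completion-of-squares that absorbs the diagonal linear term into the data-fidelity.
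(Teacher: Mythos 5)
Your proposal is correct and follows exactly the route the paper intends: the paper's entire justification for this theorem is the one-sentence remark preceding it, namely that the off-diagonal infimal-convolution terms form a proper, convex, nonnegative $H$ while the diagonal term supplies $\epsilon = \alpha w_{i,i} > 0$ and the linear part $K_i^*(\alpha w_{i,i} q_i^k)$ in Lemma \ref{lem:minimizerExists}, which is precisely what you verify. The range-propagation issue you flag at the end is a genuine subtlety, but it is equally unaddressed in the paper's own proofs of both well-definedness theorems (it is resolved only when the operators coincide or $K^*W = WK^*$, cf.\ the discussion around \eqref{eq:primaldual}), so it is not a defect of your argument relative to the paper's.
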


\section{Convergence Analysis}
\label{sec:convergenceanalysis}

In the following we discuss the convergence analysis of the iteration scheme (setting $\lambda = 1$ for simpler notation)
\begin{equation}
	q^{k+1} = W q^k + f - K u^{k+1}, \qquad K_i^* q_i^{k+1} \in \partial J(u_i^{k+1}).
	\label{eq:finaliteration}
\end{equation}
Throughout the whole section we shall assume that
\begin{itemize}
	
\item $W$ is symmetric with nonnegative entries and row sums equal to one.

\item $W$ has a simple maximal eigenvalue $\lambda_1=1$.

\item The initial values satisfy $p_i^0=K^*q_i^0 \in \partial J(u_i^0)$.	

\end{itemize}
\ \\
We shall denote by $e_1=\frac{1}{\sqrt{M}}(1,1,\ldots,1)^T \in \R^M$ the normalized positive eigenvector corresponding to $\lambda_1$ and by $Q_1 \in \R^{M\times M}$ the projection matrix on the orthogonal space to $e_1$. 

\subsection{Auxiliary Estimates}

We start with some estimates on quantities potentially dissipated by the iteration. Those are norms of $q^k$ as well as Bregman distances between $u^k$ and some potential final state $\overline{u}$.

\begin{proposition} \label{dissprop1}
Let $(u^k,q^k)$ be a sequence generated by \eqref{eq:finaliteration}. Then the estimate
\begin{align}
&	\Vert Q_1 (q^{k+1} - q^k) \Vert^2 + \Vert e_1^T (q^{k+1} - q^k) \Vert^2  + 2D^{k,k+1}_{\text{symm}} + \norm{K(u^{k+1}-u^k)}^2 \nonumber \\ & \qquad \leq \lambda_2^2 \norm{Q_1(q^k-q^{k-1})}^2 + \Vert e_1^T (q^{k} - q^{k-1}) \Vert^2
	\label{eq:dissipation1} 
\end{align}
holds,
where $D^{k,k+1}_{\text{symm}}$ denotes the symmetric Bregman distance
\begin{equation}
	D^{k,k+1}_{\text{symm}} = \langle p^{k+1}- p^k, u^{k+1} - u^k \rangle,
\end{equation}
and $\lambda_2$ is the second largest eigenvalue of W.
\end{proposition}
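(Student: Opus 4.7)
The plan is to derive the estimate from a single telescoping identity obtained by differencing the iteration equation at two consecutive steps, followed by an eigenvalue decomposition of $W$ along $e_1$ and $Q_1$.

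First I would write the update rule at indices $k$ and $k-1$,
$$q^{k+1}-W q^k = f - K u^{k+1}, \qquad q^k - W q^{k-1} = f - K u^k,$$
and subtract to obtain the key recursion
$$\eta^k - W \eta^{k-1} = -K(u^{k+1}-u^k), \qquad \eta^k := q^{k+1}-q^k.$$
Taking the squared norm of both sides yields
$$\|\eta^k\|^2 - 2\langle \eta^k, W \eta^{k-1}\rangle + \|W\eta^{k-1}\|^2 = \|K(u^{k+1}-u^k)\|^2.$$

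Next I would compute the cross term by testing the same recursion against $\eta^k$ itself and using that $K^* \eta^k = p^{k+1}-p^k$ with $p_i^{k+1}\in\partial J(u_i^{k+1})$: the right-hand side becomes $-\langle K^*\eta^k, u^{k+1}-u^k\rangle = -D^{k,k+1}_{\text{symm}}$, so
$$\langle \eta^k, W\eta^{k-1}\rangle = \|\eta^k\|^2 + D^{k,k+1}_{\text{symm}}.$$
Substituting this into the expanded squared-norm identity and rearranging gives the clean identity
$$\|\eta^k\|^2 + 2 D^{k,k+1}_{\text{symm}} + \|K(u^{k+1}-u^k)\|^2 = \|W \eta^{k-1}\|^2.$$

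Finally, I would split the $\R^M$-valued dependence via the symmetric spectral structure of $W$. Viewing $W$ as acting coordinatewise on the channel index of a ${\cal Y}^M$-valued vector, I expand $\eta^{k-1}$ in an orthonormal eigenbasis $e_1,\phi_2,\ldots,\phi_M$ of $W$ with eigenvalues $1=\lambda_1>\lambda_2\geq \ldots$, which yields
$$\|\eta^{k-1}\|^2 = \|e_1^T \eta^{k-1}\|^2 + \|Q_1 \eta^{k-1}\|^2, \qquad \|W\eta^{k-1}\|^2 \leq \|e_1^T \eta^{k-1}\|^2 + \lambda_2^2 \|Q_1 \eta^{k-1}\|^2,$$
and the analogous orthogonal decomposition for $\eta^k$ on the left. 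Combining gives exactly \eqref{eq:dissipation1}.

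The main obstacle is the bookkeeping to make sense of $W$ acting on a ${\cal Y}^M$-valued quantity; the cleanest way is to note that $W\otimes \mathrm{Id}_{\cal Y}$ still commutes with the orthogonal projection $Q_1$ and admits the same spectral bound $\lambda_2$ on its range, so the scalar eigenvalue estimate transfers verbatim. The rest of the argument is linear-algebraic manipulation, with the one conceptual step being the recognition that the cross term between the $q$-differences and the $u$-differences is precisely the symmetric Bregman distance, which is what allows the dissipation structure to close.
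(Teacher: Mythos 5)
Your proposal is correct and follows essentially the same route as the paper: difference the iteration at consecutive indices, expand the squared norm so that the cross term $\langle K^*(q^{k+1}-q^k),u^{k+1}-u^k\rangle$ is recognized as the symmetric Bregman distance, and then split along $e_1$ and its orthogonal complement using the spectral bound $\lambda_2$ on the range of $Q_1$. The only cosmetic difference is that you keep $W\eta^{k-1}$ on the left and compute the cross term by testing the recursion a second time, whereas the paper expands $\Vert q^{k+1}-q^k+K(u^{k+1}-u^k)\Vert^2=\Vert W(q^k-q^{k-1})\Vert^2$ directly; the resulting identity is the same.
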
 
\begin{proof}
Subtracting \eqref{eq:finaliteration} for index $k$ and $k-1$ we obtain
$$ q^{k+1} - q^k  + K(u^{k+1}-u^k) = W(q^k-q^{k-1}) . $$
Taking the squared norm in the Hilbert space ${\cal Y}$ and expanding the square on the 
left-hand side yields 
\begin{equation}
\label{eq:qnorms}
 \norm{q^{k+1} - q^k}^2  + 2D^{k,k+1}_{\text{symm}} + \norm{K(u^{k+1}-u^k)}^2 = \norm{W(q^k-q^{k-1})}^2 . 
 \end{equation}
Now we can further expand the squared norms in $q$ into the parts in the subspaces spanned by $e_1$ and orthogonal to $e_1$, respectively. Finally, the standard spectral estimate
$$ \norm{Q_1 W(q^k-q^{k-1})} \leq \lambda_2 \norm{Q_1(q^k-q^{k-1})} $$ 
yields \eqref{eq:dissipation1}.
\end{proof}

As a direct consequence of \eqref{eq:dissipation1} we conclude after summation
\begin{equation}
	\Vert q^{k+1} - q^k  \Vert^2 + \sum_{j=1}^k \norm{K(u^{j+1}-u^j)}^2 + (1-\lambda_2^2) \sum_{j=1}^k \norm{Q_1(q^j-q^{j-1})}^2 
\leq \norm{q^1-q^0}^2 .
\end{equation} 
From the finiteness of the second sum and equation (\ref{eq:finaliteration}) we can immediately conclude
\begin{equation}
 Q_1((W-I) q^k + f - Ku^{k+1} )\rightarrow 0 	. 
\end{equation}
Let us define the residual at the $k$-th iteration as
\begin{equation}
\label{eq:residualdef}
r^k = (W-I)q^k + f - Ku^k.
\end{equation}
We can state:
\begin{corollary}
\label{cor:monoton1}
The quantities $\|r^k\|$ and $\|q^{k+1} - q^k\|$ are monotonically decreasing.
\end{corollary}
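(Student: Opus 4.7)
The plan is to first extract the algebraic identity
$$ r^k = W(q^k - q^{k-1}), $$
which will serve as the bridge between the two sequences whose monotonicity I want to establish. This identity follows at once by reading $f - Ku^k = q^k - Wq^{k-1}$ off of the iteration \eqref{eq:finaliteration} at step $k$ and inserting it into the definition \eqref{eq:residualdef}, upon which the expression collapses to $Wq^k - Wq^{k-1}$.

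Next I will recall the identity \eqref{eq:qnorms} already established inside the proof of Proposition \ref{dissprop1}, namely
$$ \|q^{k+1}-q^k\|^2 + 2D^{k,k+1}_{\text{symm}} + \|K(u^{k+1}-u^k)\|^2 = \|W(q^k-q^{k-1})\|^2. $$
By the identity of the preceding paragraph the right-hand side is exactly $\|r^k\|^2$. Convexity of $J$ gives $D^{k,k+1}_{\text{symm}} \geq 0$ and the $K$-term is nonnegative, so dropping them yields $\|q^{k+1}-q^k\| \leq \|r^k\|$. Shifting the index by one in the same identity produces
$$ \|q^{k+2}-q^{k+1}\|^2 \leq \|W(q^{k+1}-q^k)\|^2, $$
which will deliver the monotonicity of $\|q^{k+1}-q^k\|$ as soon as $W$ is known to be nonexpansive on ${\cal Y}^M$.

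The remaining task is therefore to verify $\|W\|_{\mathrm{op}} \leq 1$ on ${\cal Y}^M$. Since $W$ is symmetric with nonnegative entries and row sums equal to one, its eigenvalues lie in $[-1,1]$ (the largest is $\lambda_1=1$ by assumption, and the others are controlled in absolute value by the spectral radius); since $W$ acts componentwise on the product space, the operator norm on ${\cal Y}^M$ coincides with the spectral radius on $\R^M$, so $\|W\|_{\mathrm{op}} \leq 1$. Combining this with the previous paragraph gives
$$ \|r^{k+1}\| = \|W(q^{k+1}-q^k)\| \leq \|q^{k+1}-q^k\| \leq \|r^k\| $$
and $\|q^{k+2}-q^{k+1}\| \leq \|q^{k+1}-q^k\|$, which are the two monotonicity statements.

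The main obstacle I anticipate is psychological rather than technical: one has to notice the identity $r^k = W(q^k-q^{k-1})$, which reinterprets the right-hand side of \eqref{eq:qnorms} as $\|r^k\|^2$ and exposes the contraction structure hidden in the iteration. Once this observation is made, everything else reduces to the nonnegativity of the symmetric Bregman distance and the elementary spectral bound on $W$.
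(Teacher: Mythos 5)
Your proof is correct and follows essentially the same route as the paper: both rest on the identity $r^k = W(q^k-q^{k-1})$ (obtained by substituting $f-Ku^k = q^k - Wq^{k-1}$ into the definition of $r^k$), the energy identity \eqref{eq:qnorms} with the nonnegative Bregman and $K$-terms dropped, and the bound $\|W\|\leq 1$. The only cosmetic difference is that you spell out the spectral argument for $\|W\|_{\mathrm{op}}\leq 1$, which the paper leaves implicit.
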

\begin{proof}
With \eqref{eq:qnorms} and $\Vert W \Vert =1$, which follows from the above properties, we have
$$ \|W(q^{k+1} - q^k)\| \leq \|q^{k+1} - q^k\| \leq  \|W(q^{k} - q^{k-1})\| \leq 
 \|q^{k} - q^{k-1}\|. $$
The decrease of $\| r^k\|$ follows from
\begin{align*}
r^{k} &= (I-W) q^{k} + (f - Ku^{k}) \\ 
    &= \underbrace{ q^{k} - Wq^{k-1} + (f - Ku^{k})}_{=0} - W(q^{k} - q^{k-1}).
\end{align*} 
\end{proof}

Another auxiliary estimate is the following:
\begin{proposition}  \label{dissprop2}
Let $(u^k,q^k)$ be a sequence generated by \eqref{eq:finaliteration} and let $\overline{u} \in {\cal U}^M$, $\overline{q} \in {\cal Y}^M$. Then the estimates
\begin{equation}
 \norm{q^{k+1}-\overline{q}}^2 + 2 \langle p^{k+1} - K^* \overline{q}, u^{k+1} - \overline{u} \rangle \leq \norm{W(q^k - \overline{q})}^2 + 2  \langle q^{k+1}- \overline{q}, r \rangle 
 \label{eq:dissipation2}
\end{equation}
and 
\begin{equation}
 \norm{q^{k+1}-\overline{q}}^2 + 2 \langle p^{k+1} - K^* \overline{q}, u^{k+1} - \overline{u} \rangle
 + \norm{Ku^{k+1}-K\overline{u}}^2 \leq \| W(q^k - \overline{q}) \|^2 + 2\langle W(q^k - \overline{q}), r\rangle + \|r\|^2  
 \label{eq:dissipation2a}
\end{equation}
hold, where $r$ is the residual
\begin{equation}
	r =  (W-I)\overline{q} + f- K\overline{u}. \label{eq:residual}
\end{equation}
\end{proposition}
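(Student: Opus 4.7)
The plan is to derive a single affine displacement identity from which both estimates fall out with routine manipulations. Subtracting $\overline{q}$ on both sides of \eqref{eq:finaliteration} and rearranging with the help of the definition $r=(W-I)\overline{q}+f-K\overline{u}$, I would first verify the identity
$q^{k+1}-\overline{q}+K(u^{k+1}-\overline{u}) = W(q^k-\overline{q})+r$.
This is a purely algebraic reorganisation of the update rule (one simply adds and subtracts $W\overline{q}$ and $K\overline{u}$ and uses $q^{k+1}=Wq^k+f-Ku^{k+1}$), and it mirrors the first step used in the proof of Proposition~\ref{dissprop1}.

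For \eqref{eq:dissipation2a} I would take the squared norm of both sides of this identity. The left-hand side expands into $\norm{q^{k+1}-\overline{q}}^2 + 2\langle q^{k+1}-\overline{q},K(u^{k+1}-\overline{u})\rangle + \norm{K(u^{k+1}-\overline{u})}^2$, and the adjoint relation together with $K^*q^{k+1}=p^{k+1}$ turns the cross-term into $2\langle p^{k+1}-K^*\overline{q},u^{k+1}-\overline{u}\rangle$. The right-hand side expands immediately into $\norm{W(q^k-\overline{q})}^2+2\langle W(q^k-\overline{q}),r\rangle+\norm{r}^2$. This already yields \eqref{eq:dissipation2a}, and in fact as an equality.

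For \eqref{eq:dissipation2} I would instead pair the same identity with $q^{k+1}-\overline{q}$ in the inner product, obtaining $\norm{q^{k+1}-\overline{q}}^2 + \langle p^{k+1}-K^*\overline{q},u^{k+1}-\overline{u}\rangle = \langle W(q^k-\overline{q}),q^{k+1}-\overline{q}\rangle + \langle r,q^{k+1}-\overline{q}\rangle$, after using the adjoint identity on the cross-term exactly as above. Applying the Young/Cauchy--Schwarz inequality $\langle a,b\rangle \leq \tfrac12\norm{a}^2+\tfrac12\norm{b}^2$ to the first term on the right, absorbing the resulting $\tfrac12\norm{q^{k+1}-\overline{q}}^2$ into the left-hand side, and multiplying through by $2$ produces precisely \eqref{eq:dissipation2}.

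No real obstacle is expected: both estimates are straightforward consequences of one affine identity, combined with either squaring (which gives the stronger equality \eqref{eq:dissipation2a}) or a single Young-type inequality (which is the source of the strict inequality in \eqref{eq:dissipation2}). The main thing to keep track of is the bookkeeping of cross-terms and the correct use of the adjoint relation $\langle K\cdot,\cdot\rangle=\langle\cdot,K^*\cdot\rangle$ to identify the Bregman-type pairing $\langle p^{k+1}-K^*\overline{q},u^{k+1}-\overline{u}\rangle$, which ultimately carries the convexity information that makes these estimates useful in the subsequent convergence analysis.
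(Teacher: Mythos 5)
Your proposal is correct and follows essentially the same route as the paper: the same affine identity $q^{k+1}-\overline{q}+K(u^{k+1}-\overline{u})=W(q^k-\overline{q})+r$, squared to obtain \eqref{eq:dissipation2a} and paired with $q^{k+1}-\overline{q}$ plus Young's inequality to obtain \eqref{eq:dissipation2}. The only (harmless) extra observation is your remark that \eqref{eq:dissipation2a} actually holds with equality.
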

\vspace*{-12pt}
\begin{proof}
Adding and subtracting $\overline{q}$ and $K\overline{u}$ as well as some rearrangement in \eqref{eq:finaliteration}, yields
\begin{equation} 
\label{eq:optiWithQ}
 q^{k+1} - \overline{q} + K(u^{k+1} - \overline{u}) = W(q^k - \overline{q}) + r. 
 \end{equation}
Taking now a squared norm in ${\cal Y}$ and expanding squares on both sides, yields
\eqref{eq:dissipation2a}.
 Taking the scalar product with $q^{k+1} -\overline{q}$ 
and an application of Young's inequality, yields \eqref{eq:dissipation2}.
\end{proof} 

The above result is particularly interesting if
 $K_i^* \overline{q}_i \in \partial J(\overline{u_i})$, since then 
the second term on the left-hand side of \eqref{eq:dissipation2} and \eqref{eq:dissipation2a} is a  symmetric Bregman distance between $u^{k+1}$ and $\overline{u}$ and in particular nonnegative.

A straight-forward computation further shows:
\begin{proposition}  \label{dissprop3}
Let $(u^k,q^k)$ with $p^k=K^* q^k$ be a sequence generated by \eqref{eq:finaliteration} and let $\overline{u} \in {\cal U}^M$, $\overline{q} \in {\cal Y}^M$. Then the identity
\begin{align}
 D_J^{p^{k+1}}(\overline{u},u^{k+1})-D_J^{p^{k}}(\overline{u},u^{k}) 
= -&  D_J^{p^k}(u^{k+1},u^k)  - \norm{q^{k+1}-q^k}^2 \nonumber \\
									&+ \langle q^{k+1}-q^k, r - (I-W)(q^k - \overline{q}) \rangle
 \label{eq:dissipation3}
\end{align}
holds, where $r$ is the residual defined by \eqref{eq:residual}.
\end{proposition}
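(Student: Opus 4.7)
The plan is to view \eqref{eq:dissipation3} as a rearrangement of a ``three-point identity'' for Bregman distances, and then to substitute the iteration \eqref{eq:finaliteration} in the form \eqref{eq:optiWithQ} to convert primal inner products into the dual quantities appearing on the right-hand side. No optimization, no variational arguments, and no one-homogeneity of $J$ are needed; the proof should be purely algebraic.

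First I would expand the three Bregman distances from the definition $D_J^p(v,u) = J(v) - J(u) - \langle p, v-u \rangle$ and combine them:
\begin{equation*}
D_J^{p^{k+1}}(\overline{u},u^{k+1}) - D_J^{p^{k}}(\overline{u},u^{k}) + D_J^{p^k}(u^{k+1},u^k).
\end{equation*}
The terms $J(\overline{u})$ cancel, as do the $\pm J(u^{k+1})$ and $\pm J(u^k)$ contributions, leaving a purely linear expression in the subgradients and iterates. Rearranging produces the standard identity
\begin{equation*}
D_J^{p^{k+1}}(\overline{u},u^{k+1}) - D_J^{p^{k}}(\overline{u},u^{k}) + D_J^{p^k}(u^{k+1},u^k) = \langle p^{k+1} - p^k, u^{k+1} - \overline{u} \rangle.
\end{equation*}
This is the only purely convex-analytic ingredient and requires no structural information about the iteration.

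Next I would bring the iteration into play via the assumption $p^k = K^* q^k$, so that
\begin{equation*}
\langle p^{k+1} - p^k, u^{k+1} - \overline{u} \rangle = \langle q^{k+1} - q^k, K(u^{k+1} - \overline{u}) \rangle.
\end{equation*}
From \eqref{eq:optiWithQ}, which rewrites \eqref{eq:finaliteration} after adding and subtracting $\overline{q}$ and $K\overline{u}$, we have
\begin{equation*}
K(u^{k+1} - \overline{u}) = -(q^{k+1} - q^k) - (I-W)(q^k - \overline{q}) + r,
\end{equation*}
where the first term arises from regrouping $W(q^k-\overline{q}) - (q^{k+1} - \overline{q})$ as $-(q^{k+1}-q^k) - (I-W)(q^k-\overline{q})$. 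Substituting and using $\langle q^{k+1}-q^k, -(q^{k+1}-q^k)\rangle = -\norm{q^{k+1}-q^k}^2$ reproduces precisely the right-hand side of \eqref{eq:dissipation3} after moving $D_J^{p^k}(u^{k+1},u^k)$ to the other side.

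I do not expect any real obstacle: the only two facts needed are the three-point identity, which follows by inspection, and the reformulation of the iteration as \eqref{eq:optiWithQ}, which is already recorded in the proof of Proposition \ref{dissprop2}. The main care point is bookkeeping of signs in the decomposition $-q^{k+1} + Wq^k = -(q^{k+1}-q^k) - (I-W)q^k$ and the analogous step for $\overline{q}$, which together yield the clean $(I-W)(q^k - \overline{q})$ contribution on the right-hand side.
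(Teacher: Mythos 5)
Your proof is correct and follows essentially the same route as the paper's: both rearrange \eqref{eq:optiWithQ}, take the dual pairing with $q^{k+1}-q^k$, and identify $\langle p^{k+1}-p^k,\, u^{k+1}-\overline{u}\rangle$ with the combination $D_J^{p^k}(u^{k+1},u^k)+D_J^{p^{k+1}}(\overline{u},u^{k+1})-D_J^{p^{k}}(\overline{u},u^{k})$. Stating the three-point identity up front is merely a cleaner packaging of the paper's chain of equalities, and your observation that it holds purely algebraically (the $J$-terms cancel without invoking one-homogeneity) is accurate.
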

\begin{proof}
We rearrange equation \eqref{eq:optiWithQ} to 
\begin{align*}
&  q^{k+1} - q^k + q^k - \overline{q} + K(u^{k+1} - \overline{u}) = W(q^k - \overline{q}) + r. \\
\Rightarrow ~ &  K(u^{k+1} - \overline{u}) = -(I-W)(q^k - \overline{q}) + r - (q^{k+1} - q^k). 
  \end{align*}
  We take the dual product with $q^{k+1}-q^k$ to obtain
$$
 \langle K(u^{k+1} - \overline{u}), q^{k+1}-q^k \rangle = \langle r - (I-W)(q^k - \overline{q}), q^{k+1}-q^k \rangle - \| q^{k+1} - q^k\|^2. 
$$
The left hand side can be rewritten as
\begin{align}
 \langle K(u^{k+1} - \overline{u}), q^{k+1}-q^k \rangle &=  \langle u^{k+1} - \overline{u}, p^{k+1}-p^k \rangle, \nonumber  \\
 &= - \langle p^k, u^{k+1} \rangle  + J(\overline{u}) - \langle p^{k+1}, \overline{u}-u^{k+1} \rangle- (J(\overline{u}) - \langle p^{k}, \overline{u} \rangle), \nonumber \\
  &= J(u^{k+1})- \langle p^k, u^{k+1} \rangle  + D_J^{p^{k+1}}(\overline{u},u^{k+1})- (J(\overline{u})- \langle p^{k}, \overline{u} \rangle), \nonumber \\ 
  &=D_J^{p^k}(u^{k+1},u^k)   + D_J^{p^{k+1}}(\overline{u},u^{k+1})-D_J^{p^{k}}(\overline{u},u^{k})\nonumber  ,
\end{align}
which proves the proposition. 
\end{proof}

\subsection{Clean Data}
In the following we consider the case of clean data, i.e. we assume there exists $\overline{u} \in {\cal U}^M$, $\overline{q} \in {\cal Y}^M$ such that
\begin{equation}
	0 = r = (W-I) \overline{q} + f - K \overline{u}, \qquad K_i^* \overline{q}_i \in \partial J(\overline{u}_i). \label{eq:cleandata}
\end{equation}
Note that \eqref{eq:cleandata} is always satisfied in our original setting, i.e. if we have a solution of $Ku =f$ with joint subgradient, if $K_i=K_1$ for all $i$, and the source-type condition $\overline{p}_1 = K_1^* \overline{q}_1$. In this case we automatically have 
$$ (W-I) \overline{q} = (W-I) e_1 \overline{p}_1 = 0. $$
Using the fact that the terms depending on $r$ drop out in the auxiliary estimates, we can state the monotonic behavior of $q^k$ approaching $\overline{q}$: 
\begin{corollary}
\label{cor:monoton2}
The quantities $\|q^k -  \overline{q}\|$ and $\|W(q^k -  \overline{q})\|$ are monotonically decreasing.
\end{corollary}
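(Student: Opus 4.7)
The plan is to exploit the clean-data condition $r=0$ inside the auxiliary estimate \eqref{eq:dissipation2a} from Proposition \ref{dissprop2} applied to $(\overline{u},\overline{q})$. Setting $r=0$ collapses \eqref{eq:dissipation2a} to
\begin{equation*}
\norm{q^{k+1}-\overline{q}}^2 + 2\langle p^{k+1}-K^*\overline{q},\,u^{k+1}-\overline{u}\rangle + \norm{K u^{k+1}-K\overline{u}}^2 \leq \norm{W(q^k-\overline{q})}^2.
\end{equation*}
Because $p^{k+1}\in\partial J(u^{k+1})$ componentwise and, by the clean-data assumption, $K^*\overline{q}\in\partial J(\overline{u})$ componentwise, the middle term is a symmetric Bregman distance and therefore nonnegative; the third term is obviously nonnegative. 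Dropping both yields
\begin{equation*}
\norm{q^{k+1}-\overline{q}}^2 \leq \norm{W(q^k-\overline{q})}^2.
\end{equation*}

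Next, I would observe that $\|W\|=1$ as an operator on ${\cal Y}^M$: this is the content already used in the proof of Corollary \ref{cor:monoton1} and follows from $W$ being symmetric with spectral radius one (its maximal eigenvalue is $\lambda_1=1$ and, being doubly stochastic, all its eigenvalues lie in $[-1,1]$). Consequently
\begin{equation*}
\norm{W(q^{k+1}-\overline{q})} \leq \norm{q^{k+1}-\overline{q}} \leq \norm{W(q^k-\overline{q})} \leq \norm{q^k-\overline{q}}.
\end{equation*}
The outer inequalities give monotonic decrease of $\|q^k-\overline{q}\|$, while the first three terms together yield monotonic decrease of $\|W(q^k-\overline{q})\|$.

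The only mildly delicate point is verifying that $K^*\overline{q}_i\in\partial J(\overline{u}_i)$ indeed puts $p^{k+1}-K^*\overline{q}$ in a position where $\langle p^{k+1}-K^*\overline{q},u^{k+1}-\overline{u}\rangle$ is a sum of componentwise symmetric Bregman distances; this is immediate from the definition since $p_i^{k+1}\in\partial J(u_i^{k+1})$ as well. There is no real obstacle beyond being careful with the block structure (sums over $i$) and invoking $\|W\|=1$; the argument is essentially a two-line consequence of Proposition \ref{dissprop2}.
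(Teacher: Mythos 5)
Your proof is correct and follows essentially the same route as the paper: the paper applies \eqref{eq:dissipation2} with $r=0$, drops the nonnegative symmetric Bregman distance term to get $\|q^{k+1}-\overline{q}\|\leq\|W(q^k-\overline{q})\|$, and chains with $\|W\|=1$ exactly as you do. Your use of \eqref{eq:dissipation2a} instead of \eqref{eq:dissipation2} is an immaterial difference, since with $r=0$ both estimates from Proposition \ref{dissprop2} collapse to the same key inequality after discarding nonnegative terms.
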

\begin{proof}
Equation \eqref{eq:dissipation2} with $r=0$ and $\langle p^{k+1} - K^* \overline{q}, u^{k+1} - \overline{u} \rangle = D_{symm}(u^{k+1},\overline{u}) \geq 0 $ leads to $\|q^{k+1} -  \overline{q}\| \leq \|W(q^k -  \overline{q})\|$, which together with $\|W\| = 1$ yields the assertion. 
\end{proof}

Using $r=0$ in the auxiliary estimates from the previous section, we can obtain a convergence result for the iterative scheme:
\begin{theorem} \label{cleandataconvergence}
Let $r=0$ and let $(u^k,q^k)$ with $p^k=K^* q^k$ be a sequence generated by \eqref{eq:finaliteration}. 
Then there exists a subsequence $(u^{k_\ell},q^{k_\ell})$ converging in the weak-star respectively weak topology. The limit $(u,q)$ of each such subsequence satisfies
\begin{equation}
	0 = (W-I)q + f- Ku. 
\end{equation}
Moreover, $Q_1 q^k \rightarrow Q_1 \overline{q}$, and if $K_i^*$ is compact then
$	K_i^*q_i \in \partial J(u_i).$
\end{theorem}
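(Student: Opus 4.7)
The plan is to establish boundedness, pass to a subsequential limit, show the residual vanishes strongly, and close the subgradient inclusion under compactness.

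First I would use Corollary~\ref{cor:monoton2} to obtain that $\{\|q^k - \overline{q}\|\}$ is monotone non-increasing, hence $\{q^k\}$ is bounded in ${\cal Y}^M$. Rewriting the iteration as $K u^{k+1} = W q^k + f - q^{k+1}$ bounds $\{K u^k\}$ as well. Together with $K_i^* q_i^k \in \partial J(u_i^k)$ and the one-homogeneity characterisation~\eqref{eq:onehomogesubdiff}, this gives $J(u_i^k) = \langle q_i^k, K_i u_i^k\rangle$ bounded. The sublevel-set hypothesis on $\Vert K_i u \Vert^2 + J(u)$ then bounds $\{u_i^k\}$ in ${\cal U}$, and Banach--Alaoglu produces a subsequence with $u^{k_\ell} \rightharpoonup^* u$ and $q^{k_\ell} \rightharpoonup q$.

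The main step is to prove that the residual $r^k := (W-I)q^k + f - K u^k$ tends to zero strongly; the identification $(W-I)q + f - K u = 0$ then follows by taking weak-* / weak limits along $k_\ell$, using that $K$ is weak-* to weak continuous by the duality setup for ${\cal U}$ and ${\cal V}$. From the iteration we have $r^{k+1} = W(q^{k+1} - q^k)$, and since $W$ fixes $e_1$ and preserves its orthogonal complement the square splits as
\begin{equation*}
\|r^{k+1}\|^2 = \|W Q_1(q^{k+1}-q^k)\|^2 + \|e_1^T(q^{k+1}-q^k)\|^2.
\end{equation*}
For the $Q_1$ part, the spectral bound $\|W Q_1 z\| \leq \lambda_2 \|Q_1 z\|$ together with the summability $\sum_k \|Q_1(q^k - q^{k-1})\|^2 < \infty$, obtained by telescoping Proposition~\ref{dissprop1}, forces convergence to zero. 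For the $e_1$ part, the clean-data relation projects to $e_1^T(f - K\overline{u}) = 0$ (because $e_1^T(W-I) = 0$), so from the iteration $e_1^T(q^{k+1} - q^k) = e_1^T K(\overline{u} - u^{k+1})$, and this vanishes once we know $K(u^{k+1} - \overline{u}) \to 0$. The latter, together with the claimed strong convergence $Q_1 q^k \to Q_1 \overline{q}$, follows by combining \eqref{eq:dissipation2a} (with $r=0$) and the Young-type estimate $\|W z\|^2 \leq \|z\|^2 - (1 - \lambda_2^2)\|Q_1 z\|^2$, and telescoping to get
\begin{equation*}
\sum_k \bigl[(1 - \lambda_2^2) \|Q_1(q^k - \overline{q})\|^2 + \|K(u^{k+1} - \overline{u})\|^2 + 2 \langle p^{k+1} - K^*\overline{q}, u^{k+1} - \overline{u}\rangle \bigr] < \infty,
\end{equation*}
where $1 - \lambda_2^2 > 0$ because $\lambda_1 = 1$ is simple by hypothesis.

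Finally, under compactness of $K_i^*$, weak convergence $q_i^{k_\ell} \rightharpoonup q_i$ upgrades to strong convergence $K_i^* q_i^{k_\ell} \to K_i^* q_i$ in ${\cal V}$, and I would close the subgradient inclusion via \eqref{eq:onehomogesubdiff}: passing to the limit in the inequality $J(v) - \langle K_i^* q_i^{k_\ell}, v\rangle \geq 0$ is immediate by strong convergence of the subgradient, while the identity $\langle K_i^* q_i^{k_\ell}, u_i^{k_\ell}\rangle = J(u_i^{k_\ell})$ (one-homogeneity) combined with strong--weak-* pairing continuity gives $\langle K_i^* q_i, u_i\rangle = \lim J(u_i^{k_\ell}) \geq J(u_i)$ by weak-* lower semicontinuity; the reverse inequality is the previous one tested against $v = u_i$, so $\langle K_i^* q_i, u_i\rangle = J(u_i)$. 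I expect the main obstacle to be the $e_1$ component of the residual, which cannot be contracted by a spectral gap and must instead be driven to zero by the clean-data identity $e_1^T(f - K\overline{u}) = 0$ combined with the $\ell^2$ summability of $K(u^{k+1} - \overline{u})$ coming from Proposition~\ref{dissprop2}.
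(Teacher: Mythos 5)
Your argument is correct, and it reaches the limit equation by a genuinely different route than the paper. The paper's proof controls the full increment $\norm{q^{k+1}-q^k}$ at once: it combines \eqref{eq:dissipation3} (after Young's inequality) with \eqref{eq:dissipation2}, exploits $\norm{Wq}^2+\norm{(I-W)q}^2\leq\norm{q}^2$, and telescopes to get $\sum_j\norm{q^{j+1}-q^j}^2<\infty$; together with the monotonicity of $\norm{q^{k+1}-q^k}$ from Proposition~\ref{dissprop1} this yields the quantitative decay $\norm{q^{k+1}-q^k}\leq C/\sqrt{k+1}$, from which both the limit equation and Corollary~\ref{cor:convergencespeed} follow. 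You instead never invoke Proposition~\ref{dissprop3}: you split $r^{k+1}=W(q^{k+1}-q^k)$ into its $e_1$ and $Q_1$ components, kill the $Q_1$ part by the spectral gap and the summability of $\norm{Q_1(q^j-q^{j-1})}^2$ from telescoping Proposition~\ref{dissprop1}, and kill the $e_1$ part via the identity $e_1^T(q^{k+1}-q^k)=e_1^TK(\overline{u}-u^{k+1})$ (using $e_1^T(I-W)=0$ and $r=0$) together with the summability of $\norm{K(u^{k+1}-\overline{u})}^2$ obtained by telescoping \eqref{eq:dissipation2a} — the same summation the paper defers to Corollary~\ref{cor:convergenceToData}. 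Your telescoped inequality also delivers $Q_1q^k\to Q_1\overline{q}$ exactly as in the paper, and your compactness/lower-semicontinuity closure of $K_i^*q_i\in\partial J(u_i)$ is the paper's argument in a slightly rearranged form. What your route buys is a cleaner separation of the two mechanisms at work (spectral contraction off $e_1$ versus the clean-data identity along $e_1$) and the avoidance of Proposition~\ref{dissprop3} altogether; what it loses is the explicit $O(1/\sqrt{k})$ rate for $\norm{q^{k+1}-q^k}$, which is not needed for the theorem itself but is the ingredient the paper reuses for Corollary~\ref{cor:convergencespeed}.
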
 
\begin{proof}
Summation of \eqref{eq:dissipation1} yields
\begin{equation}
	\norm{q^{k+1}-\overline{q}}^2 + 2 \sum_{j=1}^{k+1} \langle p^{j} - K^* \overline{q}, u^{j} - \overline{u} \rangle + (1-\lambda_2^2) \sum_{j=1}^k \norm{Q_1(q^j - \overline{q})}^2  \leq \norm{q^0 - \overline{q}}^2.
\end{equation}
This implies $Q_1 q^k \rightarrow Q_1 \overline{q}$.

From \eqref{eq:dissipation3} we deduce by applying Young's inequality
$$ 2 D_J^{p^{k+1}}(\overline{u},u^{k+1})- 2 D_J^{p^{k}}(\overline{u},u^{k}) 
 \leq - 2 D_J^{p^k}(u^{k+1},u^k)  - \norm{q^{k+1}-q^k}^2 + \norm{(I-W)(q^k - \overline{q})}^2. $$
Adding this estimate and \eqref{eq:dissipation2} we further deduce
\begin{align*}
 & 2 D_J^{p^{k+1}}(\overline{u},u^{k+1}) + 2 D_J^{p^k}(u^{k+1},u^k)  + \norm{q^{k+1}-\overline{q}}^2 + \\
 & 2 \langle p^{k+1} - K^* \overline{q}, u^{k+1} - \overline{u} \rangle + \norm{q^{k+1}-q^k}^2  \\
 & \qquad \leq 2 D_J^{p^{k}}(\overline{u},u^{k}) + \norm{W(q^k - \overline{q})}^2 + \norm{(I-W)(q^k - \overline{q})}^2.
\end{align*}
Since $I-W$ is positive semidefinite it is straightforward to see
$$ \norm{Wq}^2+\norm{(I-W)q}^2 \leq \norm{q}^2. $$
Using this on the right-hand side of the previous estimate together with summation yields
\begin{align*}
 & 2 D_J^{p^{k+1}}(\overline{u},u^{k+1}) + \norm{q^{k+1}-\overline{q}}^2 + 2 \sum_{j=0}^k D_J^{p^j}(u^{j+1},u^j)   + \\
 & 2 \sum_{j=0}^k \langle p^{j+1} - K^* \overline{q}, u^{j+1} - \overline{u} \rangle + \sum_{j=0}^k \norm{q^{j+1}-q^j}^2  \\
 & \qquad \leq 2 D_J^{p^{0}}(\overline{u},u^{0}) + \norm{q^0 - \overline{q}}^2 .
\end{align*}
With the monotonicity of $\norm{q^{k+1}-q^k}$ from \eqref{eq:dissipation1} we hence deduce
\begin{equation}
\label{eq:qdecay}
	\norm{q^{k+1}-q^k} \leq \frac{C}{\sqrt{k+1}}
\end{equation}
for some constant $C$. 

Moreover, we deduce that $q^k$ is bounded, which implies the existence of a subsequence
$q^{k_\ell}$ converging weakly to some $\hat q$. Moreover, from \eqref{eq:dissipation2a} we conclude that $Ku^k$ is uniformly bounded, thus  
$$ J(u^{k}) = \langle K^* q^{k}, u^{k} \rangle = \langle q^{k}, K u^{k} \rangle \leq \norm{q^k} \norm{Ku^k} $$
is uniformly bounded as well. From the boundedness of the sublevel sets of $\frac{1}2\norm{K\cdot}^2+J$ we obtain the existence of a weak-star convergent subsequence of $u^k$, again denoted by $k_\ell$. 

Finally, since $q^{k+1}-q^k \rightarrow 0$, we conclude that the limits $q$ and $u$ of such converging subsequences satisfy $q = Wq + f- Ku$. 
Moreover, the weak convergence of $q^{k_\ell}$ yields
$$ \langle K_i^* q_i, v \rangle = \lim \langle K_i^* q_i^{k_\ell}, v \rangle \leq J(v) \qquad \forall~v \in {\cal U}.$$
If $K_i^*$ is compact, then the $K_i^* q_i^{k_\ell}$ converges strongly and hence
$$ \langle K_i^* q_i^{k_\ell}, u_i^{k_\ell} \rangle \rightarrow \langle K_i^* q_i, u_i \rangle. $$
Now the lower semicontinuity of $J$ implies for any $v_i \in {\cal U}$
$$ J(u_i) + \langle K_i^* q_i,v_i- u_i \rangle \leq \lim\inf J(u_i^{k_\ell}) + \langle K_i^* q_i^{k_\ell},v_i- u_i^{k_\ell} \rangle  \leq J(v_i),$$
hence $K_i^* q_i \in \partial J(u_i)$.
\end{proof}

From \eqref{eq:qdecay} and $\|W\| = 1$ we can immediately conclude
\begin{corollary}
\label{cor:convergencespeed}
Let the assumptions of Theorem \ref{cleandataconvergence} be satisfied. Then the color Bregman iteration converges with 
\begin{align}
\|r^{k}\| \leq \frac{C}{\sqrt{k}}
\end{align}
for some constant $C$.
\end{corollary}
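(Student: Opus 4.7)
My plan is to reduce the bound on $\|r^k\|$ to the already-established $O(1/\sqrt{k})$ decay of the increments $\|q^{k+1}-q^k\|$ derived in the proof of Theorem \ref{cleandataconvergence} (estimate \eqref{eq:qdecay}). The key observation is that the residual and the increment are essentially the same object up to an application of $W$.

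Concretely, I would first rewrite the residual using the iteration \eqref{eq:finaliteration}. Evaluating the update at iteration $k$ gives $q^k = Wq^{k-1} + f - Ku^k$, so that $f - Ku^k = q^k - Wq^{k-1}$. Plugging this into the definition \eqref{eq:residualdef} of $r^k$,
\begin{equation*}
r^k \;=\; (W-I)q^k + f - Ku^k \;=\; (W-I)q^k + q^k - Wq^{k-1} \;=\; W(q^k - q^{k-1}).
\end{equation*}
This identity is the crux. Taking norms and using $\|W\| = 1$ (which holds because $W$ is symmetric with spectral radius $1$ under our standing assumptions, as already exploited in Corollary \ref{cor:monoton1}), I obtain $\|r^k\| \leq \|q^k - q^{k-1}\|$.

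Next I would invoke \eqref{eq:qdecay}, which was established inside the proof of Theorem \ref{cleandataconvergence}. That estimate gives $\|q^{k+1} - q^k\| \leq C/\sqrt{k+1}$ for some constant $C$ depending only on the initial data $D_J^{p^0}(\overline{u},u^0)$ and $\|q^0 - \overline{q}\|^2$. Shifting the index by one yields $\|q^k - q^{k-1}\| \leq C/\sqrt{k}$, and combining with the bound from the previous paragraph gives $\|r^k\| \leq C/\sqrt{k}$, as claimed.

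There is essentially no obstacle: all the heavy lifting (the telescoping of the dissipation estimates, the monotonicity, and the rate \eqref{eq:qdecay}) was already done in Theorem \ref{cleandataconvergence} and Propositions \ref{dissprop1}--\ref{dissprop3}. The only point requiring any care is making sure that the identity $r^k = W(q^k - q^{k-1})$ is indexed correctly (the residual at step $k$ uses $u^k$, which is produced by the update from $q^{k-1}$), but this is a direct substitution rather than a true difficulty.
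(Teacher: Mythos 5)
Your proposal is correct and follows exactly the paper's (very terse) argument: the identity $r^k = \pm W(q^k - q^{k-1})$ is already established in the proof of Corollary \ref{cor:monoton1}, and combining it with $\|W\|=1$ and the decay estimate \eqref{eq:qdecay} gives the claim. Your index bookkeeping is right, so nothing is missing.
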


Under the stronger condition that there is indeed a solution with subgradient $\overline{p}=\overline{p}_1 e_1$ one can easily give a refined statement: 
\begin{corollary}
\label{cor:convergenceToData}
Let in addition to the assumptions of Theorem \ref{cleandataconvergence} $\overline{u}$ satisfy $K\overline{u}=f$. Then $Ku^k \rightarrow f$ and $p^k$ converges weakly to $p_1 e_1$.  
\end{corollary}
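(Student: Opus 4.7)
The plan is to exploit the extra hypothesis $K\overline{u}=f$ to place $\overline{q}$ in the kernel of $W-I$, and then combine the convergence $Q_1 q^k\to Q_1\overline{q}$ supplied by Theorem \ref{cleandataconvergence} with the residual-decay bound in Corollary \ref{cor:convergencespeed}.

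First I would substitute $K\overline{u}=f$ into the clean-data identity $0=(W-I)\overline{q}+f-K\overline{u}$ to obtain $(W-I)\overline{q}=0$. Since $W$ is assumed to have a simple maximal eigenvalue $1$ with eigenvector $e_1$, this forces $\overline{q}$ to lie in $\mathrm{span}(e_1)$ (in the $\R^M$ factor), so $Q_1\overline{q}=0$. Theorem \ref{cleandataconvergence} therefore upgrades to $Q_1 q^k\to 0$ strongly, which will be the central tool.

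To obtain $Ku^k\to f$ I would control the residual $r^k=(W-I)q^k+f-Ku^k$. A direct computation from \eqref{eq:finaliteration}, substituting $f-Ku^{k+1}=q^{k+1}-Wq^k$, yields the clean identity $r^{k+1}=W(q^{k+1}-q^k)$. Combined with $\|W\|=1$ and the decay $\|q^{k+1}-q^k\|\le C/\sqrt{k+1}$ from \eqref{eq:qdecay}, this gives $\|r^k\|\to 0$. On the other hand, because $W-I$ vanishes on $\mathrm{span}(e_1)$, we have $(W-I)q^k=(W-I)Q_1 q^k\to 0$ strongly. Subtracting these two facts from the definition of $r^k$ gives $Ku^k-f=(W-I)q^k-r^k\to 0$.

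For the weak convergence of $p^k=K^*q^k$ to a multiple of $e_1$, I would use that $\|q^k-\overline{q}\|$ is monotonically decreasing (Corollary \ref{cor:monoton2}), hence $q^k$ is bounded, and extract a weakly convergent subsequence $q^{k_\ell}\rightharpoonup q$. Strong convergence $Q_1 q^k\to 0$ passes to the weak limit, so $Q_1 q=0$, i.e.\ $q=e_1\otimes \tilde q$ for a single element $\tilde q\in{\cal Y}$; then $p=K^*q$ has all $M$ channel components equal to $K_i^*\tilde q$, which is exactly the structure $p_1 e_1$ asserted in the corollary. The main obstacle I foresee is promoting this from a subsequential statement to convergence of the whole sequence: the $Q_1$-part already converges strongly to $0$, so the issue is pinning down a unique weak limit of the scalar-in-channel quantity $e_1^T q^k$. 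From the iteration one has the telescoping recursion $e_1^T q^{k+1}=e_1^T q^k+e_1^T(f-Ku^{k+1})$, and the argument would be completed by showing that the increments are absolutely summable (or at least that all cluster points of $e_1^T q^k$ coincide, using the subgradient constraint $K_i^*q_i\in\partial J(u_i)$ available from Theorem \ref{cleandataconvergence} together with the one-homogeneity identity $J(\overline u_i)=\langle K_i^*\tilde q,\overline u_i\rangle$).
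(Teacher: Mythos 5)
Your argument reaches the same conclusions as the paper but by a genuinely different route for the first assertion. The paper's proof is a one-liner: sum the clean-data version of \eqref{eq:dissipation2a} (with $r=0$, the middle term being a nonnegative symmetric Bregman distance since $K_i^*\overline{q}_i\in\partial J(\overline{u}_i)$) to get $\sum_k \Vert Ku^{k+1}-K\overline{u}\Vert^2 \le \Vert q^0-\overline{q}\Vert^2$, whence $Ku^k\to f$ with square-summable residuals. You instead decompose $Ku^k-f=(W-I)q^k-r^k$, kill the first piece via $(W-I)q^k=(W-I)Q_1q^k\to 0$ (using that $K\overline{u}=f$ forces $(W-I)\overline{q}=0$, hence $Q_1\overline{q}=0$ by simplicity of the eigenvalue $1$ --- this step is correct) and the second piece via the $C/\sqrt{k}$ bound of Corollary \ref{cor:convergencespeed}; the identity $r^{k+1}=W(q^{k+1}-q^k)$ you use is exactly the one underlying Corollary \ref{cor:monoton1}. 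Both routes are valid: yours separates the consensus defect from the iteration residual in an illuminating way, while the paper's is shorter and yields the stronger square-summability. For the weak convergence of $p^k$ your argument coincides with what the paper implicitly does: bounded $q^k$, weak cluster point $q$ with $Q_1q=0$ forced by the strong convergence of the $Q_1$-component, hence $q=e_1\otimes\tilde q$ and $p=K^*q$ of the form $p_1e_1$. The gap you flag --- promoting subsequential to whole-sequence weak convergence --- is not closed by the paper either, whose one-sentence proof rests on Theorem \ref{cleandataconvergence}, which itself only produces subsequences; a standard way to close it would be an Opial/Fej\'er argument using Corollary \ref{cor:monoton2} (monotonicity of $\Vert q^k-\overline{q}\Vert$ for every admissible $\overline{q}$) together with the admissibility of every weak cluster point under the compactness assumption on $K_i^*$. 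So your proposal is correct where the paper is, and honest about the one point the paper glosses over.
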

\begin{proof}
The assertion follows from Theorem \ref{cleandataconvergence} combined with a summation of \eqref{eq:dissipation2a}.
\end{proof}

\begin{example}
 For regular Bregman iteration without any coupling of the subgradients we have $W=I$. Therefore, corollary \ref{cor:monoton1} tells us that $\|r^k\| = \|Ku^k - f\|$ is monotonically decreasing and corollary \ref{cor:convergencespeed} tells us that $\|Ku^k - f\| \leq \frac{C}{\sqrt{k}}$, which both is in agreement with the results of \cite{osh-bur-gol-xu-yin}. 
\end{example}
\subsection{Noisy Data} 
A key property of Bregman iterations for noisy data is the decrease of the Bregman distance to the clean image at least until the residual reaches the noise level. This yields the stability of the iteration and a strategy to choose a stopping criterion in dependence of the noise. 

In the case of the color Bregman iteration \eqref{eq:dissipation3} this will not work by comparing $r^{k+1}$ and $r$, but rather shifted versions, namely
\begin{equation}
	\sigma^k = r^{k+1} - (I-W) (q^k-q^{k+1}) = q^{k+1}-q^k,
\end{equation}
and 
\begin{equation}
	\overline{\sigma}^k = r - (I-W)(q^k - \overline{q}). 
\end{equation}

Indeed, from \eqref{eq:dissipation3} we can conclude   
\begin{equation}
 D_J^{p^{k+1}}(\overline{u},u^{k+1}) \leq D_J^{p^{k}}(\overline{u},u^{k}) 
\end{equation}
as long as 
\begin{equation}
	\Vert \sigma^k \Vert \geq \Vert \overline{\sigma}^{k} \Vert. \label{eq:discrepancy}
\end{equation}
At the initial iteration the comparison in \eqref{eq:discrepancy} is the same as in the classical discrepancy, since
$$ \sigma^0 = f - Ku^1, \qquad \overline{\sigma}^0 = f -K \overline{u}. $$
Moreover, we have shown above that $\Vert \sigma^k \Vert$ is decreasing during the iteration, which further indicates that there is an appropriate stopping index at which \eqref{eq:discrepancy} is violated for the first time. 

\begin{remark}
The above convergence analysis was based on formulation \eqref{eq:alternativeiteration}, which, as pointed out above, is only equivalent to \eqref{eq:primalupdate} in case that $WK^* = K^* W$. Note that this property does not only hold for denoising operators, but for all image reconstruction operators (such as blur or inpainting operators) which act on all color channels in the same way (e.g. the same blur operator for all color channels). For applications where the assumption $WK^* = K^* W$ does not hold, we have two types of iterative procedures, namely \eqref{eq:primalupdate} and \eqref{eq:alternativeiteration}, which could both be used, but potentially yield different results. For formulation \eqref{eq:alternativeiteration} the convergence theory still holds. The only limitation is that it does not necessarily have the interpretation of minimizing the Bregman distance between the channels anymore. 
\end{remark}

In the numerical results section we will present detailed results for the application of color Bregman iteration to TV denoising as well as some results on image deblurring and - as an exemplary problem - results for image inpainting for which the assumption $WK^*=K^*W$ does not hold. 

\section{Computational Results}
\label{sec:numerics}
In this section we will demonstrate the general behavior of the color Bregman TV as well as of the infimal convolution Bregman iteration model. In our numerical experiments of denoising 42 test images and a comparison to TV, TV with separate channel Bregman iteration, and vectorial TV, our proposed methods, show very promising results. Additional to the denoising results, we include detailed numerical experiments on different artificial test images showing the behavior of color Bregman iteration for images with edges pointing in different directions in the supplementary material. 

Before jumping into the numerical experiments, let us very briefly discuss the numerical implementation for both approaches, color Bregman iteration and infimal convolution Bregman iteration.
\subsection{Denoising}
\subsubsection{Numerical Implementation of Color Bregman TV Denoising}
In the case of TV denoising (using the simplified notation $J(u) = \|\nabla u\|_1$ for the total variation) we rewrite \eqref{eq:primalupdate} by defining the sum of all mixed residual $\tilde{p}_i^k$ with $\tilde{p}_i^0 = 0$ and obtain
\begin{align}
u_i^{k+1} &\in \text{arg}\min_{u_i \in {\cal U}} \left( \frac{1}2 \Vert u_i  - (f_i + \tilde{p}_i^k) \Vert^2 + \alpha \|\nabla u_i\|_1 \right),\\ 
\tilde{p}_i^{k+1} &= \sum_j w_{i,j} (f_j + \tilde{p}_i^k - u_j^{k+1}), \label{eq:colbregiter}
\end{align}
which is similar to the `adding-back-the-noise' formulation from \cite{osh-bur-gol-xu-yin}. 

For the above minimization in $u$, we apply the split Bregman method \cite{splitbregman} also known as the alternating directions method of multipliers (ADMM), see \cite{boyd11}: A new variable $d$ and an additional constraint $d = \nabla u$ is introduced, which is enforced using Bregman iteration. The minimization between $u$ and $d$ is done in an alternating fashion. This leads to an inner loop (ADMM algorithm to minimize for $u$) and an outer loop for the color Bregman iteration given by \eqref{eq:colbregiter}. The inner loop becomes
\begin{align*}
u_i & \leftarrow \text{arg}\min_{u_i } \left( \frac{1}2 \Vert u_i  - f^k_i \Vert^2 + \frac{\mu}2 \Vert \nabla u_i  - d_i + b_i \Vert^2  \right), \\
d_i  &\leftarrow \text{arg}\min_{d_i } \left( \frac{\mu}2 \Vert \nabla u_i  - d_i + b_i \Vert^2   + \alpha \|d_i\|_1 \right),\\
b_i &\leftarrow b_i + \nabla u_i -  d_i.
\end{align*}
Or, using the optimality conditions for $u$ and $d$ 
\begin{align*}
u_i &\leftarrow (I - \mu \Delta)^{-1} \left( f^k_i  + \mu  \text{div}(d_i - b_i)  \right), \\
d_i  &\leftarrow  \text{shrink}\left(\nabla u_i + b_i, \frac{\alpha}{\mu}\right),\\
b_i &\leftarrow b_i + \nabla u_i -  d_i.
\end{align*}

In our implementation we use an adaptive $\mu$ as well as a stopping criterion in the inner loop based on the primal and dual residual, see \cite{boyd11} for details. 

\subsubsection{Numerical Implementation of Infimal-Convolution Bregman TV}
The infimal convolution iteration is a little more challenging to minimize, since we have to find the decomposition of each $u^{k+1}_i$ into a part for which we measure the Bregman distance to $u_j^k$ and one part with a Bregman distance to $-u_j^k$ for each $j$. Again, we apply the ADMM algorithm. For the sake of avoiding too many indices, we omit additional indices indicating the iteration number of the minimization algorithm. We rewrite the minimization with respect to $u_i$ as the minimization of  
\begin{align*}
 & \frac{\lambda}{2} \|u_i - f_i\|^2 + w_{i,i} (\|d_i\|_1 - \langle q^k_i,d_i \rangle) + \frac{\mu}{2} \|\nabla u_i  - d_{i} + b_{i}\|^2  \\
  &+ \sum_{j\neq i} w_{i,j}(\|d_{j,+}\|_1 - \langle q^k_j, d_{j,+} \rangle + \|d_{j,-}\|_1 + \langle q^k_j, d_{j,-} \rangle) \\
  &+ \frac{\mu}{2} \sum_{j\neq i} (\|\nabla(u_i - \phi_j) - d_{j,+} + b_{j,+}\|^2 + \|\nabla \phi_j  - d_{j,-} + b_{j,-}\|^2)
\end{align*} 
with respect to $(u_i, (\phi_j, d_i, d_{j,+}, d_{j,-})_{j=1, ... c})$ and the constraints $\nabla u_i = d_i$, $\nabla (u_i - \phi_j) = d_{j,+}$ and $\nabla \phi_j = d_{j,-}$. 
We solve for $(u_i, \{\phi_j\})$, and $(d_i , \{d_{j,+}\}, \{d_{j,-}\})$ in an alternating fashion and obtain
\begin{align*}
0 &= \lambda (u_i -f_i) - \mu \text{div}\big(\nabla u_i - d_i + b_i +\sum_{j \neq i} (\nabla u_i - \nabla \phi_j - d_{j,+} + b_{j,+}) \big), \\
0 &= \mu ~ \text{div}(\nabla u_i - \nabla \phi_j - d_{j,+} + b_{j,+} - \nabla \phi_j + d_{j,-} - b_{j,-}) \qquad \forall j\neq i 
\end{align*}
as the optimality conditions for $(u_i, \{\phi_j\})$, which we can rewrite as
\begin{align}
(\lambda - \mu c \Delta) u_i + \sum_{j\neq i}\mu \Delta \phi_j &= \lambda f_i + \mu \text{div}(b_i - d_i + \sum_{j\neq i}(b_{j,+}- d_{j,+})), \label{eq:optimality_part1} \\
\Delta u_i - 2 \Delta \phi_j &= \text{div}(d_{j,+} - b_{j,+} +b_{j,-} - d_{j,-}) \qquad \forall j\neq i. \label{eq:optimality_part2}
\end{align}
Note that we can use the equations \eqref{eq:optimality_part2} in \eqref{eq:optimality_part1} to eliminate all $\phi_j$ such that solving for $u_i$ reduces to the linear equation
\begin{align*}
(\lambda - \frac{\mu c+1}{2} \Delta) u_i  &= \lambda f_i + \mu \text{div}(b_i - d_i + \frac{1}{2} \sum_{j\neq i}(b_{j,+}- d_{j,+} + b_{j,-} - d_{j,-})).
\end{align*}
After finding $u_i$, equations \eqref{eq:optimality_part2} yield a Poisson equation for each $\phi_j$. Both, the solving for $u_i$ and solving for the $\phi_j$ can be done very efficiently using the discrete cosine transform. 
The minimization with respect to $(d_i , \{d_{j,+}\}, \{d_{j,-}\})$ yields simple shrinkage formulas for each variable:
\begin{align*}
d_i &= \text{shrink}\left(\nabla u_i + b_i + \frac{w_{i,i}}{\mu} q_i^k, \frac{w_{i,i}}{\mu}\right), \\
d_{j,+} &= \text{shrink}\left(\nabla (u_i - \phi_j) + b_{j,+} + \frac{w_{i,j}}{\mu} q_j^k, \frac{w_{i,j}}{\mu}\right), \\
d_{j,-} &= \text{shrink}\left(\nabla \phi_j + b_{j,-} - \frac{w_{i,j}}{\mu} q_j^k, \frac{w_{i,j}}{\mu}\right) .
\end{align*}
Finally, we update $b_i$, $b_{j,+}$ and $b_{j,-}$. The formulas for the updates are the typical ADMM updates and are left out here for the sake of brevity. After convergence we use
$$ q_i^{k+1} = q_i^k + \frac{\mu}{w_{i,i}} b_i$$
to compute the $q^{k+1}$ for the next iteration. 
\subsubsection{Denoising Results on Artificial Images}
We presented detailed discussions about the behavior of the color Bregman as well as the infimal convolution Bregman approach regarding their properties in the cases where the data channels do or do not have a common subgradient. As shown above, the property of having a common subgradient translates to the edges being at the same positions and pointing into the same direction when using TV regularization. To closely investigate the effect of this property on color Bregman and infimal convolution iteration, we generated several simple test images with two nested, differently colored squares, such that the jumps sometimes point in the same and sometimes point in opposite directions. We ran both of the proposed methods for 20 iterations and illustrate the temporal evolution of the iteration in a sequence of images in the accompanying supplementary material. A more detailed description of the corresponding experimental setup can also be found in the supplementary material. 

\subsubsection{Denoising Results on Natural Color Images}

Despite the illustration of the behavior of the proposed methods on the artificial test images, the success of the proposed methods in real applications will depend on how well the assumption of common edge sets and edges pointing in the same direction is met in practice. Therefore, we will evaluate the denoising results on a set of real and realistic images. 

For our numerical experiments we used the 24 images of the Kodak image data set (cf.  {http://r0k.us/ graphics/kodak/}), a representative set of color images. We added Gaussian noise with a standard deviation of $\sigma=0.05$ (for images on a scale from $0$ to $1$) to obtain noisy color images and applied a channel-by-channel TV, regular Bregman iteration, infimal convolution Bregman iteration and color Bregman iteration to the noisy images and additionally compare our results to the vectorial total variation (VTV) approach proposed by Bresson and Chan in  \cite{Bre08}, for which the Matlab code is available on the author's website \textit{http://www.cs.cityu.edu.hk/~xbresson/codes.html}. 
The parameter $\mu$ for the Bregman iteration methods and the parameter $\lambda$ for the VTV method were optimized using the first five images of each data set and then these parameters were used for denoising the complete image data set. 
For the color Bregman TV iteration we set all weights to one third, $w_{i,j} = 1/3$, since this choice gave the best reconstruction quality in our numerical denoising experiments.

The mean peak signal-to-noise ratio (PSNR) values over all 24 Kodak images are shown in Tab. \ref{tab:meanValues}. We included the TV and Bregman iteration based TV results using the anisotropic as well as the isotropic definition of the TV. As we can see, isotropic color Bregman iteration yields the highest and infimal convolution Bregman iteration the second highest PSNR values. The mean PSNR using isotropic color Bregman iteration is about 1.35 dB higher than the PSNR using classical TV. In comparison to VTV, the infimal convolution approach yields a small improvement while color Bregman iteration can further improve the infimal convolution results by more than 0.5dB. 

Figure \ref{fig:psnrSorted} shows the PSNR results of all methods on all 24 Kodak images as a line plot. For the sake of readability the results are sorted in descending order based on the color Bregman PSNR value. As we can see the color Bregman iteration results show the highest PSNR for all the 24 images. The infimal convolution approach outperforms VTV in most cases but remains clearly behind the color Bregman iteration results. Thus, for our test data, the stricter assumption of the edges pointing into the same direction seems to help much more with the suppression of the noise than it harms at parts where the assumption might be violated. 

	We additionally used the McMaster (McM) data set (cf. {http://www4.comp.polyu.edu.hk/~cslzhang/ CDM\_Dataset.htm}). Similarly to the Kodak data results, the mean PSNR values in Tab. \ref{tab:meanValuesMcM} show that isotropic color Bregman iteration yields the best results in terms of the PSNR. Interestingly, on this dataset the infimal convolution iteration yields almost equally good results. This may be due to the extremely strong colors in the images from the McM data set, which may lead to a large amount of edges pointing in different directions. The line plot in figure~\ref{fig:psnrSorted} shows that color Bregman iteration outperforms VTV on all images with one exception: image number 17, which shows a highly saturated image of a green and red flower. As the edges are mostly pointing into different directions, infimal convolution shows the best result on this image.
	
	A visual comparison of the results obtained on Kodak image 23 (see figure \ref{fig:cropsKodak23}) shows the advantage of obtaining higher contrast images with color Bregman iteration. The contrast and detail recovery is highest in the color Bregman results. Notice that a higher noise level of $\sigma=0.15$ was chosen in this example to better show the differences of the method results. The reference and the noisy image crop of the Kodak image 23 are shown in figure \ref{fig:23ref} and \ref{fig:23noisy} while figures \ref{fig:23TViso} - \ref{fig:23ColBreg} show the results of classical channel-by-channel TV, vectorial TV and the two proposed methods.
In figure~\ref{fig:23InfConf} and figure~\ref{fig:23ColBreg} the results of the proposed infimal convolution and the color Bregman iteration are shown. Around the eye of the bird we observe higher contrast in the infimal convolution and the color Bregman iteration results compared to the TV results. The black color in the circle around the eye can be recovered completely using the color Bregman iteration. The contrast using infimal convolution Bregman iteration is still higher than the contrast of VTV (figure~\ref{fig:23VecTV}), thus the mentioned circle is darker in the infimal convolution result than in the VTV result. The color Bregman iteration is the only method that brings back details (dark spots) on the blue wing of the bird.

\begin{figure}[H]
  \centering
  \subfloat{
	    \centering
	  \includegraphics[width=0.531\textwidth, natwidth=164.7mm,natheight=112.9mm]{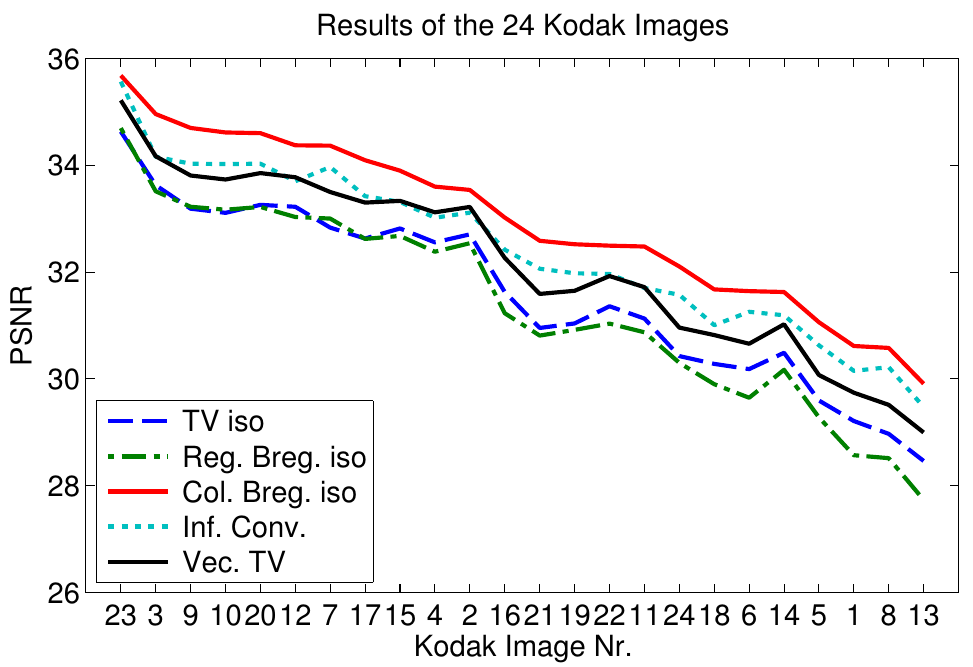}
  }
  \subfloat{
	    \centering
	  \includegraphics[width=0.426\textwidth, natwidth=132.3mm,natheight=112.9mm]{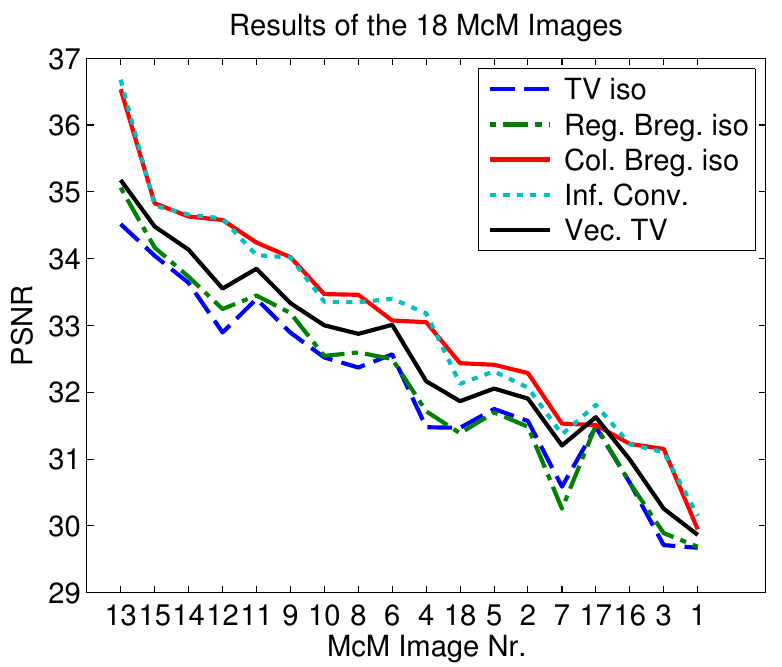}
  }
  \caption[PSNR results sorted]{PSNR results for the 24 images of the Kodak image set on the left and for the McM data set on the right, sorted in descending order ($\sigma=0.05$).}
  \label{fig:psnrSorted}
\end{figure}
\begin{table}[H]
  \centering
  \begin{tabular}{ll}
    Method & PSNR \\
    \hline
    \hline
    Anisotropic TV &31.60\\
    Isotropic TV  &31.59   \\
    Anisotropic channel-by-channel Bregman iteration &   31.07\\
    Isotropic channel-by-channel Bregman iteration & 31.38   \\
    Anisotropic color Bregman iteration & 32.63 \\
    Isotropic color Bregman iteration & \bf{32.95}    \\
    Isotropic Infimal convolution Bregman iteration & 32.41   \\
    Vectorial TV & 32.16   \\

  \end{tabular}
  \caption[Mean value of the 24 PSNR results]{Mean PSNR results of the 24 Kodak images with AWGN ($\sigma=0.05$). }
  \label{tab:meanValues}
\end{table}
\begin{table}[H]
  \centering
  \begin{tabular}{ll}
    Method & PSNR \\
    \hline
    \hline
    Anisotropic TV &31.99 \\
    Isotropic TV  &32.07   \\
    Anisotropic channel-by-channel Bregman iteration &   31.99\\
    Isotropic channel-by-channel Bregman iteration & 32.15   \\
    Anisotropic color Bregman iteration & 32.69 \\
    Isotropic color Bregman iteration & \bf{33.02}    \\
    Isotropic Infimal convolution Bregman iteration & 33.01   \\
    Vectorial TV & 32.52   \\

  \end{tabular}
  \caption[Mean value of the 18 PSNR results]{Mean PSNR results of the 18 McM images with AWGN ($\sigma=0.05$). }
  \label{tab:meanValuesMcM}
\end{table}

\begin{figure}[H]
\captionsetup[subfigure]{labelformat=empty}
  \centering
  \subfloat{
   \includegraphics[width=0.29\textwidth, natwidth=460,natheight=440]{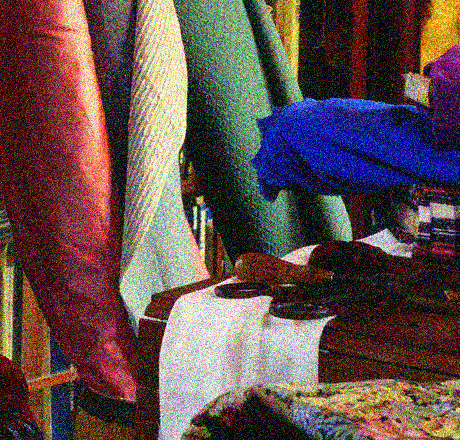}
   }
     \subfloat{
   \includegraphics[width=0.29\textwidth, natwidth=460,natheight=440]{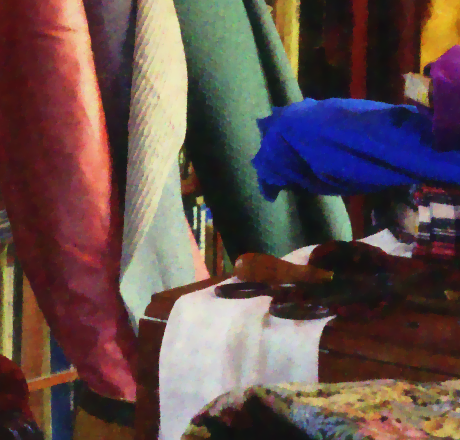}
   }
  \subfloat{
   \includegraphics[width=0.29\textwidth, natwidth=460,natheight=440]{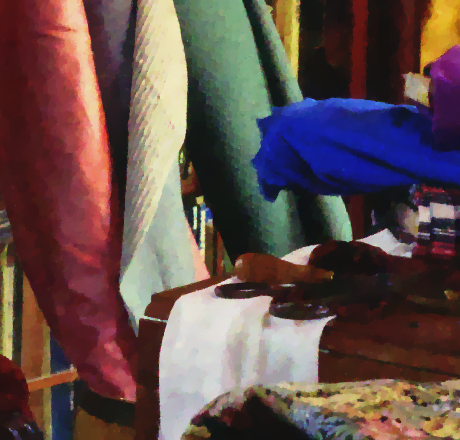}
   } \\   
  \subfloat{
   \includegraphics[width=0.29\textwidth, natwidth=460,natheight=440]{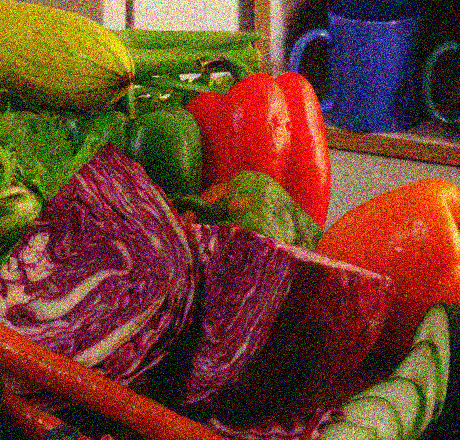}
   }
     \subfloat{
   \includegraphics[width=0.29\textwidth, natwidth=460,natheight=440]{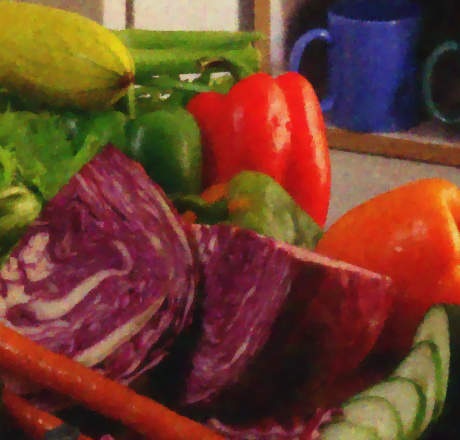}
   }
  \subfloat{
   \includegraphics[width=0.29\textwidth, natwidth=460,natheight=440]{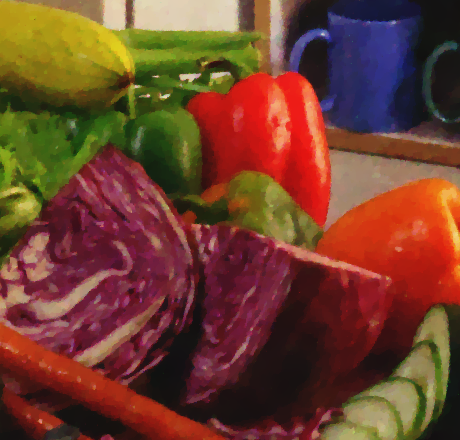}
   } \\ 
  \subfloat{
   \includegraphics[width=0.29\textwidth, natwidth=460,natheight=440]{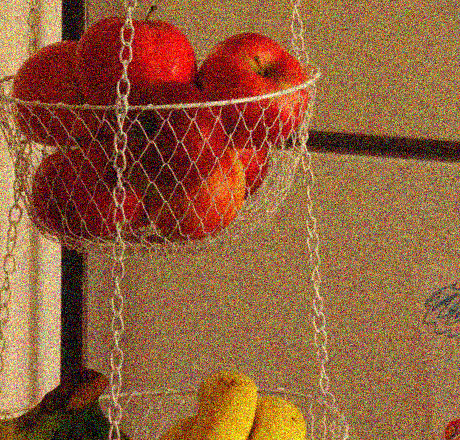}
   }
     \subfloat{
   \includegraphics[width=0.29\textwidth, natwidth=460,natheight=440]{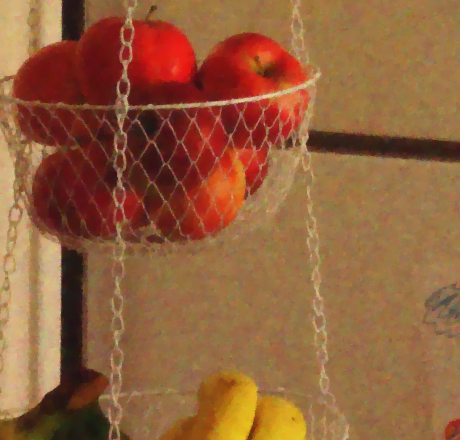}
   }
  \subfloat{
   \includegraphics[width=0.29\textwidth, natwidth=460,natheight=440]{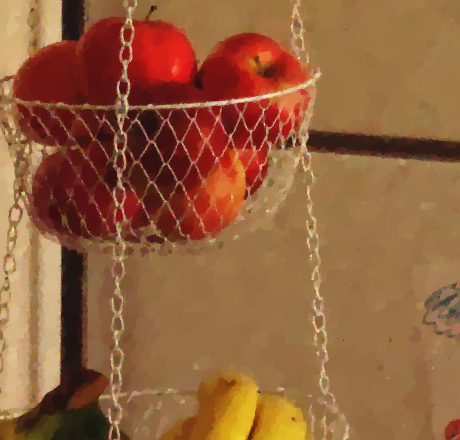}
   } \\ 
  \subfloat[Noisy ($\sigma=0.15$)]{
   \includegraphics[width=0.29\textwidth, natwidth=460,natheight=440]{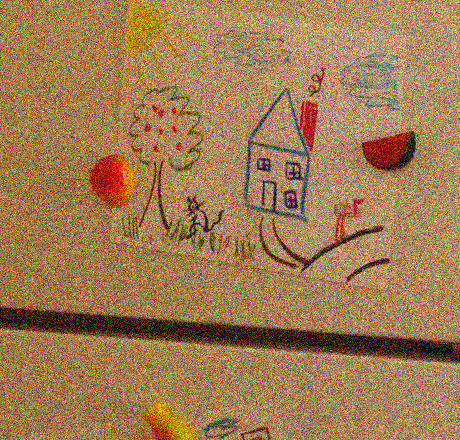}
   \label{fig:mcm2noisy}
   }
     \subfloat[Vectorial TV \cite{Bre08}]{
   \includegraphics[width=0.29\textwidth, natwidth=460,natheight=440]{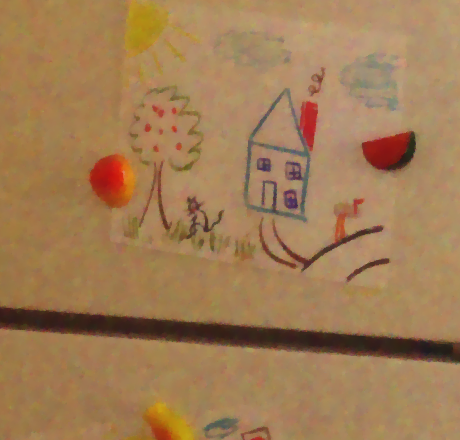}
   \label{fig:mcm2vtv}
   }
  \subfloat[Color Bregman TV]{
   \includegraphics[width=0.29\textwidth, natwidth=460,natheight=440]{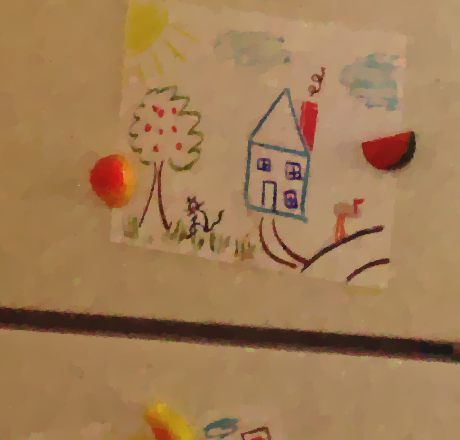}
   \label{fig:mcm2cbi}
   } 
    \caption[Denoising results on the McM data set]{Denoising results on the McM data set, from top to bottom: image number 2,10,12 and 13, from left to right: noisy image with $\sigma=0.15$, denoised image using Vectorial TV \cite{Bre08}, denoised image using the proposed color Bregman iteration.} 
    \label{fig:mcmImages}
\end{figure}

\begin{figure}[ht]
  \subfloat[Reference]{
   \includegraphics[width=0.31\textwidth, natwidth=236,natheight=236]{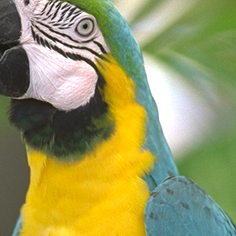}
   \label{fig:23ref}
   }
     \subfloat[Noisy ($\sigma=0.15$)]{
   \includegraphics[width=0.31\textwidth, natwidth=236,natheight=236]{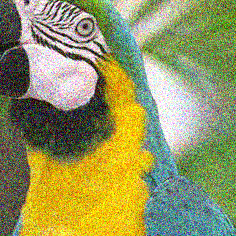}
   \label{fig:23noisy}
   }
  \subfloat[Isotropic TV]{
   \includegraphics[width=0.31\textwidth, natwidth=236,natheight=236]{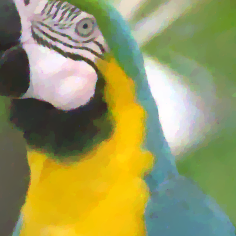}
   \label{fig:23TViso}
   } \\ 
   
   \subfloat[Vectorial TV \cite{Bre08}]{
   \includegraphics[width=0.31\textwidth, natwidth=236,natheight=236]{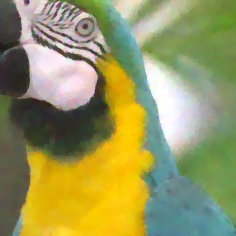}
   \label{fig:23VecTV}
   }
     \subfloat[Infimal Convolution Bregman]{
   \includegraphics[width=0.31\textwidth, natwidth=236,natheight=236]{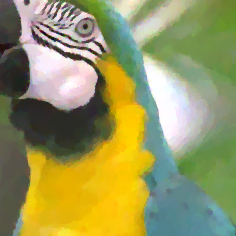}
   \label{fig:23InfConf}
   }
     \subfloat[Color Bregman TV]{
   \includegraphics[width=0.31\textwidth, natwidth=236,natheight=236]{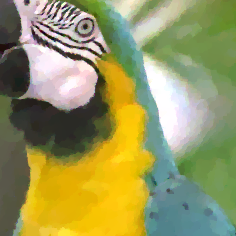}
   \label{fig:23ColBreg}
   }
    \caption[Crop of the Kodak Image no. 23]{Crop of the Kodak image no. 23. } 
    \label{fig:cropsKodak23}
\end{figure}

A visually unpleasant effect of TV denoising is the staircasing effect, which is especially visible in the smooth white-to-green transition in the right part of the image. Note that the proposed scheme of color Bregman iterations does not change structural properties of the regularizer. Therefore, we can see staircasing in the color Bregman TV reconstruction as well. Even more so, there is no loss of contrast for a given edge set which leads to the staircasing appearing even more evident. However, one big advantage of the proposed Bregman iteration is, that it immediately generalizes to any kind of regularizer. For illustration purposes we applied the color Bregman iteration for denoising the same parrot image shown in figure \ref{fig:cropsKodak23} to the total generalized variation (TGV) proposed in \cite{Bre10}. As we can see in figure \ref{fig:GTVParrot}, the eye and structures in the face of the parrot are reconstructed nicely without a loss of contrast while the staircasing we observed in the TV case is suppressed.

For further visual comparison purposes, figure~\ref{fig:mcmImages} shows four noisy images obtained from the McM data set and a comparison of the VTV and color Bregman TV results. In all images we observe a higher contrast and better preservation of the details in the color Bregman results: In the towel shown in image number 2 the weaving structure of the towel's fabric is clearly more visible in the color Bregman TV result than in the VTV result. The same holds for the red cabbage in image number 10 and for the wire net holding the apples in image number 12. Generally, we see that TV regularization is best suited for piecewise constant images. Hence, we obtain the best denoising results on MCM image 13, which has a plain white background. In comparison to the color Bregman TV image, the corresponding VTV image looks surprisingly noisy although we used some brute force optimization of the PSNR to find the best regularization parameter for each denoising method. 

\begin{figure}[H]
\begin{center}
\subfloat[TGV denoising]{
   \includegraphics[width=0.31\textwidth, natwidth=236,natheight=236]{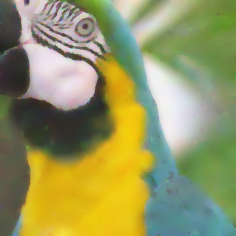}
   }
   \subfloat[Separate channel Bregman TGV]{
   \includegraphics[width=0.31\textwidth, natwidth=236,natheight=236]{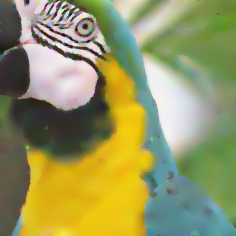}
   }
   \subfloat[Color Bregman TGV]{
   \includegraphics[width=0.31\textwidth, natwidth=236,natheight=236]{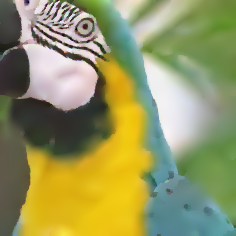}
   }
   \caption{Comparison of the results using TGV$^2$, separate channel Bregman iteration with TGV$^2$, and color Bregman iteration with TGV$^2$. For the Bregman iterative procedures we used $0.7$ and $1.12$ as the weights for the first and second order derivatives respectively. For the plain TGV regularization we used $0.14$ and $0.224$. As we can see, the color Bregman iteration with TGV as a regularization function profits from both advantages - the structural improvements of TGV over TV as well as of the improved contrast and cross-channel regularization of the color Bregman iteration: The staircasing is eliminated, the contrast of the parrots face is much better in the Bregman iterative methods than in the plain TGV denoising and the color Bregman TGV has edges which are better aligned and show less colored artifacts.}
   \label{fig:GTVParrot}
\end{center}
\vspace{-0.5cm}
\end{figure}

\subsection{Deblurring}

In order to test the behaviour of the color Bregman iteration for deblurring, we use a setup with a Gaussian blur (equal on all channels) and subsequent additive Gaussian noise, i.e.
\begin{equation}
	f(x) = \left(\int G(x-y) u_i(y) ~dy\right)_{i=1,2,3} + \sigma n(x), 
\end{equation}
for each pixel, where $n  \sim N(0,I_3)$ is a three-dimensional unit normal random variable. 
Note that since we use the same convolution operator on each channel, the formulations  \eqref{eq:primalupdate} and \eqref{eq:alternativeiteration} are equivalent. Hence we solve the first by applying a standard forward backward splitting method together with an ADMM solver for the backward problem as described in \cite{Saw11,Saw14}.

The results are illustrated again for Kodak image 23 in figure \ref{fig:deblurringKodak23}, where we used $\sigma = 0.025$ for $u$ taking values between $0$ and $1$. In the reconstruction we use the regularization parameter $\alpha=0.0025$. The standard Bregman iteration (displayed in subfigure \ref{fig:deblurringKodak23}(d)) one obtains by choosing the weights of color Bregman iteration according to $w_{i,j}=\delta_{ij}$ achieved a SNR of 23.10 dB. The color Bregman iteration with equal weights $w_{i,j}=\frac{1}3$ as shown in subfigure \ref{fig:deblurringKodak23}(e) improves the result to a SNR of 23.36 dB. Besides the improved SNR, there is a clear visual improvement e.g. in the face region. Surprisingly and different from our results in the denoising case, the choice of equal weights is not optimal in this example. An improved SNR of 23.39 dB is obtained by using a higher weight for the Bregman distance to the channel that is currently reconstructed, i.e.  $w_{i,j}=\frac{1}4(1+\delta_{ij})$. Looking at the corresponding reconstruction in subfigure \ref{fig:deblurringKodak23}(f) it is, however, difficult to find visual differences to the case of equal weights. Since the results with the best SNR are not always the visually most appealing ones, we included the results after different numbers of Bregman iterations in the supplementary material.
\begin{figure}[H]
\begin{center}
\begin{tabular}{>{\centering\arraybackslash}m{0.3\textwidth}  >{\centering\arraybackslash}m{0.3\textwidth}>{\centering\arraybackslash}m{0.3\textwidth} }
 \includegraphics[width=0.3\textwidth, natwidth=236,natheight=236]{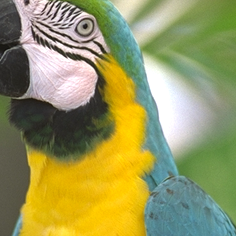} &
  \includegraphics[width=0.3\textwidth, natwidth=236,natheight=236]{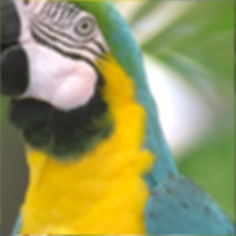} &
  \includegraphics[width=0.3\textwidth, natwidth=236,natheight=236]{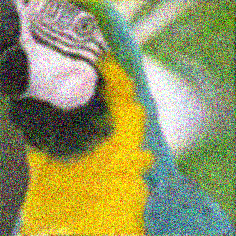} \\
\footnotesize (a) Ground truth  & \footnotesize(b) Blurred image  & \footnotesize(c) Blurred and noisy image \\
    & &  \\
   \includegraphics[width=0.3\textwidth, natwidth=236,natheight=236]{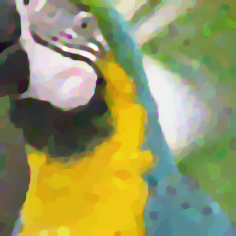} &
    \includegraphics[width=0.3\textwidth, natwidth=236,natheight=236]{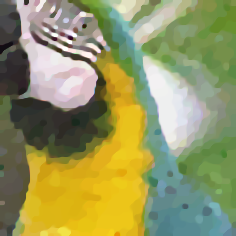} & 
     \includegraphics[width=0.3\textwidth, natwidth=236,natheight=236]{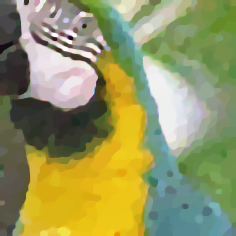} \\
     \footnotesize(d) Bregman iteration with weights \mbox{$w_{i,j}=\delta_{ij}$}&\footnotesize (e) Color Bregman TV with weights $w_{i,j}=\frac{1}3$ & \footnotesize(f) Color Bregman TV with weights $w_{i,j}=\frac{1}4(1+\delta_{ij})$
\end{tabular}
    \caption[Deblurring test on the Kodak image no. 23]{Deblurring test on the Kodak image no. 23. } 
    \label{fig:deblurringKodak23}
    
\end{center}
\end{figure}

\subsection{Inpainting}

For illustration purposes and as a proof of concept that our general idea/the framework of color Bregman iteration is applicable to a wide range of image reconstruction problems, let us investigate its behavior in the case of color image inpainting. 
\begin{figure}[h]
	\centering
  \subfloat[Ground truth]{
   \includegraphics[width=0.31\textwidth, natwidth=256,natheight=100]{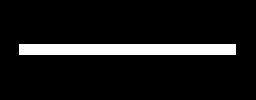}
   }
   \subfloat[Red inpainting domain]{
   \includegraphics[width=0.31\textwidth, natwidth=256,natheight=100]{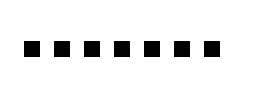}
   }
   \subfloat[Green inpainting domain]{
   \includegraphics[width=0.31\textwidth, natwidth=256,natheight=100]{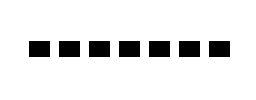}
   }\\
	\subfloat[Blue inpainting domain]{
   \includegraphics[width=0.31\textwidth, natwidth=256,natheight=100]{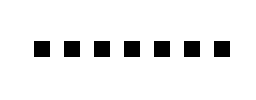}
   }  
   \subfloat[Channel-by-channel TV]{
   \includegraphics[width=0.31\textwidth, natwidth=256,natheight=100]{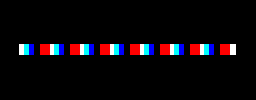}
   }
   \subfloat[Vectorial TV]{
   \includegraphics[width=0.31\textwidth, natwidth=256,natheight=100]{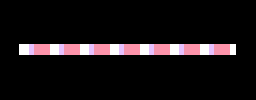}
   }  \\
     \subfloat[Color Bregman TV with \eqref{eq:alternativeiteration}]{
   \includegraphics[width=0.31\textwidth, natwidth=256,natheight=100]{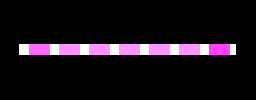}
   }     
  \subfloat[Color Bregman TV with \eqref{eq:primalupdate}]{
   \includegraphics[width=0.31\textwidth, natwidth=236,natheight=100]{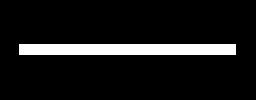}
   } 
    \caption{Inpainting results of color Bregman TV for a toy problem. Closing all gaps works best when applying the color Bregman iteration.} 
    \label{fig:colorBregInpainting}
    \vspace{-0.3cm}
\end{figure}

There are two possible versions of TV inpainting - one where a hard equality constraint is imposed on the data outside the inpainting domain and one where the given measurements are denoised rather than matched exactly. Note that channel-by-channel Bregman iteration starts with a solution of the latter problem and iteratively converges to a solution of the first formulation. For color Bregman iteration this is not necessarily the case anymore. We will focus on the denoising type of formulation in this section.

Let us briefly mention that Bregman distances have previously been used for image inpainting tasks (cf. \cite{Bal03,moeller12}), however, not in the iterative procedure proposed in this paper. Generally, other types of color correlation have been proposed in literature, for instance the interpolation of chrominance values or interpolation in different color spaces (cf.  \cite{ZhangWu11, Laroche94}) or changes of color space, e.g. \cite{Con12}. In our framework a change of color space of course clearly affects the assumption of edges going into the same direction such that one might have to combine them with the infimal convolution version of color Bregman iteration.

In this section we simply take a small toy problem to illustrate the effect of straight forward color Bregman iteration. It is interesting to see that in case the inpainting domains are different for different color channels, the iterative schemes \eqref{eq:primalupdate} and \eqref{eq:alternativeiteration} will not be equivalent any more. As an example, let us take a classical inpainting problem, where the channel-by-channel total variation is well known to fail: As proven by Meyer in \cite{meyer}, the total variation will connect two bars with an unknown region in the middle if and only if the length of the region to be inpainted is as most as large as the width of the bars. However, if we assume that the unknown regions of the different color channels differ, one can use the information (assuming a common edge set) to improve the inpainting results. 
Figure \ref{fig:colorBregInpainting} shows a toy example with the ground truth, three inpainting domains for the three color channels red, green, and blue, where black regions are unknown, and the results of channel-by-channel TV, vectorial TV, and color Bregman TV using formulation \eqref{eq:primalupdate} and \eqref{eq:alternativeiteration}. Note that the unknown gaps in the inpainting domains are not only too large for the separate channel to connect them, they are also overlapping such that there are some parts where seemingly no information is available. Numerically, we again use an ADMM approach (with an additional splitting) and perform a Bregman iteration after every 50 inner iterations. 

We can see that - as expected - the channel-by-channel TV approach is basically unable to close any gaps. The vectorial TV approach is able to close some gaps and additionally leads to smaller intensity jumps in the remaining gaps. However, structurally vectorial TV cannot reconstruct the true solution. It is very interesting to see that the results of color Bregman TV using formulation \eqref{eq:primalupdate} and \eqref{eq:alternativeiteration} actually differ. While the formulation based on \eqref{eq:alternativeiteration} has problems to close all the gaps (similar to the vectorial TV), the formulation \eqref{eq:primalupdate} led to a perfect reconstruction with all gaps closed. This indicates that our original motivation of using the Bregman distance between different color channels as a regularization might be preferable in cases where \eqref{eq:primalupdate} and \eqref{eq:alternativeiteration} are not equivalent.

\section{Conclusions and Further Extensions}
\label{sec:conclusions}
In this paper we proposed an iterative procedure for the reconstruction of multichannel data using Bregman distances, which encourages a common subgradient of all channels. In the case of TV regularization it leads to the image channels sharing a common edge set as well as a common edge direction. The latter effect can be avoided by using the infimal convolution of Bregman distances. We proved the convergence of the new iterative procedure, analyzed stationary points and demonstrated its effectiveness with numerical experiments on color image denoising, inpainting and deblurring.  

Since the idea of coupling subgradients with a weight matrix $W$ in a Bregman iteration is a completely novel idea, there are a number of open questions which go beyond the scope of this paper and will be subject to future research. Nevertheless, we would like to point out which open questions and further ideas we find particularly interesting.  
\begin{itemize}
\item First of all, the infimal convolution iteration is lacking a rigorous analysis such as a convergence proof, convergence speed, or stationary points of the iteration.  

\item Secondly, the above color Bregman iteration needs to be tested in detail in different settings (e.g. denoising, demosaicking, inpainting, or deconvolution) and compared to state of the art methods not related to Bregman iteration, such as variants of non-local means or dictionary learning approaches. As we have pointed out in section \ref{sec:primaldual} there are two different types of color Bregman iterations in the cases where $K^*$ and $W$ do not commute. While in the prominent case of denoising the commutation is obvious since $K=I$, an operator acting differently on different color channels, e.g. the operator arising in demosaicking, will not have this property. It will be interesting to investigate the differences between the two different Bregman iterations in this case.  

\item Thirdly, the application of our approach to multimodal imaging data will be interesting. Notice that the proposed Bregman iteration only assumes a relation between the edge sets of the image channels and not between the image values such that joint reconstruction of very different types of images is possible with each channel profiting from the information in the other channels. 
   
\item  Finally, note that we formulated the above Bregman iteration as 
$ q^{k+1 } = Wq^k + \lambda (f- Ku^{k+1}), $
where the most important parts of the convergence analysis are based on $W$ and $I-W$ being positive semi-definite and $\|W\| \leq 1 $. At least in the discrete setting, one could consider to not only couple different image channels, but also couple patches within each channel in a non-local fashion. Similar to non-local means (or non-local TV, cf. \cite{Gil09, Saw11}) one could determine similarity weights between image patches and use these weights in the operator/matrix $W$ to couple the parts of the subgradient of different patches. Notice that this technique could find significantly more similar patches than non-local means since not the patches themselves, but only their edge sets have to be similar: A red patch with a jump could help to denoise a green patch with a jump. This idea will also be subject to future research. 
\end{itemize}
\section{Acknowledgements}
The authors would like to thank Martin Benning for providing efficient MATLAB code for the inversion of the operator $I - \gamma \Delta$. This work was partly supported by the German Ministery for Science and Education (BMBF) via the project HYPERMATH. MB further acknowledges support from the German Science Foundation (DFG) via projects  BU 2327/6-1 and BU 2327/8-1.

\nocite{De85}
\bibliographystyle{siam}
\bibliography{references}

\end{document}